\newcommand{\Riem}{\mathrm{R}}
\newcommand{\AAA}{\Lambda}
\renewcommand{\Fslash}{\slashed{F}}
\newcommand{\Sob}{\mathrm{Sob}}
\renewcommand{\Si}{M}
\newcommand{\prSi}{{\pr\Si}}
\renewcommand{\DDD}{{\mathbb{B}}}
\newcommand{\KG}{\mathrm{K}}
\title{Stability of the Euclidean $3$-ball under $L^2$-curvature pinching}
\author{Olivier Graf\,\footnote{Univ. Grenoble Alpes, CNRS, IF, 38000 Grenoble, France. E-mail: \texttt{olivier.graf@univ-grenoble-alpes.fr}.}}
\begin{document}
\maketitle
\begin{abstract}
  In this article, we consider compact Riemannian $3$-manifolds with boundary. We prove that if the $L^2$-norm of the curvature is small and if the $H^{1/2}$-norm of the difference of the fundamental forms of the boundary is small, then the manifold is diffeomorphic to the Euclidean ball. Moreover, we obtain that the manifold and the ball are metrically close (uniformly and in $H^2$-norm), with a quantitative, optimal bound. The required smallness assumption only depends on the volumes of the manifold and its boundary and on a trace and Sobolev constant of the manifold. The proof only relies on elementary computations based on the Bochner formula for harmonic functions and tensors, and on the $2$-spheres effective uniformisation result of~\cite{Kla.Sze22}.
\end{abstract}

\section{Introduction}
In this article, we consider smooth oriented connected and compact $3$-dimensional Riemannian manifold $(\Si,g)$ with boundary $\pr\Si$. We note $\gd$ the induced Riemannian metric on $\pr\Si$. We note $\th$ the second fundamental form of $\prSi$, defined by 
\begin{align*}
  \th(X,Y) & := g(\nab_XN,Y),  
\end{align*}
for all $X,Y\in T\pr\Si$, where $N$ denotes the outgoing unit normal to $\pr\Si$ and $\nab$ is the Levi-Civita connection associated to $(\Si,g)$. We note $\Nd$ the projection of $\nab$ on $\prSi$, which, for $\prSi$-tangent tensors, coincides with the Levi-Civita connection of $(\prSi,\gd)$. For all smooth $\Si$-tangent tensors $F$, we define the trace norm
\begin{align}\label{eq:defH12}
  \norm{F}_{H^{1/2}(\pr\Si)} & := \inf_{\substack{\text{$\widetilde{F}$ smooth} \\ \text{extension of $F$ in $\Si$}}}\Vert\widetilde{F}\Vert_{H^1(\Si)},
\end{align}
and, for all $n\in\mathbb{N}$,
\begin{align*}
  \norm{F}_{H^{n+1/2}(\prSi)} & := \sum_{m=0}^n\norm{\Nd^mF}_{H^{1/2}(\prSi)}.
\end{align*}
We define $c_{\Sob}$ to be the Sobolev constant given by\footnote{Using Rellich-Kondrachov theorem, $c_{\Sob}$ is finite for any smooth compact manifold. See~\cite{Eva98}.}
\begin{align}\label{eq:defsobconst}
  \begin{aligned}
    c_{\Sob}^{-1} & := \inf_{f \in C^\infty(\Si)} \frac{\norm{\nab f}_{L^{2}(\Si)}}{\Vert f - \bar f \Vert_{L^6(\Si)}},
  \end{aligned}
\end{align}
where $\bar f:=\frac{1}{|\prSi|}\int_\prSi f$. Finally, we denote by $\Riem$ and $\RRRic$, the Riemann and Ricci curvature of $(\Si,g)$.\footnote{For $3$-manifolds, $\Riem$ can be expressed in terms of $\RRRic$. Nevertheless, we make the distinction between the two tensors since there are a few occurrences in the paper where $\Riem$ appears and could not be replaced by $\RRRic$ in dimension greater than 3.}\\

The following theorem is the main result of the paper.
\begin{theorem}\label{thm:main}
  Let $\AAA>0$. There exists $\varep^0>0$, depending only on $\AAA$, such that for all $0<\varep<\varep^0$ the following holds. Any smooth oriented connected and compact 3-dimensional Riemannian manifold $(\Si,g)$ with non-empty boundary, such that 
  \begin{align}\label{est:constants}
     |\Si|,~\frac1{|\prSi|},~c_\Sob,~\norm{N}_{H^{1/2}(\prSi)} \leq \AAA,
  \end{align}
  and
  \begin{subequations}
    \label{est:L2RicTh}
    \begin{align}
      \norm{\Riem}_{L^2(\Si)} & \leq \varep,\\
      \norm{\th-\gd}_{H^{1/2}(\pr\Si)} & \leq \varep,
    \end{align}
  \end{subequations}
  satisfies the following properties:
  \begin{itemize}
  \item $\Si$ is diffeomorphic to the Euclidean $3$-ball $\DDD^3$ \emph{via} a global harmonic coordinate map $\Phi$. In particular, $\pr\Si$ is diffeomorphic to the $2$-sphere $\SSS^2$.
  \item In the coordinates given by $\Phi$, we have
    \begin{align}\label{est:gijdeijthm}
       \norm{g_{ij}-\de_{ij}}_{H^2(\Si)} + \norm{g_{ij}-\de_{ij}}_{L^\infty(\Si)} & \les_\AAA \varep,
    \end{align}
    for all $1\leq i,j \leq 3$.
  \item Moreover, we have
    \begin{align}\label{est:highergijdeijthm}
      \norm{g_{ij}-\de_{ij}}_{H^{n+2}(\Si)} & \les_{\AAA,n} \norm{\Riem}_{H^n(\Si)} + \norm{\th-\gd}_{H^{n+1/2}(\pr\Si)},
    \end{align}
    for all $1\leq i,j \leq 3$ and all $n\geq 0$.
  \end{itemize}
\end{theorem}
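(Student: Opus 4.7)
The strategy is to construct global harmonic coordinates $\Phi=(\Phi^1,\Phi^2,\Phi^3)\colon\Si\to\mathbb{R}^3$ adapted to the boundary, and to exploit the elliptic equation satisfied by the metric components in such coordinates. First I would transfer the bulk smallness of $\Riem$ and the boundary smallness of $\th-\gd$ to the intrinsic geometry of $\prSi$: the Gauss equation on $\prSi$ combined with the assumptions implies that the Gauss curvature of $(\prSi,\gd)$ is $L^1$-close to $1$, and the effective $2$-sphere uniformisation of~\cite{Kla.Sze22} then produces a near-isometry $\varphi\colon(\prSi,\gd)\to\SSS^2$, the latter endowed with its standard round metric. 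I define the coordinates coordinate-wise by harmonic extension: $\Delta_g\Phi^i=0$ on $\Si$ with Dirichlet data $\Phi^i|_{\prSi}=(\iota\circ\varphi)^i$, where $\iota\colon\SSS^2\hookrightarrow\mathbb{R}^3$ is the canonical embedding.

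Next I would obtain $H^2$- and in fact $H^3$-control of $\Phi$ through Bochner-type computations. For each harmonic $\Phi^i$,
\begin{align*}
  \tfrac12\Delta|\nab\Phi^i|^2 = |\nab^2\Phi^i|^2 + \RRRic(\nab\Phi^i,\nab\Phi^i).
\end{align*}
Integration by parts over $\Si$ produces a boundary integral whose principal term, upon using $\th\approx\gd$, is a coercive quadratic form in $\nab\Phi^i|_{\prSi}$, up to an error controlled by $\AAA$ times $\|\th-\gd\|_{H^{1/2}(\prSi)}$. Combined with the $L^2$-smallness of $\RRRic$ (equivalent in $3$D to that of $\Riem$), this yields $\|\nab^2\Phi^i\|_{L^2(\Si)}\les_\AAA\varep$; iterating the same identity on the one-form $\nab\Phi^i$ (``Bochner for tensors'') upgrades this to an $H^3$-bound. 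Sobolev embedding $H^2\hookrightarrow C^0$ in dimension $3$ then shows that the Jacobian of $\Phi$ is pointwise close to a constant orthonormal frame, so $\Phi$ is a local diffeomorphism; a topological degree argument, using that $\varphi$ has degree one on $\prSi$, promotes it to a global diffeomorphism $\Si\to\DDD^3$.

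Finally, in the coordinates $\Phi$, the metric components satisfy the elliptic system
\begin{align*}
  g^{ab}\partial_a\partial_b g_{ij} = -2\,\RRRic_{ij} + Q_{ij}(g,\partial g),
\end{align*}
with $Q$ quadratic in $\partial g$. The tangential part of $g_{ij}|_{\prSi}$ is controlled by the near-round induced metric $\gd$, while the normal-normal and normal-tangent components are controlled through the identity $\th_{AB}=\tfrac12 N(g_{AB})+\ldots$ together with $\|\th-\gd\|_{H^{1/2}(\prSi)}\le\varep$ and $\|N\|_{H^{1/2}(\prSi)}\le\AAA$. Standard elliptic regularity then yields~\eqref{est:gijdeijthm}, and a direct bootstrap of the same equation, with analogous higher-order boundary estimates obtained from $\Nd^k(\th-\gd)$, produces~\eqref{est:highergijdeijthm}.

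The main obstacle I foresee is the circularity of the estimates: the Bochner integration by parts, the elliptic bounds for $g_{ij}$, and the passage from local to global diffeomorphism all require some a priori geometric control of $\Si$ that is precisely what the theorem aims to prove. Carefully isolating those estimates that depend only on the a priori constants $\AAA$ of~\eqref{est:constants} and closing the bootstrap by a continuity/smallness argument in $\varep$ will be delicate. A secondary difficulty is transferring the bulk smallness $\|\Riem\|_{L^2(\Si)}\le\varep$ to the boundary with enough quantitative strength to feed into~\cite{Kla.Sze22}, which will likely require an intermediate trace estimate for $\Riem$ on $\prSi$ obtained by a careful Bochner/Pohozaev-type computation.
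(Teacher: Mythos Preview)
Your outline is essentially the paper's strategy: uniformise $\prSi$ via \cite{Kla.Sze22}, harmonically extend the Cartesian boundary functions to obtain candidate coordinates $x^i$, control $\nab^2 x^i$ in $L^2$ by a Bochner identity, upgrade to $H^3$ and hence $C^0$ control of the Jacobian, and conclude by a local-to-global diffeomorphism argument. The paper's global step uses the maximum principle plus a covering argument rather than degree theory, but this is a minor variation.

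There are, however, two places where the paper's route differs from yours in a way that matters for the circularity you correctly flag. First, the paper does \emph{not} close the argument through the quasilinear system $g^{ab}\pr_a\pr_b g_{ij}=-2\RRRic_{ij}+Q_{ij}(g,\pr g)$; indeed it explicitly warns that elliptic regularity for this system presupposes control of the coefficients $g^{ab}$, which is exactly what is being proved. Instead, the paper works with the tensor $\BBB:=\sum_i \nab x^i\otimes\nab x^i - g$, obtains $\norm{\BBB}_{H^2}+\norm{\BBB}_{L^\infty}\les_\AAA\varep$ directly from the Bochner and higher-order estimates for $x^i$, and then reads off $g_{ij}-\de_{ij}=-\BBB_{ij}$ pointwise. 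This sidesteps any need for elliptic estimates with rough coefficients. Second, to break the circularity in the \emph{functional} estimates (Sobolev, trace, product, elliptic on $\prSi$), the paper introduces an auxiliary object you do not mention: the \emph{harmonic radius vectorfield} $X$, defined as the harmonic extension of $N$ into $\Si$. A Bochner computation for $X$ gives $\norm{\nab X-g}_{L^6(\Si)}\les_\AAA\varep$ from the four a priori constants alone, and this single estimate is then the engine for all subsequent functional inequalities and for the $L^2$-bound on $\BBB$ (via an auxiliary vectorfield $Z=\sum_i x^i\nab x^i$ compared to $X$).

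A smaller but genuine point: saying the Gauss curvature is ``$L^1$-close to $1$'' is not what feeds into \cite{Kla.Sze22}, which requires an $L^\infty$ bound. The paper first obtains $\KG-1$ small only in $H^{-1/2}(\prSi)$ (via a Bianchi-based trace estimate $\norm{\mathrm{G}_{NN}}_{H^{-1/2}(\prSi)}\les_\AAA\norm{\mathrm{G}}_{L^2(\Si)}$, which is the trace mechanism you anticipate), then regularises by solving $\Ld u=\KG-\overline{\KG}$ on $\prSi$ so that the conformally modified metric $e^{2u}\gd$ has Gauss curvature uniformly close to $1$; \cite{Kla.Sze22} is applied to this modified metric and the resulting conformal factor is corrected back.
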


\paragraph{Notations.} We use Einstein summation convention and we write $i,j,k,l,\ldots$ the indices used for contractions with $g$ and $a,b,c,\ldots$ the indices used for contractions with $\gd$. We write $A\les_{D}B$ as a shorthand notation for ``there exists a constant $C$ depending only on $D$, such that $A\leq CB$''. 

\paragraph{Remarks on Theorem~\ref{thm:main}.}
\begin{enumerate}
\item In particular, we obtain from setting $\varep=0$ in Theorem~\ref{thm:main} the following \emph{rigidity} result: flat $3$-dimensional manifolds with boundary, with coinciding first and second fundamental forms, are \emph{isometric} to the Euclidean unit $3$-ball.
\item We emphasise that no topological assumptions are made on the manifold $\Si$, apart from the fact that the boundary must be non-empty. Theorem~\ref{thm:main} is of course false in that case, a counter-example being given by the flat torus $\mathbb{T}^3$.
\item The quantitative estimates~\eqref{est:gijdeijthm} are optimal in the sense that, for $\varep$ sufficiently small,~\eqref{est:gijdeijthm} implies $\norm{\Riem}_{L^2(\Si)}\les_\AAA\varep$ and $\norm{\th-\gd}_{H^{1/2}(\prSi)}\les_\AAA\varep$ as assumed in~\eqref{est:L2RicTh}. 
\end{enumerate}

\paragraph{Elliptic PDE heuristic.} Let us first discuss why Theorem~\ref{thm:main} is natural from an elliptic PDE viewpoint. For the classical Poisson equation
\begin{align}\label{eq:Poissonintro}
  \Delta u = f,
\end{align}
multiplication by $u$ and integration by part gives the \emph{energy identity}
\begin{align}\label{eq:energyintro}
  \int_\Si |\nab u|^2 = -\int_\Si fu + \int_\prSi u N(u).
\end{align}
Commuting~\eqref{eq:Poissonintro} by $\nab$, we get
\begin{align}\label{eq:DeltaRicbis}
  \Delta \nab u & = \RRRic\cdot\nab u + \nab f,
\end{align}
which, contracted with $\nab u$ and integrated on $\Si$, using Stokes formula, gives the (integrated) \emph{Bochner formula}
\begin{align}\label{eq:Bochnerintro}
  \begin{aligned}
    & \int_\Si |\nab^2u|^2 + \int_\Si\RRRic \cdot \nab u \cdot \nab u \\
    & \quad \quad + \int_\prSi \th^{ab}\Nd_au\Nd_bu + \int_\prSi \trth (N(u))^2 = \int_\Si |f|^2 + 2\int_\prSi \Nd^au \Nd_a(N(u)).
  \end{aligned}
\end{align}

In the Euclidean case $\RRRic=0$ and $\th=\gd$, combining the energy~\eqref{eq:energyintro} and Bochner~\eqref{eq:Bochnerintro} identities with a Sobolev or Poincar\'e estimate, and classical absorption arguments (see Remark~\ref{rem:absorption}), gives
\begin{align*}
  \norm{u-\bar u}_{H^2(\Si)}^2 & \les \norm{f}^2_{L^2(\Si)} + \int_\prSi \Nd u \cdot \Nd (N(u)) \les \norm{f}^2_{L^2(\Si)} + \norm{\Nd u}_{H^{1/2}(\prSi)} \norm{N(u)}_{H^{1/2}(\prSi)}.  
\end{align*}
Using a trace estimate and an absorption argument, this gives the following \emph{Dirichlet} or \emph{Neumann} elliptic estimate
\begin{align}\label{est:H2demointro}
  \norm{u-\bar u}_{H^2(\Si)} \les \norm{f}_{L^2(\Si)} + \norm{\Nd u}_{H^{1/2}(\prSi)} && \text{or} && \norm{u-\bar u}_{H^2(\Si)} \les \norm{f}_{L^2(\Si)} + \norm{N(u)}_{H^{1/2}(\prSi)}.
\end{align}

On the other hand, \emph{if there exists coordinates $(x^1,x^2,x^3)$ on $\Si$ which are harmonic}, \emph{i.e.} such that $\Delta (x^i) =0$, the metric components in these coordinates satisfy the quasilinear coupled system of Poisson equations
\begin{align}\label{eq:quasilinearPoisson}
  \Delta(g_{ij}) & = -2\RRRic(g)_{ij} + \mathcal{N}_{ij}(g,\pr g),
\end{align}
for all $1 \leq i,j \leq 3$, and where $\mathcal{N}_{ij}(g,\pr g)$ is a non-linearity in $g, \pr g$. Hence, in view of the elliptic estimates~\eqref{est:H2demointro} reviewed above, since $\th \sim N(g)$, it is reasonable to hope for an estimate of the type
\begin{align}\label{est:H2gijdemo}
  \norm{g_{ij}-\de_{ij}}_{H^2(\Si)} & \les \norm{\RRRic}_{L^2(\Si)} + \norm{\th-\gd}_{H^{1/2}(\prSi)}.
\end{align}
In fact, establishing such an estimate is precisely what Theorem~\ref{thm:main} formalises. However, the above heuristic has two caveats: first,~\eqref{eq:quasilinearPoisson} only holds if we know that we already have global harmonic \emph{coordinates} on $\Si$, second, it is not clear how to obtain~\eqref{est:H2gijdemo} from~\eqref{eq:quasilinearPoisson}. In fact, instead of using the elliptic equations~\eqref{eq:quasilinearPoisson} for metric components, we will directly prove elliptic estimates for the harmonic functions $x^i$, using the energy and Bochner formulas~\eqref{eq:energyintro} and~\eqref{eq:Bochnerintro}. See discussions below.

\begin{remark}\label{rem:absorption}
  In this paper, we repeatedly use variations of the following two types of ``absorption arguments'':
  \begin{itemize}
  \item If $A \leq B + \varep C A$, then for $\varep$ sufficiently small depending on $C$ -- here $\varep \leq 1/(2C)$ --, the last term of the RHS can be absorbed on the LHS and one has $A\les B$.
  \item If $A^2 \leq C AB$, then, using that $AB \leq \frac{1}{2C}A^2 + \frac{C}{2}B^2$, absorption by the LHS as above gives $A^2\les_C B^2$. 
  \end{itemize}
\end{remark}

\paragraph{From elliptic estimates to geometric conclusions: an example.}
Let us illustrate on a simple example how geometric conclusions can follow from (elementary) elliptic estimates, such as the ones reviewed in the previous paragraph. The theorem below is a special case of Corollary~\ref{cor:prSiconnected} which is proved in the bulk of this paper.
\begin{theorem*}
  Assume that $\Si=\Omega$ is a connected open subset of $\mathbb{R}^3$ with compact closure and smooth boundary. Assume that $\pr\Omega$ is mean convex, \emph{i.e.} that $\trth>0$. Then $\pr\Omega$ has only one connected component.
\end{theorem*}
\begin{proof}
  Assume that $\pr\Omega$ has at least two connected components $X_1$ and $X_2$. Define $u$ the harmonic function in $\Omega$ such that $u=1$ on $X_1$, $u=-1$ on $X_2$ and $u=0$ on $\pr\Om\setminus X_1\cup X_2$. Using that $\Delta u =0$ and that $u$ is constant on all connected components of $\pr\Om$, the Bochner integral formula~\eqref{eq:Bochnerintro} rewrites
  \begin{align}\label{eq:demoexampleBochner}
   \int_\Om |\nab^2u|^2 + \int_{X_1\cup X_2}\trth (N(u))^2 = 0.
  \end{align}
  Using that $\trth>0$, we deduce that $\nab^2u=0$ in $\Si$ and $N(u)=0$ on $X_1$ and $X_2$. The energy identity~\eqref{eq:energyintro} then yields
  \begin{align*}
    \int_{\Omega}|\nab u|^2 & = \int_{\pr\Om}uN(u) = 0.
  \end{align*}
  Hence $u$ is constant on $\Omega$ and on $\pr\Omega$ which is a contradiction.
\end{proof}

\begin{remark}
  The above proof also highlights the crucial role played by the convexity of the boundary. Namely, in all the Bochner-type formulas of the paper, a term analogous to the second term in~\eqref{eq:demoexampleBochner} will appear, and this term will have a good sign provided that $\trth>0$ in some sense. Note that this holds (in a weak $H^{1/2}$-sense) under the assumptions of Theorem~\ref{thm:main}, since~\eqref{est:L2RicTh} implies that $\trth \simeq 2$.
\end{remark}

\paragraph{Outline of the proof of Theorem~\ref{thm:main}.} The first step is to obtain a series of functional estimates relying only on the bound for the four constants~\eqref{est:constants} and on the curvature and second fundamental form smallness assumption~\eqref{est:L2RicTh}. This is achieved in Section~\ref{sec:func}. To that end we use a ``radius vectorfield'' $X$, defined as the harmonic extension in $\Si$ of the normal to the boundary $N$. The first section (Section~\ref{sec:harmonicradius}) is in fact dedicated to proving estimates for $X$. The key tool there (as in the rest of the paper) is the Bochner formula for harmonic functions and tensors. As a first, notable, byproduct of these preliminaries, we directly obtain that $\prSi$ has only one connected component (Corollary~\ref{cor:prSiconnected}).\\

Using Gauss equation and a trace estimate for the curvature on $\prSi$, we further obtain in Section~\ref{sec:confest} that $\prSi$ is nearly round (in a weak $H^{-1/2}(\prSi)$-sense). Hence, adapting the effective uniformisation result of~\cite{Kla.Sze22}, there exists a conformal isomorphism $\Phi$ between $\prSi$ and the Euclidean sphere $\SSS^2$ which is nearly an isometry. Extending $\Phi$ as an harmonic function on $\Si$ (Section~\ref{sec:prelim}), we have a natural candidate for the global diffeomorphism of Theorem~\ref{thm:main}.\\

To show that this indeed defines a local and global diffeomorphism, the crux is an estimate of the Hessian of $\Phi$ in $L^2(\Si)$ which is, again, based on Bochner formula (Section~\ref{sec:enerBoch}). Once this estimate is closed, controlling higher order derivatives is routine (but technical) for elliptic equations (Section~\ref{sec:nab3xiSi}).\\

The fact that $\Phi$ is a local diffeomorphism follows easily from these estimates (Section~\ref{sec:globdiffeo}). From an application of the maximum principle and simple topological arguments, we also deduce that $\Phi$ is a global diffeomorphism onto the Euclidean ball $\DDD^3$. In Section~\ref{sec:finalproof} we conclude the proof of Theorem~\ref{thm:main}. 

\paragraph{Related works.} We first highlight two ways by which a result close to Theorem~\ref{thm:main} could be obtained:
\begin{itemize}
\item Using a geometric flow (see for example~\cite[Corollary 1.3]{Hui86} where a geometric conclusion is obtained by evolving the mean curvature flow starting from the boundary of the manifold).
\item Using a Cheeger-Gromov compactness argument. See~\cite{Czi19,Czi.Gra22} for such an argument performed under assumptions similar to Theorem~\ref{thm:main} (and see~\cite{Czi19} for further references).
\end{itemize}
In both cases, we believe that the direct proof of the present paper, which relies only on harmonic functions (and tensors), computations, and elementary estimates, is simpler and more satisfactory. In fact, using harmonic functions and (integral) elliptic estimates has proved to be a successful and elegant way of obtaining geometric results. See for example the rigidity and stability results of~\cite{Lic58,Oba62} and~\cite{Pet99b} for the Euclidean $n$-sphere (see also~\cite{Aub03}), which rely on (refinements of) the Bochner formula. See also the recent rigidity result of~\cite{Bra.Kaz.Khu.Ste22} and stability result of~\cite{Don.Son24} in the context of the positive mass theorem.\\

We also point out the fact that the proofs in~\cite{Czi19,Czi.Gra22} rely on contradiction arguments which do not give quantitative estimates, \emph{i.e.} $\varep$ in~\eqref{est:gijdeijthm} should be replaced by an implicit function $\delta(\varep)$ of $\varep$. It could be possible to upgrade from $\delta(\varep)$ to $\varep$ as is done in~\cite{Kla.Sze22} to obtain an effective uniformisation result for the $2$-sphere, although, again, we believe that the present, direct proof is simpler.\\

Finally, we mention that some of the ideas of the present paper are already contained in the appendix of~\cite{Gra20a} (see Appendix A and Theorem 4.4 in that paper). This includes in particular the definition of the harmonic coordinates of Section~\ref{sec:prelim} and associated computations (Proposition~\ref{prop:refinedBochner}), which we rewrote (and simplified) for completeness. On top of many simplifications, we highlight the following main novelties of the present article.
\begin{itemize}
\item In~\cite{Gra20a}, the constant $\varep$ had to be small with respect to many functional constants (in fact, each required functional estimate in~\cite{Gra20a} was assumed to hold with an arbitrary constant). This could be circumvented by using a (strong!) ``weak regularity'' assumption on $\Si$ and $\prSi$ as the ones of~\cite{Sha14}. In the present paper, we drastically reduce the number of constants, requiring only a bound on the volumes of $\Si$ and $\prSi$ and on a trace and Sobolev constant of $\Si$, see~\eqref{est:constants}. This is done thanks to a new functional framework (see for example the definition~\eqref{eq:defH12} of $H^{1/2}(\prSi)$) which is adapted to the situation and allows us to prove each functional estimate needed in the paper, using only the assumption~\eqref{est:constants}.
\item In the present paper, we remove the assumption made in~\cite{Gra20a} that $\Si$ has to be diffeomorphic to $\DDD^3$ and we obtain it \emph{as a conclusion}.
\end{itemize}

\paragraph{Applications to General Relativity.} For $4$-dimensional Lorentzian manifolds $(\MM,\g)$ satisfying the \emph{Einstein equations} $\RRRic(\g)=0$, the second Bianchi identities yield that the spacetime divergence of the Riemann curvature tensor $\Riem(\g)$ vanishes. Hence, the Riemann curvature $\Riem(\g)$ satisfies wave-type energy estimates, in which the $L^2$-norm of $\Riem(\g)$ on spacelike hypersurfaces -- which are $3$-dimensional Riemannian manifolds -- is the natural energy quantity. See~\cite{Chr.Kla93} and the celebrated bounded $L^2$-curvature theorem~\cite{Kla.Rod.Sze15} for further discussions.\\

In this context, the estimates~\eqref{est:gijdeijthm} and~\eqref{est:highergijdeijthm} of Theorem~\ref{thm:main} can be applied to deduce bounds at the metric level from $L^2$-bounds on $\Riem(\g)$. For that, we need that the result of Theorem~\ref{thm:main} is quantitative -- \emph{i.e.} that the RHS of~\eqref{est:gijdeijthm} is linear in $\varep$ -- since in applications, $\varep$ can be a precise (decaying) function of time. We refer to~\cite{Gra20a} for such an application, which was in fact the main motivation for Theorem~\ref{thm:main} (and Theorem 4.4 in~\cite{Gra20a}) and for further discussions. 

\paragraph{Acknowledgements.} This project started after my collaboration with Stefan Czimek whom I thank for interesting discussions. It was partially included in my PhD thesis and I thank my former advisor Jérémie Szeftel for advice and interesting discussions. I also thank Grigorios Fournodavlos, Arick Shao and Jacques Smulevici for stimulating discussions and comments. Finally, I thank Baptiste Devyver for insights on Sobolev estimates on manifolds and other discussions around this project.

\section{The harmonic radius vectorfield $X$}\label{sec:harmonicradius}
\begin{definition}\label{def:X}
  We define the \emph{harmonic radius vectorfield} $X$ to be the smooth vectorfield on $\Si$ such that
  \begin{align}\label{eq:DirichletX}
    \Delta X & = 0 ~ \text{on $\Si$} & X & = N ~ \text{on $\pr\Si$.}
  \end{align}
\end{definition}
The goal of this section is to obtain uniform and $H^2$ estimates for $X$ (see Lemma~\ref{lem:extensionofN} and Proposition~\ref{prop:extensionofNextra}). \emph{En passant}, we prove a general Sobolev estimate (Lemma~\ref{lem:Sobolevenpassant}) which is needed in the proof of Proposition~\ref{prop:extensionofNextra}. The estimates obtained in this section are crucial to prove general functional estimates in Section~\ref{sec:func}. The estimates of Proposition~\ref{prop:extensionofNextra} are also re-used later in the proof of Lemma~\ref{lem:estL2BBB}.

\begin{lemma}\label{lem:extensionofN}
  We have
  \begin{align}
    |X| & \leq 1, \label{est:maxppX}
  \end{align}
  uniformly on $\Si$, and
  \begin{align}
    \norm{\nab X}_{L^2(\Si)} & \leq 2\norm{N}_{H^{1/2}(\pr\Si)}. \label{est:H1X}
  \end{align}
\end{lemma}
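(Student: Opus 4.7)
}

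For the pointwise bound~\eqref{est:maxppX}, the plan is to apply the weak maximum principle to the function $|X|^2 = g(X,X)$. Since $\Delta X = 0$ (with $\Delta$ the rough/Bochner Laplacian on vector fields, compatible with $g$), the standard identity
\begin{align*}
  \Delta |X|^2 & = 2 g(\Delta X, X) + 2 |\nabla X|^2 = 2|\nabla X|^2 \geq 0
\end{align*}
shows that $|X|^2$ is subharmonic on $\Si$. By the maximum principle on the compact manifold $\Si$, it attains its maximum on $\pr\Si$, where $|X|^2 = |N|^2 = 1$ since $N$ is the unit normal. Hence $|X|^2 \leq 1$ on $\Si$, which gives~\eqref{est:maxppX}. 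The main thing to check is just that the Bochner identity above holds with the correct sign, which is a direct computation from $\Delta g = 0$.

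For~\eqref{est:H1X}, the plan is to invoke Dirichlet's principle: among all smooth vector fields $Y$ on $\Si$ with $Y|_{\pr\Si} = N$, the harmonic extension $X$ minimises the Dirichlet energy $\int_\Si |\nabla Y|^2$. This follows by writing any such $Y = X + Z$ with $Z|_{\pr\Si} = 0$, expanding $\int_\Si |\nabla Y|^2 = \int_\Si |\nabla X|^2 + 2\int_\Si g(\nabla X, \nabla Z) + \int_\Si |\nabla Z|^2$, and integrating the cross term by parts: the bulk contribution vanishes by $\Delta X = 0$ and the boundary contribution vanishes by $Z|_{\pr\Si} = 0$. Consequently
\begin{align*}
  \norm{\nabla X}_{L^2(\Si)}^2 & \leq \int_\Si |\nabla \widetilde N|^2 \leq \norm{\widetilde N}_{H^1(\Si)}^2
\end{align*}
for any smooth extension $\widetilde N$ of $N$. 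Taking the infimum over all such extensions and using the definition~\eqref{eq:defH12} of the $H^{1/2}(\pr\Si)$ norm yields~\eqref{est:H1X} (the stated factor $2$ absorbs any non-attainment of the infimum, but in fact constant $1$ already suffices).

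The proof is largely routine and I do not anticipate a genuine obstacle; the only subtleties are the two already mentioned above, namely the sign in the Bochner identity for $|X|^2$ (which requires that $\Delta$ is the rough Laplacian and that $\nabla g = 0$), and the variational characterisation of $X$ for the rough Laplacian on vector fields, where the integration by parts must be carried out for a tensor rather than a scalar but proceeds identically thanks to $Z|_{\pr\Si}=0$.
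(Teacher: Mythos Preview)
Your proposal is correct and follows essentially the same approach as the paper: subharmonicity of $|X|^2$ plus the maximum principle for~\eqref{est:maxppX}, and the energy-minimising property of the harmonic extension for~\eqref{est:H1X}. The only cosmetic difference is that the paper contracts $\Delta(X-\widetilde N)=-\Delta\widetilde N$ with $X-\widetilde N$ to get $\|\nabla(X-\widetilde N)\|_{L^2}\leq\|\nabla\widetilde N\|_{L^2}$ and then uses the triangle inequality (hence the factor~$2$), whereas your Dirichlet-principle expansion gives $\|\nabla X\|_{L^2}\leq\|\nabla\widetilde N\|_{L^2}$ directly with constant~$1$; your remark about the factor~$2$ absorbing non-attainment of the infimum is unnecessary, since constant~$1$ already holds.
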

\begin{proof}
  From Definition~\ref{def:X}, we have $\Delta\left(|X|^2\right) = |\nab X|^2 \geq 0$ in $\Si$ and $|X|^2=1$ on $\prSi$, and~\eqref{est:maxppX} follows from the maximum principle. Let $\widetilde{N}$ be a smooth extension of $N$ on $\Si$. One has $\Delta (X-\widetilde{N}) = - \Delta \widetilde{N}$ which, contracted with $(X-\widetilde{N})$ and integrated on $\Si$, gives $\Vert\nab(X-\widetilde{N})\Vert_{L^2(\Si)} \leq \Vert\nab \widetilde{N}\Vert_{L^2(\Si)}$, and~\eqref{est:H1X} follows.
\end{proof}
\begin{lemma}\label{lem:Sobolevenpassant}
  For all $\Si$-tangent tensors $F$, we have the Sobolev estimate
  \begin{align}\label{est:SobL6L2H12}
    \begin{aligned}
      \norm{F}_{L^6(\Si)} & \les_{\AAA} \norm{\nab F}_{L^2(\Si)} + \norm{F}_{H^{1/2}(\prSi)}.
    \end{aligned}
  \end{align}
\end{lemma}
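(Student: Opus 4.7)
The plan is to apply the Sobolev inequality~\eqref{eq:defsobconst} to the scalar $|F|$ and then to bound the resulting boundary mean $\overline{|F|}$ by $\|F\|_{H^{1/2}(\prSi)}$ via an $L^1(\prSi)$-trace inequality built from the harmonic radius vector field $X$ of Definition~\ref{def:X}.

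For the first step, applying~\eqref{eq:defsobconst} to $|F|$ (after a standard regularisation, since $|F|$ is only Lipschitz), together with Kato's inequality $|\nab |F|| \leq |\nab F|$, yields
\[
\bigl\| |F| - \overline{|F|} \bigr\|_{L^6(\Si)} \leq c_\Sob \, \|\nab F\|_{L^2(\Si)},
\]
where $\overline{|F|} := |\prSi|^{-1}\int_\prSi |F|$. Combined with the triangle inequality and the bound $|\Si|^{1/6} \leq \AAA^{1/6}$, this reduces the proof to establishing $\overline{|F|} \les_\AAA \|F\|_{H^{1/2}(\prSi)}$.

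For the second step, fix any smooth extension $\widetilde F$ of $F$ to $\Si$. Since $X = N$ on $\prSi$, one has $g(X,N) = 1$ on $\prSi$, and the divergence theorem yields
\[
\int_\prSi |F| \;=\; \int_\prSi |\widetilde F|\, g(X,N) \;=\; \int_\Si \operatorname{div}\bigl(|\widetilde F|\, X\bigr) \;=\; \int_\Si \bigl(X(|\widetilde F|) + |\widetilde F| \operatorname{div} X\bigr).
\]
The first term is controlled pointwise by $|X(|\widetilde F|)| \leq |X|\,|\nab |\widetilde F|| \leq |\nab \widetilde F|$ via Kato and $|X|\leq 1$ from~\eqref{est:maxppX}, while for the second term the estimate $\|\operatorname{div} X\|_{L^2(\Si)} \les \|\nab X\|_{L^2(\Si)} \les_\AAA 1$ from~\eqref{est:H1X} and $\|N\|_{H^{1/2}(\prSi)} \leq \AAA$, combined with Cauchy--Schwarz and $|\Si|\leq\AAA$, gives
\[
\int_\prSi |F| \les_\AAA \|\widetilde F\|_{H^1(\Si)}.
\]
Taking the infimum over smooth extensions via the definition~\eqref{eq:defH12} delivers $\overline{|F|} \les_\AAA \|F\|_{H^{1/2}(\prSi)}$, which together with the first step concludes the argument.

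The only real conceptual point is to work at the $L^1(\prSi)$ level on the boundary rather than at the more familiar $L^2(\prSi)$ level. This bypasses any recursive absorption between $\|F\|_{L^6(\Si)}$ and a boundary $L^2$-norm, and it matches exactly the $L^2$-control on $\nab X$ provided by Lemma~\ref{lem:extensionofN}, which is the only nontrivial input beyond Sobolev and Kato.
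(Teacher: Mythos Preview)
Your proof is correct and follows essentially the same approach as the paper: apply the Sobolev inequality~\eqref{eq:defsobconst} to $|F|$ via Kato, and bound the boundary mean $\overline{|F|}$ by the divergence theorem applied to a vector field that restricts to $N$ on $\prSi$. The only cosmetic difference is that you use the harmonic extension $X$ (exploiting $|X|\leq 1$ and $\|\nab X\|_{L^2}\les_\AAA 1$ from Lemma~\ref{lem:extensionofN}), whereas the paper uses an arbitrary smooth extension $\widetilde N$ of $N$ and takes the infimum to recover $\|N\|_{H^{1/2}(\prSi)}$ directly; both choices yield the same estimate.
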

\begin{proof}
  Let $f$ be a scalar function on $\Si$ and let $\widetilde{f}$ be a smooth extension of $f|_{\prSi}$ on $\Si$. Let $\widetilde{N}$ be a smooth extension of $N$ on $\Si$. We have
  \begin{align*}
    \begin{aligned}
      \le|\int_{\pr\Si} f\ri| & = \le|\int_{\pr\Si} \widetilde{f} g(\widetilde{N},N)\ri| = \le|\int_\Si \Div\big(\widetilde{f}\widetilde{N}\big)\ri| \leq \int_\Si |\nab \widetilde{f}| |\widetilde{N}| + \int_\Si|\widetilde{f}||\nab\widetilde{N}| \\
      & \leq 2 \Vert \widetilde{f} \Vert_{H^1(\Si)} \Vert \widetilde{N} \Vert_{H^1(\Si)} \\
      & \leq 2 \norm{f}_{H^{1/2}(\prSi)} \norm{N}_{H^{1/2}(\prSi)},
    \end{aligned}
  \end{align*}
  where, to obtain the last inequality, we applied the definition~\eqref{eq:defH12} and took the infimum over all extensions $\widetilde{f}$ and $\widetilde{N}$. Plugging the above in the Sobolev inequality~\eqref{eq:defsobconst}, we have
  \begin{align*}
    \begin{aligned}
      \norm{f}_{L^6(\Si)} & \leq c_{\Sob}\norm{\nab f}_{L^2(\Si)} + \frac{|\Si|^{1/6}}{|\prSi|}\le|\int_\prSi f\ri| \\
                          & \leq c_{\Sob}\norm{\nab f}_{L^2(\Si)} + 2\frac{|\Si|^{1/6}}{|\prSi|}\norm{N}_{H^{1/2}(\prSi)}\norm{f}_{H^{1/2}(\prSi)}.
    \end{aligned}
  \end{align*}
  For $\Si$-tangent tensors $F$, taking $f=|F|$ and using the next remark finishes the proof of~\eqref{est:SobL6L2H12}.
\end{proof}
\begin{remark}
  For all smooth tensors $F$ on $\pr\Si$ we have
  \begin{align}\label{est:|F|H12}
    \norm{|F|}_{H^{1/2}(\pr\Si)} \leq \norm{F}_{H^{1/2}(\pr\Si)},
  \end{align}
  as direct consequence of the definition~\eqref{eq:defH12}.
\end{remark}

\begin{proposition}\label{prop:extensionofNextra}
  Provided that the hypothesis~\eqref{est:L2RicTh} of Theorem~\ref{thm:main} hold with $\varep$ sufficiently small depending on $\AAA$, we have
  \begin{align}\label{est:H2X}
    \norm{\nab^2X}_{L^2(\Si)} + \norm{\nab X-g}_{L^6(\Si)} & \les_{\AAA} \varep.
  \end{align}
  As a consequence, we have
  \begin{align}\label{est:prSi-3Si}
    \bigg||\prSi|-3|\Si|\bigg| & \leq \sqrt{3|\Si|}\norm{\nab X-g}_{L^2(\Si)} \les_\AAA \varep.
  \end{align}
\end{proposition}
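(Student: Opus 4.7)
The plan is to establish the Hessian bound $\norm{\nab^2 X}_{L^2(\Si)}\les_{\AAA}\varep$ via a Bochner-type identity for the harmonic vector field $X$; the $L^6$ bound on $\nab X - g$ will then follow from the Sobolev inequality of Lemma~\ref{lem:Sobolevenpassant}, and the volume estimate~\eqref{est:prSi-3Si} as an immediate corollary. I start from
\begin{align*}
  \frac{1}{2}\Delta|\nab X|^2 = |\nab^2 X|^2 + g(\nab X, \Delta\nab X).
\end{align*}
Commuting derivatives and using $\Delta X = 0$ together with the contracted Bianchi identity, one has schematically $\Delta \nab X = \Riem * \nab X + \nab\Riem * X$. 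Integration over $\Si$ produces $\int_\Si|\nab^2 X|^2$ on one side and, on the other, the boundary term $\frac{1}{2}\int_{\prSi} N(|\nab X|^2)$ plus two bulk curvature terms. The first bulk term $\int_\Si \Riem * \nab X * \nab X$ is bounded by $\norm{\Riem}_{L^2(\Si)}\norm{\nab X}_{L^4(\Si)}^2$, and after interpolating $\norm{\nab X}_{L^4(\Si)}^2\leq \norm{\nab X}_{L^2(\Si)}\norm{\nab X}_{L^6(\Si)}$ and invoking Lemmas~\ref{lem:extensionofN} and~\ref{lem:Sobolevenpassant}, this reduces to $\varep$ times quantities absorbable into $\norm{\nab^2 X}_{L^2(\Si)}$ via Remark~\ref{rem:absorption}. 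The $\nab\Riem$-term is integrated by parts to move the derivative off $\Riem$, yielding absorbable bulk contributions (using $\norm{X}_{L^\infty(\Si)}\leq 1$) plus a boundary contribution $\int_{\prSi}\Riem * \nab X$ handled via a trace estimate and the Gauss equation on $\prSi$.

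The main obstacle is the boundary term $\frac{1}{2}\int_{\prSi} N(|\nab X|^2)$. I decompose $\nab X$ in an orthonormal frame $(e_a,N)$ adapted to $\prSi$ and exploit $X|_{\prSi} = N$, which forces $\nab_a X_b|_{\prSi} = \th_{ab}$ and $(\nab X)_{aN}|_{\prSi} = 0$, while the normal components $(\nab X)_{Na}, (\nab X)_{NN}$ remain \emph{a priori} unknown. Commuting $\nab_N$ past tangential derivatives introduces $\th$- and $\Riem$-contributions; integration by parts along each closed component of $\prSi$ then rearranges $\int_{\prSi} N(|\nab X|^2)$ into: (i) a coercive term $\int_{\prSi}\trth|\nab_N X|^2$, which I keep on the LHS since~\eqref{est:L2RicTh} forces $\trth\approx 2>0$ in an $H^{1/2}$-sense; (ii) error terms proportional to $\norm{\th-\gd}_{H^{1/2}(\prSi)}\les\varep$ paired with boundary $\nab X$-norms; and (iii) residual Riemann-curvature and cross-terms absorbable via Remark~\ref{rem:absorption}. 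This yields $\norm{\nab^2 X}_{L^2(\Si)}\les_{\AAA}\varep$.

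For the $L^6$ bound, apply Lemma~\ref{lem:Sobolevenpassant} to $F = \nab X - g$ (noting $\nab g = 0$) to get $\norm{F}_{L^6(\Si)}\les_{\AAA} \norm{\nab^2 X}_{L^2(\Si)} + \norm{F}_{H^{1/2}(\prSi)}$. The first summand is controlled by the Hessian bound. For the second, decompose $F|_{\prSi}$ into its tangent-tangent part (equal to $\th-\gd$, small in $H^{1/2}(\prSi)$ by~\eqref{est:L2RicTh}), its tangent-normal part (identically zero), and its normal components (of the form $\nab_N X - N$ at the boundary); the latter is bounded via an auxiliary Pohozaev-type identity, obtained by contracting $\Delta X = 0$ against $\nab_N X$ extended into the bulk, together with the just-established Hessian bound, which yields $\norm{F}_{L^6(\Si)}\les_{\AAA}\varep$. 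Finally,~\eqref{est:prSi-3Si} is immediate: using $\int_\Si\Div X = \int_{\prSi}g(X,N) = |\prSi|$ and $\int_\Si\mathrm{tr}\, g = 3|\Si|$ gives $|\prSi| - 3|\Si| = \int_\Si\mathrm{tr}\, F$, and the pointwise bound $|\mathrm{tr}\, F|\leq\sqrt{3}|F|$ together with Cauchy-Schwarz yields the first inequality of~\eqref{est:prSi-3Si}; finally $\norm{F}_{L^2(\Si)}\leq|\Si|^{1/3}\norm{F}_{L^6(\Si)}\les_{\AAA}\varep$ completes the proof.
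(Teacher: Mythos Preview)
Your outline follows the correct overall strategy (Bochner identity for the harmonic vector field $X$, a coercive boundary contribution, absorption), and your derivation of~\eqref{est:prSi-3Si} at the end is fine. But there is a substantive gap in the boundary analysis that prevents the argument from closing. You identify the coercive term as $\int_{\prSi}\trth\,|\nab_N X|^2$; however, in the near-Euclidean regime one expects $\nab_N X\approx N$ on $\prSi$, so this integral is of order $|\prSi|$, not of order $\varep^2$. Correspondingly, the remaining boundary contributions that arise after your integration by parts---notably $\int_{\prSi}\th^{ab}\th_{ac}\th^c_{\ b}$ and $2\int_{\prSi}\th^{ab}\nab_a\nab_N X_b$---are likewise $O(1)$, not ``proportional to $\norm{\th-\gd}_{H^{1/2}(\prSi)}$'' as you assert in~(ii). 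Without isolating and cancelling these $O(1)$ pieces against one another, you cannot conclude $\norm{\nab^2 X}_{L^2(\Si)}^2\les_\AAA\varep^2$.

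The paper resolves this by carefully rewriting the boundary term so that the coercive piece becomes $\int_{\prSi}\trth\,|\nab_N X - X|^2$ (recall $X=N$ on $\prSi$), which \emph{is} small, while all remaining boundary terms are manifestly of the form $(\th-\gd)\cdot(\text{bounded})$ or higher powers of $\th-\gd$. This single computation places both $\norm{\nab^2 X}_{L^2(\Si)}^2$ and $\norm{\nab X-g}_{L^2(\prSi)}^2$ on the left-hand side simultaneously. With $\norm{\nab X-g}_{L^2(\prSi)}$ in hand, the $L^6$ bound follows immediately from the defining Sobolev inequality~\eqref{eq:defsobconst} applied to $|\nab X-g|$, which controls $\norm{\nab X-g}_{L^6(\Si)}$ by $\norm{\nab^2 X}_{L^2(\Si)}+\norm{\nab X-g}_{L^2(\prSi)}$. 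Your proposed route via Lemma~\ref{lem:Sobolevenpassant} instead requires $\norm{\nab X-g}_{H^{1/2}(\prSi)}$, and the decomposition of this $H^{1/2}$ norm into tangential and normal pieces is not available at this point (the relevant projection estimate, Corollary~\ref{cor:decompperpH12}, is proved later \emph{using} the present proposition); moreover the ``Pohozaev-type identity'' you invoke for the normal piece is not specified and it is unclear how it would yield an $H^{1/2}$ bound rather than merely an $L^2(\prSi)$ one.
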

\begin{proof}
  We have the commutation formula
  \begin{align}\label{eq:commXbase}
    \Delta \nab X & = \nab \Delta X + g^{ij}\nab_i[\nab_j,\nab]X + g^{ij}[\nab_i,\nab]\nab_jX.
  \end{align}
  Integrating on $\Si$ the above identity contracted with $\nab X$, applying three times Stokes formula, we get
  \begin{align}\label{eq:intnab2uidX}
    \begin{aligned}
      \int_{\Si} |\nab^2X|^2 +\BB & = \int_{\Si}|\Delta X|^2 +\int_\Si g^{ij}[\nab_j,\nab]X\cdot\nab_i\nab X - \int_{\Si} g^{ij}[\nab_i,\nab]\nab_jX \cdot \nab X,
    \end{aligned}
  \end{align}
  where $\BB$ is the following boundary term
  \begin{align}\label{eq:bdytermsintnab2uidX}
    \begin{aligned}
       \BB := & \int_{\pr\Si} \le(\Delta X \cdot\nab_NX - \nab X \cdot \nab_N\nab X + [\nab_N,\nab]X \cdot\nab X\ri) \\
        & = \int_{\pr\Si} \le(\Delta X \cdot\nab_NX - \nab X \cdot \nab\nab_N X \ri) \\
        & = \int_{\pr\Si}\le(\le(\nab_N\nab_NX +\nab_a\nab^aX\ri)\cdot\nab_NX - \nab_NX\cdot\nab_N\nab_NX - \nab_aX\cdot\nab^a\nab_N X\ri) \\
        & = \int_{\pr\Si}\le(\nab_a\nab^aX\cdot\nab_NX - \nab_aX\cdot\nab^a\nab_NX\ri).
    \end{aligned}
  \end{align}
  Since $X=N$ (hence $\nab_aX_b=\th_{ab}$ and $\nab_aX_N=0$) on $\prSi$, we have
  \begin{align*}
    \begin{aligned}
      \int_{\pr\Si}\nab_a\nab^aX\cdot\nab_NX & = \int_{\pr\Si}\nab_a\nab^aX_N\nab_NX_N + \int_{\pr\Si}\nab_a\nab^aX_c\nab_NX^c \\
                                                     & = \int_{\pr\Si}\le(\trth\nab_NX_N - \th^{ac}\th_{bc}\ri)\nab_NX_N \\
                                                     & \quad + \int_{\pr\Si}\le(\Nd^a\th_{ac} +\trth\nab_NX_c\ri)\nab_NX^c \\
                                                     & = \int_{\pr\Si}\le(\trth|\nab_NX|^2 - |\th|^2\nab_NX_N +\Divd\th\cdot\nab_NX\ri) \\
                                                     & = \int_{\pr\Si}\le(\trth|\nab_NX|^2 - |\th|^2\nab_NX_N -\th^{ab}\Nd_a(\nab_NX)_b\ri) \\
                                                     & = \int_{\pr\Si}\le(\trth|\nab_NX|^2 - |\th|^2\nab_NX_N -\th^{ab}\nab_a(\nab_NX)_b +\th^{ab}\th_{ab}\nab_NX_N\ri) \\
                                                     & = \int_{\pr\Si}\le(\trth|\nab_NX|^2 -\th^{ab}\nab_a\nab_NX_b -\th^{ab}\th_{ac}\th^{c}_b\ri).
    \end{aligned}
  \end{align*}
  Plugging the above in~\eqref{eq:bdytermsintnab2uidX} and developing, we have
  \begin{align}\label{eq:bdytermsnab2Xfinal}
    \begin{aligned}
      \BB & = \int_{\pr\Si}\nab_a\nab^aX\cdot\nab_NX - \int_{\pr\Si}\nab_aX\cdot\nab^a\nab_NX \\
          & = \int_{\pr\Si}\le(\trth|\nab_NX|^2 -2\th^{ab}\nab_a\nab_NX_b -\th^{ab}\th_{ac}\th^{c}_b\ri) \\
          & = \int_{\pr\Si}\le(\trth|\nab_NX|^2-2\trth\nab_NX_N-4+4\trth + 2|\th-\gd|^2\ri) - 2 \int_\prSi(\th-\gd)^{ab}\nab_a\nab_NX_b \\
          & \quad -\int_\prSi\le(2 + 3(\trth-2) + 3|\th-\gd|^2 +(\th-\gd)^{ab}(\th-\gd)_{ac}(\th-\gd)^{c}_b\ri) \\
          & = \int_{\pr\Si}\trth|\nab_NX-X|^2 - 2 \int_\prSi(\th-\gd)^{ab}\nab_a\nab_N X_b \\
          & \quad -\int_\prSi\le(|\th-\gd|^2 +(\th-\gd)^{ab}(\th-\gd)_{ac}(\th-\gd)^{c}_b\ri),
    \end{aligned}
  \end{align}
  where, to obtain the third identity, we used that
  \begin{align*}
    -2\int_{\pr\Si}\gd^{ab}\nab_a\nab_NX_b & = -2\int_{\pr\Si}\gd^{ab}\le(\nab_a(\nab_NX)_b-\th_{ac}\th^{c}_b\ri) = -2\int_{\pr\Si}\gd^{ab}\le(\Nd_aY_b+\th_{ab}\nab_NX_N-\th_{ac}\th^{c}_b\ri) \\
                                           & = \int_{\pr\Si}\le(-2\trth\nab_NX_N -4 + 4\trth +2|\th-\gd|^2\ri),
  \end{align*}
  where we denoted by $Y$ the projection of $\nab_NX$ on $T\prSi$.
  
  Noticing that
  \begin{align*}
    \begin{aligned}
      \int_{\Si} g^{ij}[\nab_i,\nab_k]\nab_jX_l\nab^k X^l & = \int_{\Si} g^{ij}[\nab_i,\nab_k](\nab_jX_l-g_{jl})\nab^k X^l.
    \end{aligned}
  \end{align*}
  and plugging~\eqref{eq:bdytermsnab2Xfinal} in~\eqref{eq:intnab2uidX}, we obtain
  \begin{align*}
    \begin{aligned}
      \int_\Si|\nab^2 X|^2 + 2 \int_\prSi |\nab_NX-X|^2 & = \int_\Si g^{ij}[\nab_j,\nab]X\cdot\nab_i\nab X -\int_{\Si} g^{ij}[\nab_i,\nab_k](\nab_jX_l-g_{jl})\nab^k X^l\\
                                                        & \quad + \int_{\pr\Si}\bigg(-(\trth-2)|\nab_NX-X|^2 + 2(\th-\gd)^{ab}\nab_a\nab_N X_b \\
                                                        & \quad \quad \quad \quad \quad \quad +|\th-\gd|^2 + (\th-\gd)\cdot(\th-\gd)\cdot(\th-\gd)\bigg).
    \end{aligned}
  \end{align*}
  Hence, using~\eqref{est:maxppX}, we infer that
  \begin{align}\label{est:nab2XnabXgfinal}
    \begin{aligned}
      & \norm{\nab^2X}^2_{L^2(\Si)} + \norm{\nab X-g}_{L^2(\prSi)}^2 \\
      \les & \; \norm{\Riem}_{L^2(\Si)}\le(\norm{\nab X-g}_{L^2(\Si)} + \norm{\nab X-g}^2_{L^4(\Si)} + \norm{\nab^2X}_{L^2(\Si)}\ri) + \EE_{\prSi}
    \end{aligned}
  \end{align}
  with
  \begin{align*}
    \EE_\prSi & := \int_{\pr\Si}\bigg(-(\trth-2)|\nab_NX-X|^2 + 2(\th-\gd)^{ab}\nab_a\nab_N X_{b}\bigg) + \norm{\th-\gd}_{L^2(\prSi)}^2 + \norm{\th-\gd}_{L^3(\prSi)}^3.
  \end{align*}
  Let $\Th$ be a smooth extension of $\th-\gd$ in $\Si$. From~\eqref{est:SobL6L2H12}, we have
  \begin{align}\label{est:proofSobL4Th}
    \begin{aligned}
      \norm{\Th}_{L^6(\Si)} & \les_\AAA \norm{\Th}_{H^1(\Si)}, & \norm{\Th}_{L^4(\Si)} & \leq |\Si|^{1/12}\norm{\Th}_{L^6(\Si)} \les_\AAA \norm{\Th}_{H^1(\Si)}.
    \end{aligned}
  \end{align}
  Using Stokes formula, Cauchy-Schwarz,~\eqref{eq:DirichletX}, and~\eqref{est:proofSobL4Th}, and that $\Th$ is $\prSi$-tangent, we have
  \begin{align}\label{est:ThnabnabXprSi}
    \begin{aligned}
      \int_{\prSi} (\th-\gd)^{ab}\nab_a\nab_N X_{b} & = \int_{\prSi} \Th^{ij}\nab_i\nab_N X_j \\
                                                    & = \int_\Si \le(\nab^{k}\Th^{ij}\nab_i\nab_k X_j + \Th^{ij}\nab^k\nab_i\nab_k X_j\ri) \\
                                                    & \leq \norm{\Th}_{H^1(\Si)}\norm{\nab^2X}_{L^2(\Si)} + \int_\Si \Th^{ij}[\nab^k,\nab_i](\nab_kX_j-g_{kj}) \\
                                                    & \les \norm{\Th}_{H^1(\Si)}\norm{\nab^2X}_{L^2(\Si)} + \norm{\Riem}_{L^2(\Si)}\norm{\Th}_{L^4(\Si)}\norm{\nab X-g}_{L^4(\Si)} \\
                                                    & \les_\AAA \norm{\Th}_{H^1(\Si)}\bigg(\norm{\nab^2X}_{L^2(\Si)} + \norm{\Riem}_{L^2(\Si)}\norm{\nab X-g}_{L^6(\Si)}\bigg).
    \end{aligned}
  \end{align}
  Similarly, using additionally~\eqref{est:H1X}, we have
  \begin{align}\label{est:ThnabXnabXprSi}
    \begin{aligned}
      \int_{\pr\Si}(\trth-2)|\nab_NX-X|^2 & = \int_\prSi\tr\Th\le|\le(\nab X-g\ri)(X)\ri|^2 \\
                                          & = \int_\Si \Div\le(\tr\Th(\nab_XX-X)(\nab X-g)\ri) \\
                                          & = \int_\Si (\nab_XX-X)\cdot \nab\tr\Th \cdot (\nab X-g) \\
                                          & \quad + \int_\Si \tr\Th\Div\le((\nab_XX-X)\cdot(\nab X-g)\ri) \\
                                          & \les \norm{\Th}_{H^1(\Si)}\norm{\nab X-g}_{L^4(\Si)}^2 + \norm{\Th}_{L^4(\Si)}\norm{\nab X-g}_{L^4(\Si)}\norm{\nab^2X}_{L^2(\Si)} \\
                                          & \quad + \norm{\Th}_{L^6(\Si)}\norm{\nab X}_{L^2(\Si)}\norm{\nab X-g}^2_{L^6(\Si)} \\
                                          & \les_\AAA \norm{\Th}_{H^1(\Si)}\bigg(\norm{\nab X-g}_{L^6(\Si)}^2 + \norm{\nab X-g}_{L^6(\Si)}\norm{\nab^2X}_{L^2(\Si)}\\
                                          & \quad\quad\quad\quad\quad\quad\quad\quad + \norm{\nab X-g}_{L^6(\Si)}^2\bigg),                         
    \end{aligned}
  \end{align}
  and
  \begin{align}\label{est:ThL3prSi}
    \begin{aligned}
    \norm{\th-\gd}_{L^3(\prSi)}^3 & = \int_{\prSi}|\Th|^3g(X,N) = \int_{\Si}\Div\le(|\Th|^3X\ri) \\
                                  & = \int_{\Si}3|\Th|^2\nab_X\Th + \int_{\Si}|\Th|^3(\Div X) \\
                                  & \les \norm{\Th}_{L^4(\Si)}^2\norm{\Th}_{H^1(\Si)} + \norm{\Th}^3_{L^6(\Si)}\norm{\nab X}_{L^2(\Si)} \\
                                  & \les_\AAA \norm{\Th}_{H^1(\Si)}^3, 
    \end{aligned}
  \end{align}
  and
  \begin{align}\label{est:ThL2prSi}
    \norm{\th-\gd}_{L^2(\prSi)}^2 & \les_\AAA \norm{\Th}_{H^1(\Si)}^2.
  \end{align}
  Combining~\eqref{est:proofSobL4Th} in~\eqref{est:ThnabnabXprSi}, \eqref{est:ThnabXnabXprSi}, \eqref{est:ThL3prSi}, and taking the infimum for all extensions $\Th$ of $\th-\gd$, using~\eqref{eq:defH12}, we have
  \begin{align}\label{est:EEprSiradius}
    \begin{aligned}
      \EE_\prSi & \les_\AAA \norm{\th-\gd}_{H^{1/2}(\prSi)} \bigg(\norm{\nab^2X}_{L^2(\Si)} + \norm{\nab X-g}^2_{L^6(\Si)} +\norm{\nab X-g}_{L^6(\Si)}\norm{\nab^2X}_{L^2(\Si)} \\
      & \quad\quad\quad\quad\quad\quad\quad + \norm{\Riem}_{L^2(\Si)}\norm{\nab X-g}_{L^6(\Si)} \bigg) + \norm{\th-\gd}^2_{H^{1/2}(\prSi)} + \norm{\th-\gd}^3_{H^{1/2}(\prSi)}.
    \end{aligned}
  \end{align}
  The Sobolev estimate~\eqref{eq:defsobconst} for $f=|\nab X-g|$ yields
  \begin{align}\label{est:SobnabXg}
    \begin{aligned}
    \norm{\nab X-g}_{L^6(\Si)} & \leq c_{\Sob}\norm{\nab^2X}_{L^2(\Si)} + \frac{|\Si|^{1/6}}{|\prSi|^{1/2}} \norm{\nab X-g}_{L^2(\prSi)}.
    \end{aligned}
  \end{align}
  Combining~\eqref{est:nab2XnabXgfinal}, \eqref{est:EEprSiradius} and~\eqref{est:SobnabXg}, using the hypothesis~\eqref{est:L2RicTh} and that one can take $\varep<1$ (so that $\varep^3<\varep^2<\varep$), we obtain
  \begin{align*}
    \begin{aligned}
      & \norm{\nab^2X}_{L^2(\Si)}^2 + \norm{\nab X-g}_{L^6(\Si)}^2 + \norm{\nab X-g}^2_{L^2(\prSi)} \\
      \les_\AAA \; &  \varep\bigg(\norm{\nab X-g}_{L^6(\Si)} + \norm{\nab X-g}^2_{L^6(\Si)} \\
      & \quad + \le(1+\norm{\nab X-g}_{L^6(\Si)}\ri)\norm{\nab^2X}_{L^2(\Si)}\bigg) + \varep^2.
    \end{aligned}
  \end{align*}
  The desired~\eqref{est:H2X} follows by an absorption argument, provided that $\varep$ is small with respect to $\AAA$. The inequality~\eqref{est:prSi-3Si} then follows from Stokes formula:
  \begin{align*}
    \big||\prSi|-3|\Si|\big| & = \le|\int_{\prSi}g(X,N) - \int_\Si 3\ri| = \le|\int_\Si (\Div X-3)\ri| \leq \sqrt{3}\int_\Si|\nab X-g|
  \end{align*}
  together with Cauchy-Schwarz, and this finishes the proof of the proposition.
\end{proof}

\begin{remark}\label{rem:|prSi||Si|-1}
  Given the assumptions~\eqref{est:constants}, we deduce from~\eqref{est:prSi-3Si} that $|\prSi| \les_\AAA 1$ and $1/|\Si| \les_\AAA 1$ provided that $\varep$ is sufficiently small depending on $\AAA$. In the rest of the paper, we will therefore always silently bound $|\Si|, 1/|\Si|, |\prSi|$, and $1/|\prSi|$ by $\AAA$, without explicitly displaying them in the estimates.  
\end{remark}

\section{Functional estimates}\label{sec:func}
The goal of this section is to prove, starting only from the trace norm definition~\eqref{eq:defH12}, the Sobolev constant definition~\eqref{eq:defsobconst} and the curvature and second fundamental form bounds~\eqref{est:L2RicTh}, a series of general (Sobolev and elliptic) estimates. More precisely, we prove
\begin{itemize}
\item an $H^1(\Si)\xhookrightarrow{} L^6(\Si)$ estimate in Lemma~\ref{lem:firstfunctionalestimates} and an $H^2(\Si)\xhookrightarrow{} L^\infty(\Si)$ estimate in Lemma~\ref{lem:Linfty},
\item an $H^{1/2}(\prSi)\xhookrightarrow{}L^4(\prSi)$ estimate in Lemma~\ref{lem:firstfunctionalestimates} and an $H^{3/2}(\prSi)\xhookrightarrow{}L^\infty(\prSi)$ estimate in Lemma~\ref{lem:ellipticboundary},
\item an elliptic estimate on the boundary $\Ld^{-1}: H^{-1/2}(\prSi)\xrightarrow{}H^{3/2}(\prSi)$ in Lemma~\ref{lem:ellipticboundary},
\item elliptic and product estimates in Section~\ref{sec:technicalSi}. 
\end{itemize}
In fact, the estimates on $\prSi$ follow naturally from the ones on $\Si$ since we defined $H^{1/2}$ as a trace norm~\eqref{eq:defH12}. The bridge between these estimates relies on Stokes formula and on the harmonic extension $X$ of the normal to $\prSi$, for which we have obtained a suitable control in Section~\ref{sec:harmonicradius}. The functional estimates proved in this section are far from being exhaustive. For concision, we only proved what is needed in the rest of the article.\\

Last but not least, a remarkable consequence of our estimates is that $\prSi$ cannot have more than one connected component (Corollary~\ref{cor:prSiconnected}).

\subsection{Sobolev estimates}
\begin{lemma}\label{lem:firstfunctionalestimates}
  Provided that the hypothesis of Theorem~\ref{thm:main} hold with $\varep$ sufficiently small, we have
  \begin{align}
    \norm{F}_{L^2(\pr\Si)} & \les_\AAA \norm{F}_{L^4(\prSi)} \les_\AAA \norm{F}_{H^{1/2}(\pr\Si)},\label{est:L2H12pf}
  \end{align}
  and
  \begin{align}
    \norm{F}_{L^6(\Si)} & \les_\AAA \norm{\nab F}_{L^2(\Si)} + \norm{F}_{L^2(\Si)},\label{est:sobeucl1bis}
  \end{align}
  for all $\Si$-tangent tensors $F$.
\end{lemma}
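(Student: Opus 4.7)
The plan is to prove~\eqref{est:sobeucl1bis} first, since it is a direct consequence of Lemma~\ref{lem:Sobolevenpassant}, and then to bootstrap it into the trace estimate~\eqref{est:L2H12pf} by a Stokes' formula argument using the harmonic radius vectorfield $X$ to convert the boundary $L^4$ integral into a bulk one.

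For~\eqref{est:sobeucl1bis}, I observe that for $F$ defined on $\Si$, the tensor $F$ itself serves as a smooth extension of its own boundary trace, so the infimum defining~\eqref{eq:defH12} yields the trivial bound $\norm{F}_{H^{1/2}(\prSi)} \leq \norm{F}_{H^1(\Si)}$. Plugging this into the conclusion of Lemma~\ref{lem:Sobolevenpassant} gives~\eqref{est:sobeucl1bis} immediately. The first half of~\eqref{est:L2H12pf}, namely $\norm{F}_{L^2(\prSi)} \les_\AAA \norm{F}_{L^4(\prSi)}$, is H\"older's inequality combined with the bound $|\prSi| \les_\AAA 1$ from Remark~\ref{rem:|prSi||Si|-1}.

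The main step is then $\norm{F}_{L^4(\prSi)} \les_\AAA \norm{F}_{H^{1/2}(\prSi)}$. Given a smooth extension $\widetilde{F}$ of $F$ in $\Si$, using that $X=N$ and $g(N,N)=1$ on $\prSi$, I would apply Stokes' formula to write
\begin{align*}
  \int_{\prSi} |\widetilde{F}|^4 = \int_{\prSi} |\widetilde{F}|^4 g(X,N) = \int_\Si \Div \le(|\widetilde{F}|^4 X\ri) \les \int_\Si |\widetilde{F}|^3 |\nab \widetilde{F}| |X| + \int_\Si |\widetilde{F}|^4 |\Div X|.
\end{align*}
The key inputs to control the right-hand side are $|X| \leq 1$ from~\eqref{est:maxppX} and the bound $\norm{\Div X}_{L^3(\Si)} \les_\AAA 1$, which follows from writing $\Div X - 3 = \tr(\nab X - g)$ and combining Proposition~\ref{prop:extensionofNextra} with H\"older. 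A pair of H\"older estimates then reduce the right-hand side to $\norm{\widetilde{F}}_{L^6(\Si)}^3 \norm{\nab \widetilde{F}}_{L^2(\Si)} + \norm{\widetilde{F}}_{L^6(\Si)}^4$; applying the already proved~\eqref{est:sobeucl1bis} to $\widetilde{F}$ gives $\norm{\widetilde{F}}_{L^6(\Si)} \les_\AAA \norm{\widetilde{F}}_{H^1(\Si)}$, and taking the infimum over all extensions concludes the proof.

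The main subtlety I expect is the strict order in which these estimates must be chained: since the $H^{1/2}(\prSi)$ norm is defined abstractly through extensions, no boundary trace inequality is directly available at this stage, and any such bound must convert the boundary integral into a bulk one before invoking an interior Sobolev estimate. This conversion is made possible precisely by the harmonic radius vectorfield $X$ and its uniform and $L^6$-control derived in the previous section; beyond this bookkeeping, no real obstacle arises.
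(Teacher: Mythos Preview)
Your proposal is correct and follows essentially the same route as the paper: deduce~\eqref{est:sobeucl1bis} from Lemma~\ref{lem:Sobolevenpassant} via the trivial bound $\norm{F}_{H^{1/2}(\prSi)}\leq\norm{F}_{H^1(\Si)}$, get the $L^2$--$L^4$ inequality from H\"older and $|\prSi|\les_\AAA 1$, and prove the $L^4$--$H^{1/2}$ bound by the Stokes/divergence trick with the harmonic radius vectorfield $X$. The only cosmetic differences are that the paper reduces to the scalar case $f=|F|$ before extending (using~\eqref{est:|F|H12}) and bounds the divergence term via $\norm{\nab X}_{L^6(\Si)}$ rather than your $\norm{\Div X}_{L^3(\Si)}$, but both choices are equivalent here.
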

\begin{proof}
  Estimate~\eqref{est:sobeucl1bis} is a direct consequence of~\eqref{est:SobL6L2H12} and~\eqref{eq:defH12}. Let $f$ be a scalar function on $\prSi$. The first inequality in~\eqref{est:L2H12pf} is a consequence of Cauchy-Schwarz and~\eqref{est:prSi-3Si} (see also Remark~\ref{rem:|prSi||Si|-1}). Arguing as in the proof of Proposition~\ref{prop:extensionofNextra}, using~\eqref{est:maxppX}, \eqref{est:H2X} and~\eqref{est:sobeucl1bis} and noting $\widetilde{f}$ a smooth extension of $f$ on $\Si$, we have
  \begin{align*}
    \begin{aligned}
      \norm{f}_{L^4(\pr\Si)}^4 & = \int_\prSi |f|^4 g(X,N) = \int_\Si \Div\le(|\widetilde{f}|^4X\ri) \\
      & \les \Vert \widetilde{f} \Vert^3_{L^6(\Si)}\Vert \nab \widetilde{f} \Vert_{L^2(\Si)} + |\Si|^{1/6} \Vert \widetilde{f} \Vert^4_{L^6(\Si)} \norm{\nab X}_{L^6(\Si)} \\
      & \les_\AAA \Vert \widetilde{f} \Vert_{H^1(\Si)}^4.
    \end{aligned}
  \end{align*}
  Taking the infimum on all $\widetilde{f}$, the second inequality in~\eqref{est:L2H12pf} follows. For tensors we take $f=|F|$ and use~\eqref{est:|F|H12}.
\end{proof}

From the above estimates, we infer the following corollary.
\begin{corollary}\label{cor:decompperpH12}
  Let $F$ be a $\Si$-tangent $(k,l)$-tensor on $\prSi$. Let $F'$ be any projection of $F$ on $T\prSi$ or $T\prSi^\perp$. We have
  \begin{align}\label{est:F'FH12}
    \norm{F'}_{H^{1/2}(\prSi)} \les_{\AAA,k,l} \norm{F}_{H^{1/2}(\prSi)}.
  \end{align}
  In particular, we have
  \begin{align}\label{est:gdH12}
    \norm{\gd}_{H^{1/2}(\prSi)} & \les_\AAA \norm{g}_{H^{1/2}(\prSi)} \les_\AAA 1.
  \end{align}
\end{corollary}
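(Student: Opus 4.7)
The plan is to extend the projection operation from $\prSi$ into $\Si$ by using the harmonic radius vectorfield $X$ as a substitute for the unit normal $N$, exploiting the estimates $|X|\leq 1$ from~\eqref{est:maxppX} and $\norm{\nab X}_{L^6(\Si)}\leq \norm{\nab X - g}_{L^6(\Si)}+\norm{g}_{L^6(\Si)}\les_\AAA 1$ from~\eqref{est:H2X}.

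Given any smooth extension $\widetilde{F}$ of $F$ to $\Si$, I would first consider the case of a vector $V$ on $\prSi$, where the tangential projection is $V'=V-g(V,N)N$ on $\prSi$. Set $\widetilde{V'}:=\widetilde{V}-g(\widetilde{V},X)X$: since $X=N$ on $\prSi$, this is a valid extension of $V'$. Expanding
\begin{align*}
  \nab\widetilde{V'} & = \nab\widetilde{V} - \bigl(g(\nab\widetilde{V},X)+g(\widetilde{V},\nab X)\bigr)\otimes X - g(\widetilde{V},X)\,\nab X,
\end{align*}
one estimates the last two factors by H\"older: the terms linear in $\nab\widetilde{V}$ are controlled by $\norm{X}^2_{L^\infty(\Si)}\norm{\nab\widetilde{V}}_{L^2(\Si)}$, while the terms containing $\nab X$ are of the form $\widetilde{V}\cdot\nab X$ and are estimated by $\norm{\widetilde{V}}_{L^3(\Si)}\norm{\nab X}_{L^6(\Si)}\les_\AAA\norm{\widetilde{V}}_{H^1(\Si)}$, where the Sobolev embedding $H^1(\Si)\hookrightarrow L^6(\Si)\hookrightarrow L^3(\Si)$ of Lemma~\ref{lem:firstfunctionalestimates} (together with Remark~\ref{rem:|prSi||Si|-1}) is used. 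Combined with $\norm{\widetilde{V'}}_{L^2(\Si)}\leq 2\norm{\widetilde{V}}_{L^2(\Si)}$, this gives $\norm{\widetilde{V'}}_{H^1(\Si)}\les_\AAA\norm{\widetilde{V}}_{H^1(\Si)}$. Taking the infimum over all admissible extensions $\widetilde{V}$ and using the definition~\eqref{eq:defH12} yields~\eqref{est:F'FH12} for vectors; the normal projection $V\mapsto g(V,X)X$ is handled by the same computation. For a general $(k,l)$-tensor, the projection in each slot is implemented by the same formula applied slotwise, and the estimate iterates with a combinatorial constant depending on $k$ and $l$.

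For the $\gd$ statement, observe that on $\prSi$, $\gd$ is exactly the tangential projection of $g$ in both slots. Since $\nab g = 0$, the metric $g$ itself provides a smooth extension of its restriction to $\prSi$, with $\norm{g}_{H^1(\Si)} = \norm{g}_{L^2(\Si)} = \sqrt{3}\,|\Si|^{1/2}\les_\AAA 1$ (using $|g|^2=3$ and Remark~\ref{rem:|prSi||Si|-1}). Hence $\norm{g}_{H^{1/2}(\prSi)}\les_\AAA 1$, and applying~\eqref{est:F'FH12} twice to project each slot onto $T\prSi$ yields $\norm{\gd}_{H^{1/2}(\prSi)}\les_\AAA 1$.

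The only mildly delicate point is the control of the derivative term $g(\widetilde{V},\nab X)$, which forces the use of the $L^6$ bound on $\nab X$ from Proposition~\ref{prop:extensionofNextra} rather than the $L^2$ bound from Lemma~\ref{lem:extensionofN}; since this bound has already been established in the preceding section, the remainder of the argument consists only of algebraic bookkeeping and H\"older's inequality.
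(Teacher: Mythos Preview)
Your proof is correct and follows essentially the same approach as the paper: extend the projection into $\Si$ by replacing $N$ with the harmonic radius vectorfield $X$, then use $|X|\leq 1$ and the $H^2$-control on $X$ from Proposition~\ref{prop:extensionofNextra} together with Sobolev embeddings to bound the $H^1$-norm of the extended projection. The only cosmetic differences are that the paper works with the normal component $F_N$ of a $1$-form (extended as $\widetilde{F}_X$) rather than the tangential part, and uses the H\"older pairing $L^4\times L^4$ in place of your $L^3\times L^6$; your treatment of the $\gd$ case is more explicit than the paper's, which simply records it as a consequence.
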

\begin{proof}
  Take $F$ a $1$-form and $\widetilde{F}$ a smooth extension of $F$ on $\Si$. Using~\eqref{est:maxppX}, \eqref{est:sobeucl1bis}, and~\eqref{est:H2X}, we have
  \begin{align*}
    \norm{F_N}_{H^{1/2}(\prSi)} & \leq \Vert \widetilde{F}_X\Vert_{H^1(\Si)} \les \Vert \widetilde{F} \Vert_{H^1(\Si)} + \Vert \widetilde{F} \Vert_{L^4(\Si)} \norm{\nab X}_{L^4(\Si)} \les_\AAA \Vert \widetilde{F} \Vert_{H^1(\Si)}.
  \end{align*}
  Taking the infimum on all smooth extensions $\widetilde{F}$, using~\eqref{eq:defH12}, the desired~\eqref{est:F'FH12} follows for $1$-forms. Arguing similarly for any $(k,l)$-tensors, this finishes the proof of the corollary.
\end{proof}

We have the following $L^\infty$-Sobolev estimates on $\Si$.
\begin{lemma}\label{lem:Linfty}
  Provided that the hypothesis of Theorem~\ref{thm:main} hold with $\varep$ sufficiently small, we have
  \begin{align}
    \norm{F}_{L^\infty(\Si)} & \les_\AAA \norm{F}_{H^2(\Si)}.\label{est:sobeucl2bis}
  \end{align}
\end{lemma}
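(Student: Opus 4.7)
}
The plan is to derive \eqref{est:sobeucl2bis} from the $L^6$--Sobolev estimate \eqref{est:sobeucl1bis} by a short Moser-type iteration, exploiting that \eqref{est:sobeucl1bis} applies to any $\Si$-tangent tensor and in particular, via Kato's inequality, to the scalars $|F|^p$ for $p\geq 1$.

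As a first step, applying \eqref{est:sobeucl1bis} to $F$ and to $\nab F$ (which is itself a $\Si$-tangent tensor) yields
\begin{align*}
  \norm{F}_{L^6(\Si)} + \norm{\nab F}_{L^6(\Si)} & \les_\AAA \norm{F}_{H^2(\Si)},
\end{align*}
which provides the $L^6$ starting point of the iteration. Next, for every $p\geq 2$, applying \eqref{est:sobeucl1bis} to the scalar $|F|^p$ and using Kato's inequality $|\nab |F|^p|\leq p|F|^{p-1}|\nab F|$ (a.e.) followed by H\"older's inequality with exponents $(3/2,3)$, I obtain
\begin{align*}
  \norm{F}_{L^{6p}(\Si)}^{p} & \les_\AAA p\,\norm{F}_{L^{3(p-1)}(\Si)}^{p-1}\norm{\nab F}_{L^6(\Si)} + \norm{F}_{L^{2p}(\Si)}^{p}.
\end{align*}

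I then iterate with $p=p_k:=2^k$ ($k\geq 1$). Since $3(p_k-1),\, 2p_k \leq 6p_{k-1}$ and $|\Si|\les_\AAA 1$ (Remark~\ref{rem:|prSi||Si|-1}), another H\"older inequality reduces the lower-exponent norms on the right-hand side to $\norm{F}_{L^{6p_{k-1}}(\Si)}$, up to a bounded volume factor. Setting $M_k:=\norm{F}_{L^{6p_k}(\Si)}/\norm{F}_{H^2(\Si)}$, the recursion takes the form
\begin{align*}
  M_k & \leq \bigl(C_\AAA(p_k+1)\bigr)^{1/p_k}\max(M_{k-1},1),
\end{align*}
and since $\sum_k \log(C_\AAA(p_k+1))/p_k = \sum_k(\log C_\AAA+O(k))/2^k$ is finite, the sequence $(M_k)_k$ is uniformly bounded. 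Passing to the limit, and using that $\norm{F}_{L^\infty(\Si)}=\lim_{k\to\infty}\norm{F}_{L^{6\cdot 2^k}(\Si)}$ since $|\Si|<\infty$, I conclude \eqref{est:sobeucl2bis}.

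The only mildly technical point is the bookkeeping of the constants $C_\AAA$ across the iteration (ensuring that they remain bounded as $k\to\infty$, which is the classical Moser argument); the reduction to the scalar $|F|$ via Kato's inequality costs nothing.
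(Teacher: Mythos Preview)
Your proof is correct and follows essentially the same approach as the paper: both run the classical Moser iteration (as in Gilbarg--Trudinger) starting from the $L^6$ Sobolev estimate~\eqref{est:sobeucl1bis}, applying it to $|F|^{2^k}$ via Kato's inequality and H\"older, and summing the logarithms of the constants. The only cosmetic difference is that the paper normalises $F$ at the outset (dividing by $(1+|\Si|)C(\norm{\nab F}_{L^6}+\norm{F}_{L^6})$) so that the recursion reads $\Vert\widetilde{F}\Vert_{L^{6\cdot 2^k}}^{2^k}\leq 2^k\Vert\widetilde{F}\Vert_{L^{6\cdot 2^{k-1}}}^{2^k-1}$, whereas you carry the normalisation through via $M_k$.
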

\begin{proof}
  The proof that~\eqref{est:sobeucl2bis} follows from~\eqref{est:sobeucl1bis} by iteration is classical, see~\cite[p.157]{Gil.Tru01}. We reproduce it here, for convenience of the reader. Let $F$ be a tensor on $\Si$ and define
  \begin{align}\label{eq:renormalisingFproofGilTru}
    \widetilde{F} & :=  (1+|\Si|)^{-1}C^{-1}\le(\norm{\nab F}_{L^6(\Si)} + \norm{F}_{L^6(\Si)}\ri)^{-1} F,
  \end{align}
  where $C$ is the constant in the inequality~\eqref{est:sobeucl1bis} (which depends on $\AAA$). For $k\in\mathbb{N}$, applying~\eqref{est:sobeucl1bis} to $|\widetilde{F}|^{2^k}$, using H\"older estimate and~\eqref{eq:renormalisingFproofGilTru}, we have
  \begin{align*}
    \begin{aligned}
      \Vert \widetilde{F}\Vert^{2^k}_{L^{6\times2^k}(\Si)} & \leq C 2^k \Vert \widetilde{F} \Vert_{L^{3\times(2^{k}-1)}(\Si)}^{(2^k-1)}\Vert\nab \widetilde{F}\Vert_{L^6(\Si)} + C \Vert\widetilde{F}\Vert^{2^k}_{L^{2\times2^k}(\Si)} \\
                                                           & \leq 2^k \Vert\widetilde{F}\Vert_{L^{6\times2^{k-1}}(\Si)}^{(2^k-1)} \underbrace{|\Si|^{2^{-k}/3}}_{\leq 1+|\Si|} C\le(\Vert\nab \widetilde{F}\Vert_{L^6(\Si)} + \Vert\widetilde{F}\Vert_{L^6(\Si)}\ri) \\
                                           & \leq 2^k \Vert\widetilde{F}\Vert_{L^{6\times2^{k-1}}(\Si)}^{(2^k-1)}.
    \end{aligned}
  \end{align*}
  By iteration, using~\eqref{eq:renormalisingFproofGilTru}, we infer
  \begin{align*}
    \begin{aligned}
      \Vert\widetilde{F}\Vert_{L^{6\times2^n}(\Si)} & \leq \exp\le(\log(2)\le(\sum_{k=0}^n k 2^{-k} \ri)\ri) \Vert\widetilde{F}\Vert_{L^6(\Si)}^{\prod_{k=1}^n(1-2^{-k})} \les_\AAA 1.
    \end{aligned}
  \end{align*}
  Hence, using that $\Vert\widetilde{F}\Vert_{L^{6\times2^n}(\Si)} \xrightarrow{n\to+\infty} \Vert\widetilde{F}\Vert_{L^\infty(\Si)}$ and using~\eqref{eq:renormalisingFproofGilTru}, we deduce that
  \begin{align*}
    \norm{F}_{L^\infty(\Si)} & \les_\AAA \norm{\nab F}_{L^6(\Si)} + \norm{F}_{L^6(\Si)},
  \end{align*}
  from which, using~\eqref{est:sobeucl1bis}, the desired~\eqref{est:sobeucl2bis} follows. 
\end{proof}

\subsection{Elliptic estimates on $\prSi$}\label{sec:ellipticprSi}
\begin{definition}\label{def:H-12}
  For all $\Si$-tangent tensor $F$, we define the dual $H^{-1/2}(\prSi)$ norm by
  \begin{align*}
    \norm{F}_{H^{-1/2}(\prSi)} & := \sup_{G\neq0} \le(\frac{1}{\norm{G}_{H^{1/2}(\prSi)}} \int_\prSi F\cdot G\ri).
  \end{align*}
\end{definition}
\begin{remark}
  From~\eqref{est:L2H12pf}, we have
  \begin{align*}
    \int_{\pr\Si} F\cdot G \leq \norm{F}_{L^2(\pr\Si)}\norm{G}_{L^2(\pr\Si)} \les_\AAA \norm{F}_{L^2(\pr\Si)}\norm{G}_{H^{1/2}(\pr\Si)},
  \end{align*}
  hence
  \begin{align}\label{est:H-12L2}
    \norm{F}_{H^{-1/2}(\pr\Si)} \les_\AAA \norm{F}_{L^2(\pr\Si)}.
  \end{align}
\end{remark}

\begin{lemma}[Elliptic estimates on $\pr\Si$]\label{lem:ellipticboundary}
  Provided that the hypothesis of Theorem~\ref{thm:main} are satisfied with $\varep$ sufficiently small, the following holds. For all function $u$ on $\pr\Si$ with $\int_{\pr\Si} u = 0$, we have
  \begin{align}\label{est:H32H-12ell}
    \norm{u}_{H^{3/2}(\pr\Si)} + \norm{u}_{L^\infty(\prSi)} & \les_\AAA \norm{\Ld u}_{H^{-1/2}(\pr\Si)},
  \end{align}
  where here and in the sequel, we denote by $\Ld$ the Laplace-Beltrami operator on $(\prSi,\gd)$.
\end{lemma}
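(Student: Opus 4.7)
The plan is to bound $u$ via its harmonic extension to $\Si$. Set $f:=\Ld u$ and let $\widetilde u$ be the solution of $\Delta \widetilde u=0$ in $\Si$ with $\widetilde u|_\prSi = u$. The goal is to prove $\norm{\widetilde u}_{H^2(\Si)}\les_\AAA \norm{f}_{H^{-1/2}(\prSi)}$. Once this bound is in hand, the trace definition~\eqref{eq:defH12} directly gives $\norm{u}_{H^{1/2}(\prSi)}\leq \norm{\widetilde u}_{H^1(\Si)}$, while the same definition applied to the tangential part of $\nab \widetilde u|_\prSi$ (isolated via Corollary~\ref{cor:decompperpH12}) yields $\norm{\Nd u}_{H^{1/2}(\prSi)}\les_\AAA \norm{\widetilde u}_{H^2(\Si)}$, hence $\norm{u}_{H^{3/2}(\prSi)}\les_\AAA \norm{\widetilde u}_{H^2(\Si)}$; the bound $\norm{u}_{L^\infty(\prSi)} \leq \norm{\widetilde u}_{L^\infty(\Si)} \les_\AAA \norm{\widetilde u}_{H^2(\Si)}$ then follows from Lemma~\ref{lem:Linfty}.

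The cornerstone of the main estimate is the integrated Bochner identity~\eqref{eq:Bochnerintro}, which for the harmonic $\widetilde u$ reduces to
\begin{align*}
  \int_\Si |\nab^2 \widetilde u|^2 + \int_\Si \RRRic(\nab \widetilde u, \nab \widetilde u) + \int_\prSi \th(\Nd u, \Nd u) + \int_\prSi \trth\,(N\widetilde u)^2 = 2\int_\prSi \Nd^a u\,\Nd_a(N\widetilde u).
\end{align*}
Since $\prSi$ is a closed surface, integrating by parts on $\prSi$ converts the right-hand side into $-2\int_\prSi f\,(N\widetilde u)$, which by Definition~\ref{def:H-12} is bounded by $\norm{f}_{H^{-1/2}(\prSi)}\,\norm{N\widetilde u}_{H^{1/2}(\prSi)}$. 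To control $\norm{N\widetilde u}_{H^{1/2}(\prSi)}$ I exploit the harmonic radius vectorfield $X$ of Section~\ref{sec:harmonicradius}: the scalar $g(\nab\widetilde u, X)$ is a smooth extension of $N\widetilde u|_\prSi$, whose $H^1(\Si)$-norm is $\les_\AAA \norm{\nab \widetilde u}_{L^2(\Si)} + \norm{\nab^2 \widetilde u}_{L^2(\Si)}$ thanks to $|X|\leq 1$ (Lemma~\ref{lem:extensionofN}), $\norm{\nab X - g}_{L^6}\les_\AAA \varep$ (Proposition~\ref{prop:extensionofNextra}), and Lemma~\ref{lem:firstfunctionalestimates}. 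Using further $\norm{\Riem}_{L^2}\leq\varep$ to absorb the bulk curvature term via H\"older and Sobolev, and $\norm{\th-\gd}_{H^{1/2}(\prSi)}\leq\varep$ so that the boundary quadratic forms are coercive modulo errors treated exactly as in~\eqref{est:ThnabXnabXprSi}--\eqref{est:ThL3prSi}, the Bochner identity yields after absorption
\begin{align*}
  \norm{\nab^2 \widetilde u}_{L^2(\Si)}^2 + \norm{\Nd u}_{L^2(\prSi)}^2 + \norm{N\widetilde u}_{L^2(\prSi)}^2 \les_\AAA \norm{f}_{H^{-1/2}(\prSi)}\bigl(\norm{\nab\widetilde u}_{L^2(\Si)} + \norm{\nab^2 \widetilde u}_{L^2(\Si)}\bigr).
\end{align*}

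To close the estimate, I bring in the energy identity $\int_\Si |\nab \widetilde u|^2 = \int_\prSi u\,(N\widetilde u)$. Since $\int_\prSi u = 0$, the Sobolev constant~\eqref{eq:defsobconst} and Lemma~\ref{lem:firstfunctionalestimates} give $\norm{u}_{L^2(\prSi)}\les_\AAA \norm{u}_{H^{1/2}(\prSi)} \leq \norm{\widetilde u}_{H^1(\Si)}\les_\AAA \norm{\nab \widetilde u}_{L^2(\Si)}$; Cauchy--Schwarz then produces $\norm{\nab \widetilde u}_{L^2(\Si)}\les_\AAA \norm{N\widetilde u}_{L^2(\prSi)}$. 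Substituting this back into the previous display and applying the absorption arguments of Remark~\ref{rem:absorption} delivers $\norm{\widetilde u}_{H^2(\Si)}\les_\AAA \norm{f}_{H^{-1/2}(\prSi)}$, concluding the argument.

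The main obstacle will be closing the Bochner estimate: one has to simultaneously extract good signs from the two boundary quadratic forms (treating the errors generated by $\trth-2$ and $\th-\gd$ via extensions to $\Si$ as in Section~\ref{sec:harmonicradius}), absorb the bulk Ricci term, and dualise $\int_\prSi f\,(N\widetilde u)$ into a quantity controlled by the Hessian rather than by some higher-order norm. The energy identity is essential at the closure step, since Bochner alone controls $\norm{\nab^2 \widetilde u}_{L^2}$ only modulo $\norm{\nab \widetilde u}_{L^2}$, and the two estimates must be chained through the absorption argument of Remark~\ref{rem:absorption}.
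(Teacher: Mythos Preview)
Your proposal is correct and follows essentially the same approach as the paper: harmonic extension, the Bochner identity with the boundary term integrated by parts to produce the $H^{-1/2}\times H^{1/2}$ pairing, the energy identity combined with the Sobolev inequality~\eqref{eq:defsobconst} to control $\norm{\nab\widetilde u}_{L^2}$ by $\norm{N\widetilde u}_{L^2(\prSi)}$, and absorption. The only cosmetic difference is that the paper handles the boundary error terms in $\th-\gd$ and $\trth-2$ directly via the $L^4(\prSi)$ embedding of Lemma~\ref{lem:firstfunctionalestimates} rather than by the bulk-extension arguments of Section~\ref{sec:harmonicradius}, and it packages your extension $g(\nab\widetilde u,X)$ inside Corollary~\ref{cor:decompperpH12}.
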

\begin{proof}
  Let $u$ on $\pr\Si$ be such that $\int_{\pr\Si}u =0$ and extend $u$ as an harmonic function on $\Si$. We recall the energy identity~\eqref{eq:energyintro}:
  \begin{align}\label{eq:energproofell}
    \norm{\nab u}^2_{L^2(\Si)} & = \int_{\pr\Si} u N(u),
  \end{align}
  and the Sobolev inequality~\eqref{eq:defsobconst}, which using that $\int_{\pr\Si} u =0$, gives 
  \begin{align}\label{eq:Sobproofell}
    \norm{u}_{L^6(\Si)} & \leq c_{\Sob} \norm{\nab u}_{L^2(\Si)}.
  \end{align}
  Combining~\eqref{eq:energproofell} and~\eqref{eq:Sobproofell}, and using~\eqref{eq:defH12},~\eqref{est:L2H12pf} and an absorption argument, we infer
  \begin{align}\label{est:lowfreqproofell}
    \norm{u}_{L^2(\Si)}^2 + \norm{\nab u}_{L^2(\Si)}^2 & \les_\AAA \norm{N(u)}_{L^2(\pr\Si)}^2. 
  \end{align}
  The Bochner identity~\eqref{eq:Bochnerintro} gives
  \begin{align}\label{eq:Bochnerproofell}
    \begin{aligned}
      \norm{\nab^2u}^2_{L^2(\Si)} + \int_{\prSi} |\Nd u|^2 + 2 \int_{\prSi} (N(u))^2 & = -\int_{\Si} \RRRic\cdot\nab u \cdot\nab u - 2 \int_\prSi N(u)\Ld u \\
                                                                                     & \quad - \int_\prSi (\th-\gd)\cdot\Nd u\cdot\Nd u - \int_{\pr\Si} (N(u))^2\left(\trth-2\right).
    \end{aligned}
  \end{align}
  Hence, combining~\eqref{est:lowfreqproofell}, \eqref{eq:Bochnerproofell}, and using~\eqref{est:prSi-3Si}, we have
  \begin{align}\label{est:almostfinalproofu}
    \begin{aligned}
      & \quad \norm{u}_{L^2(\Si)}^2 + \norm{\nab u}^2_{L^2(\Si)} + \norm{\nab^2u}^2_{L^2(\Si)} + \norm{\Nd u}_{L^2(\prSi)}^2 + \norm{N(u)}_{L^2(\pr\Si)}^2 \\
      \les_\AAA & \;  - \int_{\Si} \RRRic\cdot\nab u \cdot\nab u - 2\int_{\pr\Si}N(u)\Ld u - \int_{\pr\Si}\le(\th-\gd\ri)\cdot\Nd u\cdot\Nd u - \int_{\pr\Si} (N(u))^2\left(\trth-2\right) \\
      \les_\AAA & \; \norm{\RRRic}_{L^2(\Si)}\norm{\nab u}^2_{L^4(\Si)} + \norm{N(u)}_{H^{1/2}(\prSi)}\norm{\Ld u}_{H^{-1/2}(\pr\Si)} \\
      & + \norm{\th-\gd}_{L^4(\prSi)}\le(\norm{\Nd u}_{L^4(\prSi)}^2 + \norm{N(u)}_{L^4(\prSi)}^2\ri).
    \end{aligned}
  \end{align}
  From Corollary~\ref{cor:decompperpH12} and~\eqref{eq:defH12}, we have
  \begin{align}\label{est:nabuprooftrace}
    \norm{\Nd u}_{H^{1/2}(\prSi)} + \norm{N(u)}_{H^{1/2}(\prSi)} & \les_\AAA \norm{\nab u}_{L^2(\Si)} + \norm{\nab^2u}_{L^2(\Si)}.
  \end{align}
  Hence, using~\eqref{est:nabuprooftrace}, the Sobolev estimates~\eqref{est:L2H12pf} and~\eqref{est:sobeucl1bis} and~\eqref{est:L2RicTh}, an absorption argument in~\eqref{est:almostfinalproofu} yields
  \begin{align*}
    \norm{u}_{H^2(\Si)}^2  + \norm{u}_{H^{1/2}(\pr\Si)}^2 + \norm{\Nd u}_{H^{1/2}(\pr\Si)}^2 + \norm{N(u)}^2_{H^{1/2}(\pr\Si)} & \les_\AAA \norm{\Ld u}_{H^{-1/2}(\pr\Si)},  
  \end{align*}
  provided that $\varep$ is sufficiently small. The $L^\infty$-Sobolev estimate~\eqref{est:sobeucl2bis} then yields
  \begin{align*}
    \norm{u}_{L^\infty(\prSi)} \leq \norm{u}_{L^\infty(\Si)} \les_\AAA \norm{u}_{H^2(\Si)} \les_\AAA \norm{\Ld u}_{H^{-1/2}(\prSi)},
  \end{align*}
  which finishes the proof of~\eqref{est:H32H-12ell}.
\end{proof}

\begin{corollary}\label{cor:prSiconnected}  
  $\prSi$ has only one connected component.
\end{corollary}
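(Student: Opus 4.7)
My plan is to deduce this corollary as an immediate consequence of the elliptic estimate~\eqref{est:H32H-12ell} of Lemma~\ref{lem:ellipticboundary}, by observing that a disconnected $\prSi$ would support a non-trivial, mean-free, ``locally constant'' (hence $\Ld$-harmonic) function, which the estimate forbids. This is essentially the same idea as in the model proof given earlier in the introduction (the one with the flat $\Omega$ and harmonic function $u=\pm1,0$ on the components), but routed through the boundary elliptic estimate rather than the Bochner formula on $\Si$ directly; the hard analytical work has already been done in Lemma~\ref{lem:ellipticboundary}, so essentially all that remains here is a bookkeeping argument.

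Concretely, I will argue by contradiction. Suppose $\prSi$ has at least two connected components. Since $\prSi$ is smooth and compact, there are finitely many of them, say $X_1,\dots,X_k$ with $k\geq 2$. I then pick real numbers $a_1,\dots,a_k$, not all zero, satisfying the single linear constraint $\sum_{i=1}^k a_i|X_i|=0$; such a choice exists because we have $k\geq 2$ unknowns subject to one equation. I define $u$ on $\prSi$ to be the function equal to $a_i$ on $X_i$. Because the $X_i$ are disjoint open-and-closed subsets of the smooth manifold $\prSi$, $u$ is smooth, and, being locally constant, it satisfies $\Nd u=0$ and $\Ld u=0$ pointwise on $\prSi$.

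Having chosen $a_i$ so that $\int_\prSi u = \sum_i a_i |X_i| = 0$, the function $u$ meets the hypotheses of Lemma~\ref{lem:ellipticboundary}. Applying~\eqref{est:H32H-12ell}, I obtain
\begin{align*}
  \norm{u}_{L^\infty(\prSi)} & \les_\AAA \norm{\Ld u}_{H^{-1/2}(\prSi)} = 0,
\end{align*}
so $u\equiv 0$ on $\prSi$. This forces every $a_i$ to vanish, contradicting the choice of a non-trivial $(a_i)$, and thus $\prSi$ has only one connected component.

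The main (and essentially only) obstacle in this corollary is already overcome: it is the fact that the elliptic estimate~\eqref{est:H32H-12ell} holds \emph{without any a priori assumption on the topology of $\prSi$}, its proof using only the Bochner identity together with the near-convexity $\trth\simeq 2$ that follows from~\eqref{est:L2RicTh} and the trace and Sobolev controls established in Sections~\ref{sec:harmonicradius} and~\ref{sec:func}. Once that estimate is available, the ``zero mode'' argument above is purely algebraic; I do not anticipate any analytical subtlety in the present step.
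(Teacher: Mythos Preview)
Your proof is correct and follows essentially the same approach as the paper: the paper also argues by contradiction, defining the locally constant, mean-free function $u=\frac{1}{|Z_1|}1_{Z_1}-\frac{1}{|Z_2|}1_{Z_2}$ on two components $Z_1,Z_2$, and concludes $u=0$ from~\eqref{est:H32H-12ell}. Your version is a mild generalisation of the same idea (allowing arbitrary coefficients $a_i$ subject to the mean-zero constraint), but the content is identical.
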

\begin{proof}
  Assume that $\prSi$ has at least two connected components $Z_1$ and $Z_2$. Define the smooth function $u$ on $\prSi$ by $u = \frac{1}{|Z_1|} 1_{Z_1} - \frac{1}{|Z_2|} 1_{Z_2}$ where $1_Z$ denotes the indicator function of $Z$. Then $\Ld u =0$ and $\int_\prSi u =0$, thus, by~\eqref{est:H32H-12ell}, $u=0$ which is a contradiction.
\end{proof}
\begin{corollary}\label{cor:WPDelta}
  For all smooth scalar function $f$ with $\int_{\prSi} f = 0$, there exists a unique smooth scalar function $v$ such that $\int_\prSi v=0$ and $\Ld v=f$ on $\prSi$. 
\end{corollary}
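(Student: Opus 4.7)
The plan is standard elliptic theory on the smooth closed $2$-manifold $\prSi$, combining the a priori estimate~\eqref{est:H32H-12ell} with the Fredholm alternative (or equivalently Lax--Milgram plus regularity), where the essential geometric input is the connectedness of $\prSi$ established in Corollary~\ref{cor:prSiconnected}.

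\emph{Uniqueness} is immediate from~\eqref{est:H32H-12ell}: if $v_1,v_2$ are two mean-zero smooth solutions of $\Ld v_i=f$, then $w:=v_1-v_2$ satisfies $\Ld w=0$ and $\int_\prSi w=0$, and~\eqref{est:H32H-12ell} gives
\begin{align*}
  \norm{w}_{L^\infty(\prSi)} \les_\AAA \norm{\Ld w}_{H^{-1/2}(\prSi)} = 0,
\end{align*}
so $w\equiv 0$.

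For \emph{existence} I would argue variationally. Let $V:=\{u\in H^1(\prSi):\int_\prSi u=0\}$ and introduce the symmetric bilinear form $a(u,\phi):=\int_\prSi \Nd u\cdot\Nd \phi$. Since $\prSi$ is a connected compact smooth $2$-manifold (by Corollary~\ref{cor:prSiconnected}), the classical Poincaré inequality $\norm{u}_{L^2(\prSi)}\les\norm{\Nd u}_{L^2(\prSi)}$ holds on $V$, so $a$ is coercive on $V$. The linear form $\phi\mapsto -\int_\prSi f\phi$ is continuous on $V$ (since $f$ is smooth, hence in $L^2(\prSi)$), and Lax--Milgram produces a unique $v\in V$ with $a(v,\phi)=-\int_\prSi f\phi$ for every $\phi\in V$; testing against mean-zero test functions and then against constants (which give $0$ on both sides by the assumption $\int_\prSi f=0$) shows that $\Ld v=f$ in the sense of distributions on $\prSi$.

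Smoothness then follows from standard elliptic regularity on the closed smooth manifold $\prSi$: iterating local $L^2$-estimates for the second order elliptic operator $\Ld$ with smooth coefficients and smooth source $f$ places $v$ in $H^k(\prSi)$ for every $k\in\mathbb{N}$, hence in $C^\infty(\prSi)$. The main (indeed essentially only) delicate ingredient is the Poincaré inequality used for coercivity, which is exactly where connectedness of $\prSi$ is needed; without it, $\Ld$ would have a larger kernel than the constants and the mean-zero constraint would not suffice to single out a unique solution.
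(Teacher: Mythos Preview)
Your proof is correct and follows essentially the same route as the paper: connectedness of $\prSi$ (Corollary~\ref{cor:prSiconnected}) yields the Poincar\'e inequality, hence coercivity of the Dirichlet form on mean-zero functions, and existence follows from Lax--Milgram/Riesz together with standard elliptic regularity. The only cosmetic difference is that you isolate uniqueness via the estimate~\eqref{est:H32H-12ell}, whereas the paper leaves it implicit in the variational formulation.
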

\begin{proof}
  Since $\prSi$ is smooth, compact, and connected (by Corollary~\ref{cor:prSiconnected}), there exists a constant $C>0$ such that the Poincar\'e estimate
  \begin{align*}
    \norm{v}_{L^2(\prSi)}\leq C\norm{\Nd v}_{L^2(\prSi)}
  \end{align*}
  holds for all $v$ with $\int_\prSi v=0$ (note that the above is trivially false for manifolds with several connected components). Thus, $(v,w)\mapsto \int_\prSi \Nd v \cdot \Nd w$ is a coercive scalar product on the space of $H^1(\prSi)$-functions with vanishing average and the corollary follows classically from Riesz representation theorem and elliptic regularity.  
\end{proof}

\begin{lemma}\label{lem:H12H-12int}
  For all scalar functions $u$ with $\int_\prSi u =0$ on $\pr\Si$, we have
  \begin{align}\label{est:H12L2H-12}
    \norm{u}_{H^{1/2}(\pr\Si)} & \les_\AAA \norm{\Nd u}_{H^{-1/2}(\pr\Si)} \les_\AAA \norm{\Nd u}_{L^2(\prSi)}.
  \end{align}
\end{lemma}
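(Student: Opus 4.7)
The second inequality $\norm{\Nd u}_{H^{-1/2}(\prSi)} \les_\AAA \norm{\Nd u}_{L^2(\prSi)}$ is immediate from applying~\eqref{est:H-12L2} to the $\Si$-tangent tensor $F = \Nd u$.

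For the first inequality, the plan is to reduce matters to an integration-by-parts identity against a \emph{potential} $v$ solving a Poisson equation on $\prSi$. Given $u$ with $\int_\prSi u = 0$, Corollary~\ref{cor:WPDelta} yields $v$ on $\prSi$ with $\Ld v = u$ and $\int_\prSi v = 0$. The elliptic estimate~\eqref{est:H32H-12ell} combined with~\eqref{est:H-12L2} then gives
\begin{align*}
  \norm{\Nd v}_{H^{1/2}(\prSi)} \leq \norm{v}_{H^{3/2}(\prSi)} \les_\AAA \norm{u}_{H^{-1/2}(\prSi)} \les_\AAA \norm{u}_{L^2(\prSi)}.
\end{align*}
Since $\prSi$ is closed (Corollary~\ref{cor:prSiconnected}), integration by parts yields
\begin{align*}
  \norm{u}_{L^2(\prSi)}^2 = \int_\prSi u\, \Ld v = -\int_\prSi \Nd u \cdot \Nd v \leq \norm{\Nd u}_{H^{-1/2}(\prSi)} \norm{\Nd v}_{H^{1/2}(\prSi)} \les_\AAA \norm{\Nd u}_{H^{-1/2}(\prSi)} \norm{u}_{L^2(\prSi)},
\end{align*}
where the first inequality uses Definition~\ref{def:H-12}. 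Canceling one factor of $\norm{u}_{L^2(\prSi)}$ produces the intermediate bound $\norm{u}_{L^2(\prSi)} \les_\AAA \norm{\Nd u}_{H^{-1/2}(\prSi)}$.

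The final and hardest step is to upgrade this $L^2$ estimate to a full $H^{1/2}$ estimate. By the trace definition~\eqref{eq:defH12}, this amounts to exhibiting an extension $\tilde u$ of $u$ to $\Si$ satisfying $\norm{\tilde u}_{H^1(\Si)} \les_\AAA \norm{\Nd u}_{H^{-1/2}(\prSi)}$. The natural candidate is the harmonic extension $\Delta\tilde u = 0$, $\tilde u|_\prSi = u$; Sobolev--Poincar\'e (using $\bar{\tilde u} = 0$) and the energy identity~\eqref{eq:energyintro} then reduce matters to proving $\int_\prSi u\, N(\tilde u) \les_\AAA \norm{\Nd u}_{H^{-1/2}(\prSi)}^2$. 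Rewriting this integral using $u = \Ld v$ and tangential integration by parts on $\prSi$ converts it into $-\int_\prSi \Nd v \cdot \Nd(N\tilde u)$, so that $\norm{\Nd v}_{H^{1/2}(\prSi)}$ (controlled above) is paired against $\norm{\Nd(N\tilde u)}_{H^{-1/2}(\prSi)}$. The main obstacle is that a crude trace estimate only places $\Nd(N\tilde u)$ in $H^{-3/2}(\prSi)$, so I would bootstrap by applying Green's identity between the two harmonic functions $\tilde u$ and the harmonic extension $V$ of $v$, and then invoking a Bochner argument for $V$ in the spirit of Proposition~\ref{prop:extensionofNextra} to extract the additional $H^2(\Si)$-regularity of $V$ needed to close the bound.
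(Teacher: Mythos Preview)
Your treatment of the second inequality and your intermediate bound $\norm{u}_{L^2(\prSi)}\les_\AAA\norm{\Nd u}_{H^{-1/2}(\prSi)}$ are correct. The gap is in the final ``upgrade'' step: your proposal to handle $\int_\prSi u\,N(\tilde u)$ by substituting $u=\Ld v$ and then invoking a Green/Bochner argument for the harmonic extension $V$ of $v$ is not a proof as stated. After your integration by parts you face $\norm{\Nd(N\tilde u)}_{H^{-1/2}(\prSi)}$, which is essentially an $H^{1/2}$ bound on the Neumann data of $\tilde u$; controlling this by $\norm{\nab\tilde u}_{L^2(\Si)}$ alone would require second-order information on $\tilde u$ in the bulk that you do not yet have, and it is unclear how $H^2$-regularity of $V$ (rather than of $\tilde u$) would substitute for it. Green's identity between $\tilde u$ and $V$ yields $\int_\prSi u\,N(V)=\int_\prSi v\,N(\tilde u)$, which does not match the integral you need.

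The paper avoids this difficulty by choosing the potential differently. Instead of $\Ld v=u$, it first takes the harmonic extension of $u$ to $\Si$, notes that $\int_\prSi N(u)=0$ by Stokes, and solves $\Ld v=N(u)$ on $\prSi$. Then the energy identity gives directly
\[
\norm{u}_{L^2(\Si)}^2+\norm{\nab u}_{L^2(\Si)}^2 \;\les_\AAA\; \int_\prSi u\,N(u)=\int_\prSi u\,\Ld v=-\int_\prSi \Nd u\cdot\Nd v \;\les_\AAA\; \norm{\Nd u}_{H^{-1/2}(\prSi)}\norm{N(u)}_{H^{-1/2}(\prSi)}.
\]
The crucial elementary observation you are missing is that for the \emph{harmonic} extension one has the one-line duality bound
\[
\int_\prSi w\,N(u)=\int_\Si\nab\widetilde w\cdot\nab u\leq \norm{w}_{H^{1/2}(\prSi)}\norm{\nab u}_{L^2(\Si)},\qquad\text{hence}\qquad \norm{N(u)}_{H^{-1/2}(\prSi)}\leq\norm{\nab u}_{L^2(\Si)}.
\]
Plugging this back and absorbing closes the estimate for $\norm{u}_{H^1(\Si)}$ immediately, and~\eqref{eq:defH12} gives the $H^{1/2}(\prSi)$ bound. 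In particular the detour through $\norm{u}_{L^2(\prSi)}$ is unnecessary: with the right choice of potential, the full $H^{1/2}$ bound follows in one absorption step.
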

\begin{proof}
  Let $u$ be a scalar function with $\int_\prSi u =0$ on $\pr\Si$ and extend $u$ as an harmonic function in $\Si$. By Stokes formula, we have $\int_{\pr\Si}N(u) = \int_\Si \Delta u = 0$. Hence, by Corollary~\ref{cor:WPDelta}, there exists $v$ on $\prSi$ such that $\int_{\pr\Si}v=0$ and $\Ld v = N(u)$. Using Lemma~\ref{lem:ellipticboundary}, we have
  \begin{align}\label{est:vNuproof}
    \norm{v}_{H^{1/2}(\pr\Si)} + \norm{\Nd v}_{H^{1/2}(\pr\Si)} &\les_\AAA \norm{N(u)}_{H^{-1/2}(\pr\Si)}.
  \end{align}
  Combining the energy identity~\eqref{eq:energyintro} and the Sobolev estimate~\eqref{eq:defsobconst} for $u$ and~\eqref{est:vNuproof}, and integrating by part on $\prSi$, we have
  \begin{align}\label{est:almostfinalNduH12proof}
    \norm{u}_{L^2(\Si)}^2 + \norm{\nab u}^2_{L^2(\Si)} & \les_\AAA \int_{\prSi}u\Ld v \les_\AAA \le|\int_\prSi \Nd u \Nd v\ri| \les_\AAA \norm{\Nd u}_{H^{-1/2}(\prSi)} \norm{N(u)}_{H^{-1/2}(\prSi)}.
  \end{align}
  Now, for all functions $w$ on $\prSi$, we have by Stokes formula and~\eqref{eq:defH12}
  \begin{align*}
    \int_{\prSi}w N(u) & = \int_\Si \Div\big( \widetilde{w} \nab u \big) = \int_\Si \nab \widetilde{w} \cdot\nab u \leq \norm{w}_{H^{1/2}(\prSi)} \norm{\nab u}_{L^2(\Si)},
  \end{align*}
  where we took the infimum on all smooth extensions $\widetilde{w}$ of $w$ in $\Si$. Hence $\norm{N(u)}_{H^{-1/2}(\prSi)} \leq \norm{\nab u}_{L^2(\Si)}$. Plugged in~\eqref{est:almostfinalNduH12proof} together with an absorption argument, this yields
  \begin{align*}
    \norm{u}_{L^2(\Si)} + \norm{\nab u}_{L^2(\Si)} & \les_\AAA \norm{\Nd u}_{H^{-1/2}(\pr\Si)},
  \end{align*}
  which, using~\eqref{eq:defH12}, proves the first inequality in~\eqref{est:H12L2H-12}. The second inequality is a consequence of~\eqref{est:H-12L2}.
\end{proof}
\begin{corollary}
  We have
  \begin{align}\label{est:H12L2H-12bis}
    \norm{u}_{H^{1/2}(\prSi)} \les_\AAA \norm{\Nd u}_{H^{-1/2}(\prSi)} + \norm{u}_{L^2(\prSi)} \les_\AAA \norm{u}_{H^1(\prSi)},
  \end{align}
  for all scalar functions $u$ on $\prSi$.
\end{corollary}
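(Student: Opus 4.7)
The plan is to reduce to the mean-zero case handled by Lemma~\ref{lem:H12H-12int} by splitting off the average of $u$. Set $\bar u := \frac{1}{|\prSi|}\int_\prSi u$. Then $u-\bar u$ has vanishing mean on $\prSi$ and $\Nd(u-\bar u) = \Nd u$, so Lemma~\ref{lem:H12H-12int} directly gives
\begin{align*}
  \norm{u-\bar u}_{H^{1/2}(\prSi)} & \les_\AAA \norm{\Nd u}_{H^{-1/2}(\prSi)}.
\end{align*}
For the constant piece, the point is that $\bar u$ extends to $\Si$ as the constant function $\bar u$, which has vanishing gradient. The definition~\eqref{eq:defH12} together with Cauchy--Schwarz on $\prSi$ yields
\begin{align*}
  \norm{\bar u}_{H^{1/2}(\prSi)} \leq \bigl\Vert \bar u \bigr\Vert_{H^1(\Si)} = |\bar u|\,|\Si|^{1/2} \leq |\Si|^{1/2}|\prSi|^{-1/2}\norm{u}_{L^2(\prSi)},
\end{align*}
and Remark~\ref{rem:|prSi||Si|-1} absorbs the geometric prefactor into a constant depending only on $\AAA$. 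The first inequality in~\eqref{est:H12L2H-12bis} then follows from the triangle inequality $\norm{u}_{H^{1/2}(\prSi)} \leq \norm{u-\bar u}_{H^{1/2}(\prSi)} + \norm{\bar u}_{H^{1/2}(\prSi)}$.

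The second inequality is immediate from~\eqref{est:H-12L2}: one has $\norm{\Nd u}_{H^{-1/2}(\prSi)} \les_\AAA \norm{\Nd u}_{L^2(\prSi)}$, and both $\norm{\Nd u}_{L^2(\prSi)}$ and $\norm{u}_{L^2(\prSi)}$ are controlled by $\norm{u}_{H^1(\prSi)}$. There is no substantive obstacle in this corollary; it is pure bookkeeping to extend the Poincar\'e-type estimate of Lemma~\ref{lem:H12H-12int} to general (not necessarily mean-zero) scalar functions, with the control of the average coming for free from the fact that a constant trivially extends into $\Si$.
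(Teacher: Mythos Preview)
Your proof is correct and is exactly the natural argument the paper has in mind: the corollary is stated without proof immediately after Lemma~\ref{lem:H12H-12int}, and the intended deduction is precisely to split off the mean $\bar u$, apply the lemma to $u-\bar u$, and control the constant by extending it trivially into $\Si$. The second inequality via~\eqref{est:H-12L2} is likewise the obvious step.
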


\subsection{Elliptic estimates on $\Si$}\label{sec:technicalSi}
The estimates of this section will be used to prove Proposition~\ref{prop:nabxi3Si}. This section can be skipped at first reading.
\begin{lemma}\label{lem:highelliptic}
  Under the assumptions of Theorem~\ref{thm:main}, provided that $\varep$ is sufficiently small, we have
  \begin{align}\label{est:highelliptic}
    \begin{aligned}
      \norm{\nab^2U}^2_{L^2(\Si)} & \les_\AAA \norm{\Delta U}^2_{L^2(\Si)} + \varep\norm{\nab U}^2_{L^2(\Si)} + \varep^2\norm{U}^2_{L^2(\Si)} \\
                                  & \quad + \int_{\pr\Si}\le(\nab^aU\cdot\nab_a\nab_NU-\nab^a\nab_aU\cdot\nab_NU\ri),
    \end{aligned}
  \end{align}
  for all $\Si$-tangent $(k,l)$-tensors $U$.
\end{lemma}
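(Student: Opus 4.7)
The approach is the standard integrated Bochner argument for tensors, mirroring the proof of Proposition~\ref{prop:extensionofNextra}. Starting from the commutation formula $\Delta\nab U=\nab\Delta U+[\Delta,\nab]U$, where $[\Delta,\nab]U$ is schematically of the form $\Riem*\nab U+\nab\Riem*U$, I contract with $\nab U$, integrate over $\Si$, and apply Stokes on both sides. This yields
\begin{align*}
\int_\Si|\nab^2 U|^2 = \int_\Si|\Delta U|^2 + B - \int_\Si \nab U\cdot[\Delta,\nab]U,
\end{align*}
where $B=\int_\prSi\nab^k U\cdot\nab_N\nab_k U-\int_\prSi\Delta U\cdot\nab_N U$ is the raw boundary contribution produced by the two integrations by parts.

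The first step handles $B$. Splitting every ambient index $k$ into a tangential $a$ plus a normal $N$ (exactly as in~\eqref{eq:bdytermsintnab2uidX}), the purely $(N,N)$ contributions cancel between the two terms of $B$. What remains is, after applying the commutator identity $\nab_N\nab_a U=\nab_a\nab_N U+\Riem_{Na}*U$, the boundary integral displayed in the statement plus an error $\int_\prSi\Riem*U*\nab U$ to be absorbed later.

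The second step handles the bulk commutator. The piece $\int_\Si\Riem*(\nab U)^2$ is bounded by $\varep\norm{\nab U}^2_{L^4}$ via H\"older; the Sobolev embedding~\eqref{est:sobeucl1bis} and the interpolation $\norm{\nab U}_{L^4}\leq\norm{\nab U}_{L^6}^{3/4}\norm{\nab U}_{L^2}^{1/4}$, followed by Young's inequality, convert this into $\delta\norm{\nab^2 U}_{L^2}^2+C\varep\norm{\nab U}_{L^2}^2$. The piece $\int_\Si\nab\Riem*U*\nab U$ is integrated by parts to move $\nab$ off $\Riem$, producing (i)~$\int_\Si\Riem*U*\nab^2 U$, (ii)~another copy of $\int_\Si\Riem*(\nab U)^2$, and (iii)~a boundary term $\int_\prSi\Riem*U*\nab U$. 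Piece (i) is estimated by H\"older ($L^2\cdot L^\infty\cdot L^2$) and the $L^\infty$-Sobolev bound of Lemma~\ref{lem:Linfty}, yielding $\varep\norm{U}_{H^2}\norm{\nab^2 U}_{L^2}\les\delta\norm{\nab^2 U}_{L^2}^2+C\varep^2(\norm{U}_{L^2}^2+\norm{\nab U}_{L^2}^2)$ after absorption of the extra $\varep\norm{\nab^2 U}^2$ factor.

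The main obstacle is the remaining boundary curvature integrals $\int_\prSi\Riem*U*\nab U$ (from step 1 and from piece (iii) above), since $\Riem$ is only controlled in $L^2(\Si)$, not on $\prSi$. The plan is to reverse Stokes, rewriting each such boundary term as a bulk divergence $\int_\Si\nab^j(\Riem_{jk}U\nab^k U)$ and expanding via the product rule; the $\nab\Riem$-term that appears is integrated by parts a second time onto $U\cdot\nab U$. All resulting bulk contributions then have the form $\Riem*(\nab U)^2$ or $\Riem*U*\nab^2 U$, already treated above, and collapse into bounds of the form $\delta\norm{\nab^2 U}_{L^2}^2+C\varep^2\norm{U}_{L^2}^2+C\varep\norm{\nab U}_{L^2}^2$. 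Collecting everything and choosing $\delta$ small, then $\varep$ small depending on $\AAA$, absorbs every $\norm{\nab^2 U}_{L^2}^2$ contribution on the RHS into the LHS, producing the stated estimate.
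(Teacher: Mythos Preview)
Your approach is the same as the paper's, but you overlook a cancellation that makes your ``main obstacle'' disappear. The boundary curvature residue from step~1, obtained by commuting $\nab_N\nab_a U$ to $\nab_a\nab_N U$, is precisely $+\int_{\pr\Si}\nab U\cdot[\nab_N,\nab]U$, while the boundary term~(iii) produced when you integrate the $\nab\Riem$ bulk piece by parts is precisely $-\int_{\pr\Si}\nab U\cdot[\nab_N,\nab]U$ (note $[\nab_N,\nab_N]=0$, so the sum over the ambient index $k$ reduces to the tangential $a$). These two terms cancel exactly, and no boundary curvature integral survives. The paper makes this manifest by integrating the piece $g^{ij}\nab_i[\nab_j,\nab]U$ by parts immediately---the ``third'' application of Stokes, as in~\eqref{eq:intnab2uidX}---so that the raw boundary expression already contains the $-[\nab_N,\nab]U\cdot\nab U$ contribution and collapses cleanly to the stated integral in~\eqref{eq:bdytermsintnab2uid}.

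Your reverse-Stokes workaround for the (non-existent) leftover is, moreover, circular as written: rewriting $\int_{\pr\Si}\Riem_N\ast U\ast\nab U=\int_\Si\nab^j(\Riem_j\ast U\ast\nab U)$ reproduces a $\nab\Riem$ bulk term, and integrating that by parts in the same index returns exactly the boundary term you started from. So the argument does not close on its own terms; the fix is simply to observe the cancellation above, after which your bulk estimates~(i)--(ii) and the absorption are identical to the paper's.
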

\begin{proof}
  Arguing along the same lines as in the proof of Proposition~\ref{prop:extensionofNextra}, one has
  \begin{align}\label{eq:intnab2uid}
    \begin{aligned}
      \int_{\Si} |\nab^2U|^2 & = \int_{\Si}|\Delta U|^2 - \int_{\Si} g^{ij}[\nab_i,\nab]\nab_jU \cdot \nab U +\int_\Si g^{ij}[\nab_j,\nab]U\cdot\nab_i\nab U \\
      & \quad - \int_{\pr\Si}[\nab_N,\nab]U \cdot\nab U - \int_{\pr\Si} \Delta U \cdot\nab_NU + \int_{\pr\Si} \nab U \cdot \nab_N\nab U.
    \end{aligned}
  \end{align}
  We have 
  \begin{align}\label{eq:bdytermsintnab2uid}
    \begin{aligned}
       \BB & := \int_{\pr\Si} \le(-\Delta U \cdot\nab_NU + \nab U \cdot \nab_N\nab U - [\nab_N,\nab]U \cdot\nab U\ri) \\
        & = \int_{\pr\Si}\le(\nab_aU\cdot\nab^a\nab_NU-\nab_a\nab^aU\cdot\nab_NU\ri).
    \end{aligned}
  \end{align}
  Hence, we infer from~\eqref{eq:intnab2uid} and~\eqref{est:L2RicTh} that
  \begin{align*}
    \norm{\nab^2U}^2_{L^2(\Si)} & \les \norm{\Delta U}^2_{L^2(\Si)} + \varep\norm{\nab U}^2_{L^4(\Si)} + \varep\norm{U}_{L^\infty(\Si)}\norm{\nab^2U}_{L^2(\Si)} + \BB.
  \end{align*}
  Using the Sobolev estimates~\eqref{est:sobeucl1bis}, \eqref{est:sobeucl2bis} and an absorption argument, we obtain the desired~\eqref{est:highelliptic}.
\end{proof}

We have the following trace estimate.
\begin{lemma}
  For all $\Si$-tangent tensors $F$, we have
  \begin{align}\label{est:traceH-12N}
    \norm{\Nd\nab_NF}_{H^{-1/2}(\prSi)} & \les_\AAA \norm{\nab^2F}_{L^2(\Si)} + \varep \norm{\nab F}_{L^2(\Si)}. 
  \end{align}
\end{lemma}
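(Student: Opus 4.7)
By the duality Definition~\ref{def:H-12}, it suffices to prove that for any smooth $\Si$-tangent test tensor $G$ of the appropriate type,
\begin{align*}
\le|\int_\prSi \Nd \nab_N F \cdot G\ri| \les_\AAA \le(\norm{\nab^2 F}_{L^2(\Si)} + \varep\norm{\nab F}_{L^2(\Si)}\ri)\norm{G}_{H^{1/2}(\prSi)}.
\end{align*}
Given $G$, by the infimum definition~\eqref{eq:defH12}, I choose a smooth extension $\widetilde{G}$ of $G$ on $\Si$ with vanishing normal component $g(\widetilde{G},N)|_\prSi = 0$ and $\Vert\widetilde{G}\Vert_{H^1(\Si)} \leq 2\norm{G}_{H^{1/2}(\prSi)}$.

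The crux is converting the boundary integral to a bulk integral on $\Si$. Applying the scalar Stokes identity $\int_\prSi N(h) = \int_\Si \Delta h$ to the scalar $h := g(\widetilde{G},\nab F)$ (or its natural analog when $F$ is a tensor), combined with the boundary identity $X|_\prSi = N$ (so that $\nab_a X|_\prSi$ is tangent to $\prSi$ with components given by $\th$), the vector-field commutator $[X,\widetilde{G}](F)$ -- whose term $(\nab_{\widetilde{G}}X)(F)$ restricts to $\th(G,\Nd F)$ on $\prSi$ -- and the Bochner commutation $\Delta \nab F = \nab\Delta F + \RRRic\cdot\nab F$, I would derive an identity of the schematic form
\begin{align*}
\int_\prSi \Nd \nab_N F \cdot G & = \int_\Si \nab\widetilde{G}\cdot\nab^2 F -\int_\Si\Div\widetilde{G}\cdot\Delta F \\
& \quad +\int_\Si \RRRic\cdot\widetilde{G}\cdot\nab F +\int_\prSi(\th-\gd)\cdot G\cdot\Nd F + (\text{corrections}),
\end{align*}
in which the extension-dependent $(\nab_N\widetilde{G})\cdot\nab F$ boundary contributions arising from the two uses of Stokes (one applied directly to $h$ and one through the commutator) cancel with each other, as happens in the proof of Proposition~\ref{prop:extensionofNextra}, and the corrections involve $\nab X - g$ pieces which are $O(\varep)$ in $L^6(\Si)$ by Proposition~\ref{prop:extensionofNextra}.

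Each term is then bounded in a routine way. The first two bulk terms are controlled by $\norm{\widetilde{G}}_{H^1(\Si)}\norm{\nab^2 F}_{L^2(\Si)}$ via Cauchy--Schwarz (using $|\Delta F|\les|\nab^2 F|$ pointwise). The curvature term, using $\norm{\RRRic}_{L^2(\Si)}\les_\AAA\varep$, the Sobolev embedding~\eqref{est:sobeucl1bis} for $\widetilde{G}\in L^6$ and for $\nab F\in L^3$, is bounded by $\varep\norm{\widetilde{G}}_{H^1(\Si)}(\norm{\nab F}_{L^2(\Si)}+\norm{\nab^2 F}_{L^2(\Si)})$. The boundary $(\th-\gd)$ correction is handled by H\"older, $\norm{\th-\gd}_{L^4(\prSi)}\les_\AAA\varep$ from~\eqref{est:L2H12pf}, and the trace bound $\norm{\Nd F}_{L^2(\prSi)} \les_\AAA \norm{\nab F}_{L^2(\Si)}+\norm{\nab^2 F}_{L^2(\Si)}$ (from Corollary~\ref{cor:decompperpH12} combined with~\eqref{est:L2H12pf} and~\eqref{eq:defH12}, applied to the trace of $\nab F$). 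Taking the infimum over extensions $\widetilde{G}$ and absorbing the $\varep\norm{\nab^2 F}_{L^2(\Si)}$ cross-terms into the leading $\norm{\nab^2 F}_{L^2(\Si)}$ completes the proof. The main difficulty is choreographing the integrations by parts so that every surviving $\norm{\nab F}_{L^2(\Si)}$ contribution appears multiplied by one of the smallness factors $\norm{\RRRic}_{L^2(\Si)}$, $\norm{\th-\gd}_{H^{1/2}(\prSi)}$, or $\norm{\nab X-g}_{L^6(\Si)}$, each $O(\varep)$ under~\eqref{est:L2RicTh}; this dictates the specific choice of intermediate scalar $h=g(\widetilde{G},\nab F)$, since more direct routes -- for instance, extending $\nab_N F$ to $X(F)$ and applying Stokes with the vector $X$ -- would either introduce uncontrolled $\nab^3 F$ terms or leave an irreducible boundary contribution $\int_\prSi g(G,\Nd F)$ lacking a smallness factor.
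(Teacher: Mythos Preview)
Your core strategy matches the paper's: test against $G$, replace $G$ by its $\prSi$-tangential projection $\slashed{G}$ (so $\slashed{G}_N=0$), extend, and integrate by parts twice so that the one residual boundary term is $\int_\prSi\slashed{G}_N\cdot\Delta F=0$. What is superfluous in your write-up is the machinery surrounding the harmonic radius $X$: the commutator $[X,\widetilde{G}]$, the $(\th-\gd)\cdot G\cdot\Nd F$ boundary term, and the ``$\nab X-g$ corrections'' play no role in the paper's proof and should not appear. The paper reads $\Nd_a\nab_NF$ directly as the component $(\nab^2F)_{aN}$, so that $\int_\prSi G^a\,\Nd_a\nab_NF=\int_\prSi\widetilde{\slashed{G}}^j(\nab^2F)_{jN}$ is the boundary flux of $V_i:=\widetilde{\slashed{G}}^j\nab_j\nab_iF$; one divergence theorem, one commutation $\nab^i\nab_j\nab_iF=\nab_j\Delta F+[\nab^i,\nab_j]\nab_iF$, and one further integration by parts of the $\nab_j\Delta F$ piece give exactly your three bulk terms with nothing left over. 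In fact your own $h=g(\widetilde{G},\nab F)$ route, once the $\nab_N\widetilde{G}$ cancellation you correctly identify is carried out, reproduces that same clean identity without ever invoking $X$ --- so the $X$-detour is inconsistent with your stated choice of $h$. Had you instead intended $\Nd_a\nab_NF$ to mean the tangential derivative of the section $p\mapsto(\nab F)_{N(p)}$, you would acquire a non-small term $\int_\prSi G\cdot\Nd F$ (the $\gd$-part of the $\th$ correction) that your schematic identity omits; the paper's componentwise reading avoids this issue entirely.
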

\begin{proof}
  Let $G$ be a $\Si$-tangent tensor. Let $\slashed{G}$ be the projection of $G$ onto $\prSi$ in its first component, \emph{i.e.} such that $\slashed{G}_{N\cdots}=0$ and denote by $\widetilde{\slashed{G}}$ a smooth extension of $\slashed{G}$ on $\Si$. Using~\eqref{est:sobeucl1bis}, we have
  \begin{align*}
    \begin{aligned}
      \int_\prSi G^a\cdot\Nd_a\nab_NF & = \int_\prSi \slashed{G}^j\cdot\nab_j\nab_NF \\
      & = \int_\Si \nab^i \widetilde{\slashed{G}}^j \cdot \nab_j\nab_iF + \int_\Si \widetilde{\slashed{G}}^j \cdot [\nab^i,\nab_j] \nab_i F + \int_\Si \widetilde{\slashed{G}}^j \cdot \nab_j \Delta F \\
      & = \int_\Si \nab^i \widetilde{\slashed{G}}^j \cdot \nab_j\nab_iF + \int_\Si \widetilde{\slashed{G}}^j \cdot [\nab^i,\nab_j] \nab_i F - \int_\Si \nab_j\widetilde{\slashed{G}}^j \cdot \Delta F + \int_\prSi \underbrace{\slashed{G}_N}_{=0}\cdot\Delta F \\
      & \les_\AAA \big\Vert \widetilde{\slashed{G}} \big\Vert_{H^{1}(\Si)}\le(\norm{\nab^2F}_{L^2(\Si)} + \varep \norm{\nab F}_{L^2(\Si)}\ri).
    \end{aligned}
  \end{align*}
  Hence, using~\eqref{eq:defH12} and Corollary~\ref{cor:decompperpH12}, we obtain the desired~\eqref{est:traceH-12N}.
\end{proof}

We have the following product estimates on $\prSi$.
\begin{lemma}
  For all $\prSi$-tangent $(k,l)$-tensors $F,G$ on $\prSi$, we have
  \begin{align}\label{est:productLinftyH12}
    \norm{F\cdot G}_{H^{1/2}(\prSi)} & \les_{\AAA,k,l} \norm{F}_{H^{3/2}(\prSi)}\norm{G}_{H^{1/2}(\prSi)}.
  \end{align}
\end{lemma}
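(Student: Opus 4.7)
The plan is to exhibit a specific $\Si$-tangent extension of $F\cdot G$ to $\Si$ and estimate its $H^1(\Si)$-norm by the right-hand side; the conclusion then follows at once from the defining infimum~\eqref{eq:defH12}. Since $F$ carries one more half-derivative than $G$, I would extend $F$ to an $H^2(\Si)$-tensor $\widetilde F$ satisfying
\[
\norm{\widetilde F}_{H^2(\Si)} \les_\AAA \norm{F}_{H^{3/2}(\prSi)},
\]
and extend $G$ by an arbitrary $H^1(\Si)$-extension $\widetilde G$ with $\norm{\widetilde G}_{H^1(\Si)} \leq 2\norm{G}_{H^{1/2}(\prSi)}$ (which exists by~\eqref{eq:defH12}). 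The product $\widetilde F\cdot\widetilde G$ then restricts to $F\cdot G$ on $\prSi$.

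To construct $\widetilde F$ I would take the harmonic extension: solve $\Delta\widetilde F = 0$ on $\Si$ with $\widetilde F=F$ on $\prSi$ (viewing $F$ as an $\Si$-tangent tensor via Corollary~\ref{cor:decompperpH12}). The required $H^2$-bound is the tensor Dirichlet analogue of the elliptic estimate~\eqref{est:H2demointro} sketched in the introduction, and it is obtained from the integrated Bochner identity~\eqref{eq:Bochnerintro} applied to $\widetilde F$ exactly as in the proof of Lemma~\ref{lem:ellipticboundary}: the boundary cross-term $\int_\prSi N(\widetilde F)\,\Ld \widetilde F$ is paired against $\norm{\Ld F}_{H^{-1/2}(\prSi)} \les_\AAA \norm{\Nd F}_{H^{1/2}(\prSi)}$, the second-fundamental-form error terms $\int_\prSi (\trth-2)N(\widetilde F)^2$ and $\int_\prSi (\th-\gd)\cdot\Nd\widetilde F\cdot\Nd\widetilde F$ are controlled by~\eqref{est:L2RicTh} together with~\eqref{est:L2H12pf}, and the bulk term $\int_\Si\RRRic\cdot\nab\widetilde F\cdot\nab\widetilde F$ (as well as the tensor curvature commutators) is absorbed on the left using $\norm{\Riem}_{L^2(\Si)}\leq\varep$ and the Sobolev embedding~\eqref{est:sobeucl1bis}, exactly as in Proposition~\ref{prop:extensionofNextra}. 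Control of $\norm{\widetilde F}_{L^2(\Si)}$ and $\norm{\nab\widetilde F}_{L^2(\Si)}$ is obtained by the energy identity~\eqref{eq:energyintro} and a duality/Poincar\'e argument.

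Given these extensions, the definition~\eqref{eq:defH12} yields $\norm{F\cdot G}_{H^{1/2}(\prSi)} \leq \norm{\widetilde F\cdot\widetilde G}_{H^1(\Si)}$, which I would estimate by H\"older and Sobolev. By Lemma~\ref{lem:Linfty},
\[
\norm{\widetilde F\cdot\widetilde G}_{L^2(\Si)} \leq \norm{\widetilde F}_{L^\infty(\Si)}\norm{\widetilde G}_{L^2(\Si)} \les_\AAA \norm{\widetilde F}_{H^2(\Si)}\norm{\widetilde G}_{H^1(\Si)}.
\]
For the gradient, Leibniz plus H\"older with exponents $(3,6)$ and $(\infty,2)$ gives
\[
\norm{\nab(\widetilde F\cdot\widetilde G)}_{L^2(\Si)} \leq \norm{\nab\widetilde F}_{L^3(\Si)}\norm{\widetilde G}_{L^6(\Si)} + \norm{\widetilde F}_{L^\infty(\Si)}\norm{\nab\widetilde G}_{L^2(\Si)} \les_\AAA \norm{\widetilde F}_{H^2(\Si)}\norm{\widetilde G}_{H^1(\Si)},
\]
where $\norm{\widetilde G}_{L^6(\Si)}\les_\AAA\norm{\widetilde G}_{H^1(\Si)}$ comes from~\eqref{est:sobeucl1bis}, and $\norm{\nab\widetilde F}_{L^3(\Si)}\leq|\Si|^{1/6}\norm{\nab\widetilde F}_{L^6(\Si)}\les_\AAA\norm{\widetilde F}_{H^2(\Si)}$ from~\eqref{est:sobeucl1bis} applied to $\nab\widetilde F$. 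Taking the infimum over $\widetilde G$ yields~\eqref{est:productLinftyH12}. The main obstacle in this plan is the tensor Dirichlet $H^2$-estimate for $\widetilde F$; everything after that is a routine H\"older--Sobolev computation.
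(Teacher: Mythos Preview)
Your plan is essentially the paper's: harmonically extend $F$, obtain an $H^2(\Si)$-bound for that extension, take any $H^1$-extension of $G$, and close by H\"older--Sobolev using~\eqref{est:sobeucl1bis} and~\eqref{est:sobeucl2bis}. The ``routine'' part after the $H^2$-bound is carried out exactly as you describe.

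The one place where your sketch is thinner than the paper is the tensor Dirichlet $H^2$-estimate, and it is worth flagging what actually happens there. The paper does not reuse the scalar Bochner form~\eqref{eq:Bochnerintro} directly; instead it applies the general elliptic identity~\eqref{est:highelliptic} with $U=F$, which leaves the boundary term $\int_{\prSi}\big(\nab^aF\cdot\nab_a\nab_NF-\nab^a\nab_aF\cdot\nab_NF\big)$. For tensors one cannot simply write this as $\int_\prSi N(\widetilde F)\,\Ld\widetilde F$: converting between $\nab_a$ and $\Nd_a$, and between $\nab_NF$ and its $\prSi$-tangent projection $Y$, produces extra index-shuffling terms proportional to $\th$ (not $\th-\gd$), of the schematic form $\sum_{j=1}^k\th\cdot(\nab_NF)^{\cdots N\cdots}$. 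These are $O(1)$ rather than $O(\varep)$ and are the source of the $(k,l)$-dependence in the constant. The cross term is then controlled via the dedicated trace estimate~\eqref{est:traceH-12N}, which gives $\norm{\Nd\nab_NF}_{H^{-1/2}(\prSi)}\les_\AAA\norm{\nab^2F}_{L^2(\Si)}+\varep\norm{\nab F}_{L^2(\Si)}$ and allows absorption. Your pairing of ``$\norm{\Ld F}_{H^{-1/2}}$ against $\norm{N(\widetilde F)}_{H^{1/2}}$'' is morally the same manoeuvre, but be aware that writing it this way hides these tensor corrections; if you carry it out you will need~\eqref{est:traceH-12N} (or an equivalent) rather than the purely intrinsic scalar argument of Lemma~\ref{lem:ellipticboundary}.
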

\begin{proof}
  For simplicity we consider $F$ a $k$-cotangent tensor on $\prSi$. Estimate~\eqref{est:productLinftyH12} will follow from the definition~\eqref{eq:defH12} and Sobolev estimates~\eqref{est:sobeucl1bis}, \eqref{est:sobeucl2bis} provided that one can construct an extension $F$ of $F$ on $\Si$ such that $\norm{F}_{H^2(\Si)}\les_{\AAA,k,l}\norm{F}_{H^{3/2}(\prSi)}$. To that end, we define $F$ as the harmonic extension of $F$ on $\Si$. Let $\widetilde{F}$ be a smooth extension of $F$ on $\Si$. One has $\Delta (F-\widetilde{F}) = - \Delta \widetilde{F}$, hence contracting with $(F-\widetilde{F})$ and integrating on $\Si$, we infer that $\Vert\nab(F-\widetilde{F})\Vert_{L^2(\Si)} \leq \Vert\nab \widetilde{F}\Vert_{L^2(\Si)}$. Taking the infimum over all smooth extensions $\widetilde{F}$, we have by the definition~\eqref{eq:defH12}
  \begin{align}\label{est:FGH1}
    \norm{F}_{H^1(\Si)}  & \les \norm{F}_{H^{1/2}(\prSi)}.
  \end{align}
  Applying the formula~\eqref{est:highelliptic}, we have
  \begin{align}\label{est:basicFnab2}
    \norm{\nab^2F}^2_{L^2(\Si)} & \les_\AAA \varep \norm{F}^2_{H^1(\Si)} + \int_\prSi \nab^aF\cdot\nab_a\nab_NF - \nab^a\nab_aF\cdot\nab_NF.
  \end{align}
  Using that $F$ is $\prSi$-tangent, we have
  \begin{align*}
    \begin{aligned}
      -\int_\prSi\nab^a\nab_aF\cdot \nab_NF & = -\int_\prSi \Nd^a\Nd_aF\cdot\nab_NF + \trth|\nab_NF|^2 = \int_\prSi \Nd_aF\cdot\Nd^aY - \trth|\nab_NF|^2,
    \end{aligned}
  \end{align*}
  where $Y$ is the projection on $(T^\ast\prSi)^{\otimes k}$ of $\nab_NF$, and we note that
  \begin{align*}
    \begin{aligned}
      \Nd^aY_{a_1\cdots a_k} & = \nab^a\nab_NF_{a_1\cdots a_k} + \th^{ac}\nab_cF_{a_1\cdots a_k} - \sum_{j=1}^k \th^{a}_{a_{j}}\nab_NF_{a_1\cdots N \cdots a_k}. 
    \end{aligned}
  \end{align*}
  Plugged in~\eqref{est:basicFnab2}, using~\eqref{est:H-12L2}, \eqref{est:L2H12pf}, \eqref{est:L2RicTh} and~\eqref{est:gdH12}, we get
  \begin{align*}
    \norm{\nab^2F}^2_{L^2(\Si)} + 2\int_\prSi |\nab_NF|^2 & \les_\AAA \varep\norm{F}^2_{H^1(\Si)} + \norm{\Nd F}_{H^{1/2}(\prSi)}\le(\norm{\Nd\nab_NF}_{H^{-1/2}(\prSi)} + \norm{\Nd F}_{H^{1/2}(\prSi)}\ri) \\
                                                          & \quad + k \norm{\Nd F}_{H^{1/2}(\prSi)}\norm{\nab_NF}_{H^{1/2}(\prSi)} + \varep \norm{\nab_NF}_{H^{1/2}(\prSi)}^2.
  \end{align*}
  Using~\eqref{eq:defH12},~\eqref{est:traceH-12N},~\eqref{est:FGH1} and an absorption argument, provided that $\varep$ is sufficiently small, we get
  \begin{align*}
    \norm{\nab^2F}_{L^2(\Si)} & \les_{\AAA,k} \norm{F}_{H^{3/2}(\prSi)},
  \end{align*}
  as desired, and this finishes the proof of the lemma.
\end{proof}

\section{Uniformisation of $\pr\Si$}\label{sec:confest}
In Section~\ref{sec:func} we obtained that $\prSi$ has only one connected component. In this section, we show that $\prSi$ is conformally isometric to the Euclidean sphere $\SSS^2$ with conformal factor close to $1$.
\begin{definition}\label{def:confiso}
  We say that a smooth map $\Phi:\pr\Si \to \SSS^2$ is a \emph{conformal isomorphism (between $(\pr\Si,\gd)$ and $(\SSS^2,\gd_{\SSS})$)} if:
  \begin{itemize}
  \item The map $\Phi$ defines a diffeomorphism between $\pr\Si$ and $\SSS^2$,
  \item There exists a \emph{conformal factor}, \emph{i.e.} a map $\phi:\pr\Si\to \RRR_+^\ast$ such that
    \begin{align*}
      \phi^{-2}\gd & = \Phi^\ast\gd_{\SSS},
    \end{align*}
    where ${}^\ast$ denotes the pull-back operation.
  \end{itemize}
\end{definition}

We have the following uniformisation theorem.
\begin{proposition}[Uniformisation theorem on $\pr\Si$]\label{lem:unifthm2}
  Under the assumptions of Theorem~\ref{thm:main} and provided that $\varep$ is sufficiently small, there exists a conformal isomorphism $\Phi$ with conformal factor $\phi$ such that
  \begin{align}\label{est:assconf}
    \norm{\log\phi}_{H^{3/2}(\prSi)} + \norm{\log\phi}_{L^\infty(\prSi)} + \norm{\phi-1}_{H^{3/2}(\pr\Si)} + \norm{\phi-1}_{L^\infty(\pr\Si)} & \les_\AAA \varep.
  \end{align}
  Moreover, for all $k\geq 0$, we have the following higher regularity estimates
  \begin{align}\label{est:higherregconf}
    \norm{\log\phi}_{H^{k+3/2}(\prSi)} + \norm{\phi-1}_{H^{k+3/2}(\pr\Si)} & \les_{\AAA,k}\norm{\Riem}_{H^k(\Si)} + \norm{\th-\gd}_{H^{k+1/2}(\pr\Si)}.
  \end{align}
\end{proposition}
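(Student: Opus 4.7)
The plan is to use the Gauss equation to express the Gauss curvature $K$ of $(\prSi,\gd)$ in terms of $\Riem$ and $\th$, show that $K-1$ is small in the dual norm $H^{-1/2}(\prSi)$, and then invoke the effective uniformisation theorem of~\cite{Kla.Sze22} for nearly round $2$-spheres to produce $\Phi$ and $\phi$ with the required regularity.

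For any local orthonormal frame $(e_1,e_2)$ of $T\prSi$, Gauss equation gives
\begin{align*}
K = \Riem(e_1,e_2,e_1,e_2) + \det(\th).
\end{align*}
The contribution $\det(\th) - 1 = \frac{1}{2}((\trth)^2 - |\th|^2) - 1$ is polynomial (of degree at most $2$) in $\th - \gd$ with coefficients involving only $\gd$, so the product estimate~\eqref{est:productLinftyH12} combined with~\eqref{est:gdH12} and the hypothesis~\eqref{est:L2RicTh} yields $\norm{\det(\th) - 1}_{H^{1/2}(\prSi)} \les_\AAA \varep$. The Riemann contribution is more delicate since $\Riem$ is only bounded in $L^2(\Si)$ and admits no a priori trace on $\prSi$. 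My plan is to control it in the weaker dual norm $H^{-1/2}(\prSi)$: for any test scalar $f \in H^{1/2}(\prSi)$, I would extend $f$ to a smooth $\widetilde f$ on $\Si$ and convert $\int_\prSi \Riem(e_1,e_2,e_1,e_2) f$ into an interior integral over $\Si$ via Stokes's theorem, using the harmonic vectorfield $X$ of Section~\ref{sec:harmonicradius} (for which $X|_\prSi = N$) to introduce the normal direction. The key will be to integrate by parts twice so that the contracted second Bianchi identity $\nab^i\RRRic_{ij} = \frac{1}{2}\nab_j\Rscal$ produces a cancellation of the top-order derivatives of curvature, leaving only interior integrals quadratic in $\Riem$, $\nab X$, and $\widetilde f$. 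Using the $H^2(\Si)$ bound~\eqref{est:H2X} on $X$ together with the Sobolev estimates of Section~\ref{sec:func}, and taking the infimum over extensions $\widetilde f$ via~\eqref{eq:defH12}, I expect to obtain $\norm{K - 1}_{H^{-1/2}(\prSi)} \les_\AAA \varep$. This trace-type control of the Gauss curvature is the main obstacle.

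Once this is established, invoking (an adaptation of) the Klainerman-Szeftel uniformisation theorem~\cite{Kla.Sze22} yields the conformal isomorphism $\Phi : \prSi \to \SSS^2$ with conformal factor $\phi$ satisfying $\norm{\log\phi}_{H^{3/2}(\prSi)} + \norm{\log\phi}_{L^\infty(\prSi)} \les_\AAA \varep$. The bounds on $\phi - 1$ then follow by writing $\phi - 1 = \exp(\log\phi) - 1$, expanding the exponential, and combining the $L^\infty$-smallness of $\log\phi$ with the product estimate~\eqref{est:productLinftyH12} to handle each term of the series in $H^{3/2}(\prSi)$.

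For the higher-regularity estimate~\eqref{est:higherregconf}, the conformal factor satisfies the Liouville equation $\Ld \log\phi = \phi^{-2} - K$ on $\prSi$. Given the base case, I would bootstrap by applying higher-order analogues of the elliptic estimate~\eqref{est:H32H-12ell}, combined inductively with the product estimate~\eqref{est:productLinftyH12}, and by re-expressing $\norm{K}_{H^{k-1/2}(\prSi)}$ via the Gauss equation in terms of $\norm{\Riem}_{H^k(\Si)}$ and $\norm{\th-\gd}_{H^{k+1/2}(\prSi)}$ using a trace inequality $H^k(\Si) \hookrightarrow H^{k-1/2}(\prSi)$ on $\Riem$. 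This should close the induction and yield~\eqref{est:higherregconf}.
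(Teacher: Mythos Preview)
Your overall strategy---Gauss equation, $H^{-1/2}$ control on $K-1$ via the Bianchi identity, then uniformisation, then Liouville equation for higher regularity---matches the paper's. But you skip a step that the paper treats as essential, and your black-box invocation of \cite{Kla.Sze22} hides it.

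The result of \cite{Kla.Sze22} (Theorem~3.1 there) takes as input an $L^\infty$ bound on $K-1$, not an $H^{-1/2}$ bound. The paper bridges this gap explicitly: after establishing $\norm{K-1}_{H^{-1/2}(\prSi)}\les_\AAA\varep$ and $|\overline{K-1}|\les_\AAA\varep$, it solves $\Ld u = K-1-\overline{K-1}$, $\overline u=0$, and uses the elliptic estimate~\eqref{est:H32H-12ell} to get $\norm{u}_{L^\infty(\prSi)}\les_\AAA\varep$. The rescaled metric $\widetilde\gd=e^{2u}\gd$ then has Gauss curvature $\widetilde K=e^{-2u}(1+\overline{K-1})$, which is close to $1$ in $L^\infty$. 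Only now can Klainerman--Szeftel be applied, producing $\widetilde\phi$ with $\norm{\widetilde\phi-1}_{L^\infty}\les_\AAA\varep$; one then sets $\phi=e^{-u}\widetilde\phi$ and uses the Liouville equation $\Ld\log\phi=-(K-1)+(\phi^{-2}-1)$ together with~\eqref{est:H32H-12ell} to upgrade to $H^{3/2}$. Your phrase ``an adaptation of'' glosses over precisely this regularisation step, which is where the paper's own elliptic machinery (Lemma~\ref{lem:ellipticboundary}) earns its keep.

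Two smaller points. First, you do not mention how you know $\prSi$ is a sphere: the paper uses $\overline{K}>0$ and Gauss--Bonnet to conclude genus $0$ before invoking any uniformisation. Second, your plan to ``integrate by parts twice'' for the curvature term is more work than needed: in dimension~$3$ one has $\Riem(e_1,e_2,e_1,e_2)=-\mathrm{G}_{NN}$ for the Einstein tensor $\mathrm{G}=\RRRic-\tfrac12(\tr_g\RRRic)g$, and since $\Div\mathrm{G}=0$ a single Stokes with the harmonic vectorfield $X$ gives $\int_\prSi f\,\mathrm{G}_{NN}=\int_\Si(\mathrm{G}\cdot X\cdot\nab\widetilde f+\widetilde f\,\mathrm{G}\cdot\nab X)$ directly. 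Also, your claimed $H^{1/2}$ bound on $\det(\th)-1$ via~\eqref{est:productLinftyH12} does not quite work for the quadratic part (that estimate needs one factor in $H^{3/2}$); the paper instead places the quadratic terms in $L^2(\prSi)$ via the $H^{1/2}\hookrightarrow L^4$ embedding~\eqref{est:L2H12pf}, which suffices for the $H^{-1/2}$ conclusion.
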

\begin{proof}
  Define the Einstein tensor
  \begin{align*}
    \mathrm{G} & := \RRRic-\half (\tr_g\RRRic)\, g.
  \end{align*}
  Recall that $\mathrm{G}$ satisfies the Bianchi identity
  \begin{align}\label{eq:Bianchi}
    \Div \mathrm{G} & = 0,
  \end{align}
  and that Gauss equation reads
  \begin{align}\label{eq:Gaussconf}
    \KG & = -\mathrm{G}_{NN} +\quar \trth^2 -\half |\thh|^2 = 1 -\mathrm{G}_{NN} + (\trth-2) + \quar (\trth-2)^2 -\half |\thh|^2,
  \end{align}
  where $\KG$ denotes the Gauss curvature of $\pr\Si$.
  
  For all functions $f$ on $\pr\Si$, noting $\widetilde{f}$ an extension of $f$ in $\Si$, using Bianchi equation~\eqref{eq:Bianchi}, and the estimates~\eqref{est:maxppX}, \eqref{est:H2X} on $X$, we have
  \begin{align*}
    \int_{\pr\Si} f\mathrm{G}_{NN} & = \int_{\Si} \nab^i\left(\widetilde{f} \mathrm{G}_{Xi}\right) = \int_{\Si} \left(\mathrm{G} \cdot X \cdot \nab \widetilde{f} + \widetilde{f} \mathrm{G} \cdot \nab X\right) \les_\AAA \norm{\mathrm{G}}_{L^2(\Si)}\Vert\widetilde{f}\Vert_{H^1(\Si)}.
  \end{align*}
  Hence, taking the infimum on all extensions $\widetilde{f}$ of $f$ on $\Si$ and taking the supremum for all functions $f$ on $\pr\Si$, we infer that 
  \begin{align}\label{est:H12GNN}
    \begin{aligned}
    \norm{\mathrm{G}_{NN}}_{H^{-1/2}(\pr\Si)} & \les_\AAA \norm{\mathrm{G}}_{L^2(\Si)}.
    \end{aligned}
  \end{align}
  From Gauss equation~\eqref{eq:Gaussconf}, the estimate~\eqref{est:H12GNN}, using~\eqref{est:H-12L2}, the Sobolev estimates~\eqref{est:L2H12pf}, and~\eqref{est:L2RicTh}, we have
  \begin{align}\label{est:H12Kdesired}
    \norm{\KG-1}_{H^{-1/2}(\pr\Si)} & \les_\AAA \norm{\mathrm{G}_{NN}}_{H^{-1/2}(\prSi)} + \norm{\trth-2}_{L^2(\prSi)} + \norm{\th-\gd}^2_{L^4(\prSi)} \les_\AAA \varep.
  \end{align}

  Using Bianchi equation~\eqref{eq:Bianchi} and the estimate~\eqref{est:H2X} for $X$, we have
  \begin{align*}
    \left|\overline{\mathrm{G}_{NN}}\right| & = \frac1{|\prSi|} \left|\int_\prSi \mathrm{G}_{XN}\right| = \frac1{|\prSi|} \left|\int_{\Si} \Div\left(\mathrm{G} \cdot X\right)\right| \leq \frac1{|\prSi|} \norm{\mathrm{G}}_{L^2(\Si)} \norm{\nab X}_{L^2(\Si)} \les_\AAA \norm{\mathrm{G}}_{L^2(\Si)}.
  \end{align*}
  Using Gauss formula~\eqref{eq:Gaussconf}, estimate~\eqref{est:L2RicTh} and~\eqref{est:L2H12pf}, this yields
  \begin{align}\label{est:overK}
    \overline{\KG-1} & \les_\AAA \varep.
  \end{align}

  By Corollary~\ref{cor:WPDelta}, one can now define $u$ to be the solution of
  \begin{align*}
    \Ld u & = \KG-1 - \overline{(\KG-1)}, & \overline{u} & = 0,
  \end{align*}
  on $\prSi$. From~\eqref{est:H12Kdesired},~\eqref{est:overK} and the elliptic estimate~\eqref{est:H32H-12ell}, we infer in particular
  \begin{align}\label{est:confu}
    \norm{u}_{L^\infty(\pr\Si)} & \les_\AAA \varep. 
  \end{align}
  Defining $\widetilde{\gd} := e^{2u}\gd$, we have
  \begin{align*}
    \widetilde{\KG} & = e^{-2u}(\KG - \Ld u) = e^{-2u}(1+\overline{\KG-1}),
  \end{align*}
  where $\widetilde{\KG}$ denotes the Gauss curvature of $(\prSi,\widetilde{\gd})$. Thus, using~\eqref{est:overK} and~\eqref{est:confu}, we have
  \begin{align}\label{est:Kreg+Linf}
    \begin{aligned}
    \big\Vert\widetilde{\KG}-1\big\Vert_{L^\infty(\pr\Si)} & \les_\AAA \varep.
    \end{aligned}
  \end{align}
  Moreover, we have that~\eqref{est:overK} implies that if $\varep$ is sufficiently small depending on $\AAA$, $\overline{\KG} > 0$. Hence, by Gauss-Bonnet formula, $\prSi$ is of genus $0$, and by the classical uniformisation theorem, $\prSi$ is diffeomorphic to $\SSS^2$. Using this fact and the bound~\eqref{est:Kreg+Linf}, we can apply the result of~\cite[Theorem 3.1]{Kla.Sze22}, and there exists a conformal isomorphism $\widetilde{\Phi}$ with conformal factor $\widetilde{\phi}$ between $(\pr\Si,\widetilde{\gd})$ and $(\SSS^2,\gd_\SSS)$ such that
  \begin{align}\label{est:widetildephiLinf}
    \begin{aligned}
      \big\Vert\widetilde{\phi}-1\big\Vert_{L^\infty(\pr\Si)} & \les_\AAA\varep.
    \end{aligned}
  \end{align}
  Defining $\phi := e^{-u}\widetilde{\phi}$, $\Phi:=\widetilde{\Phi}$ is a conformal isomorphism between $(\pr\Si,\gd)$ and $(\SSS^2,\gd_{\SSS})$ with conformal factor $\phi$, and from~\eqref{est:confu} and~\eqref{est:widetildephiLinf} we have
  \begin{align}\label{est:phiLinfconf}
    \norm{\phi-1}_{L^\infty(\prSi)} & \les_\AAA\varep.
  \end{align}
  Moreover, the conformal factor $\phi$ satisfies the elliptic equation
  \begin{align}\label{eq:confKphi}
    \Ld\log\phi & = -\KG + \phi^{-2} = -(\KG-1) +(\phi^{-2}-1).
  \end{align}
  Using the estimate~\eqref{est:H12Kdesired} for $\KG-1$, and~\eqref{est:H-12L2},~\eqref{est:phiLinfconf} to estimate $\phi$, this yields $\norm{\Ld\log\phi}_{H^{-1/2}(\prSi)}\les_\AAA\varep$. Using the elliptic estimate~\eqref{est:H32H-12ell} and the fact that $\phi-1$ is uniformly small, the desired~\eqref{est:assconf} follows. The higher regularity estimates~\eqref{est:higherregconf} are obtained by taking a $\Nd$ derivative in Gauss equation~\eqref{eq:Gaussconf} and following the above steps. The details are left to the reader and this finishes the proof of the proposition. 
\end{proof}

\section{The harmonic functions $x^i$}\label{sec:prelim}
In this section, we define our candidates for the harmonic coordinates $x^i$ of Theorem~\ref{thm:main} and collect some preliminary identities.
\begin{definition}\label{def:coordsxi}
  Let $\Phi$ be a conformal isomorphism between $\prSi$ and $\SSS^2$ (see Definition~\ref{def:confiso}). We define $x^1,x^2,x^3$ to be the unique smooth functions on $\Si$ such that
  \begin{align}\label{eq:defcoordsxi}
    \begin{aligned}
      \Delta x^i = 0 ~ \text{on $\Si$}, && x^i = x^i_{\SSS}\circ\Phi~\text{on $\prSi$},
    \end{aligned}
  \end{align}
  where, on the right-hand side, $x^i_\SSS$ denote the standard Cartesian functions on $\SSS^2$.
\end{definition}
\begin{definition}\label{def:Grammatrix}
  Let $\Phi$ be a conformal isomorphism with conformal factor $\phi$ and let $x^i$ be the associated functions of Definition~\ref{def:coordsxi}. We define $\BBB$ to be the following $2$-tensor on $\Si$
  \begin{align*}
    \BBB & := \sum_{i=1}^3\nab x^i \otimes \nab x^i - g.
  \end{align*}
\end{definition}

We have the following identities for $x^i$ on the boundary $\prSi$.
\begin{lemma}\label{lem:conformalidentities}
  Let $\Phi$ be a conformal isomorphism with conformal factor $\phi$ and let $x^i$ be the associated functions of Definition~\ref{def:coordsxi}. On $\prSi$, we have
  \begin{align}\label{eq:sumxi}
    \sum_{i=1}^3 (x^i)^2 & = 1,  
  \end{align}
  and
  \begin{align}\label{eq:sumnabxi}
    \sum_{i=1}^3\Nd x^i\otimes\Nd x^i = \phi^{-2}\gd,
  \end{align}
  and, for all $1\leq i \leq 3$,
  \begin{align}\label{eq:confHessianxi}
    \begin{aligned}
      \Nd^2x^i & = -x^i\phi^{-2}\gd - 2\Nd(\log\phi)\otimesh\Nd x^i,
    \end{aligned}
  \end{align}
  with $\otimesh$ the symmetrised, traceless, tensor product.
\end{lemma}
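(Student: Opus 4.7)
The three identities are all pullback/conformal-change consequences of standard facts for the Cartesian functions $x^i_\SSS$ on the round sphere. My plan is to establish them in order, essentially reducing each one to its sphere counterpart using the fact that $\Phi:(\prSi,\widetilde{\gd}) \to (\SSS^2,\gd_\SSS)$ is an \emph{isometry}, where $\widetilde{\gd}:=\phi^{-2}\gd$, and then unwinding the conformal factor when necessary.

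For (\ref{eq:sumxi}), the identity is immediate: by (\ref{eq:defcoordsxi}), $x^i|_\prSi = x^i_\SSS\circ\Phi$, so $\sum_i (x^i)^2|_\prSi = \big(\sum_i (x^i_\SSS)^2\big)\circ\Phi = 1$ since $\Phi$ takes values in $\SSS^2\subset\RRR^3$. For (\ref{eq:sumnabxi}), I use that the isometric embedding $\SSS^2\hookrightarrow\RRR^3$ expresses the round metric on tangent vectors as $\gd_\SSS = \sum_i dx^i_\SSS \otimes dx^i_\SSS$ (this is just $|V|^2 = \sum (V\cdot e_i)^2$ for $V\in T\SSS^2$). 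Pulling back by $\Phi$, the left-hand side becomes $\Phi^\ast \gd_\SSS = \phi^{-2}\gd$ by Definition~\ref{def:confiso}, while $\Phi^\ast dx^i_\SSS = d(x^i_\SSS\circ\Phi) = d(x^i|_\prSi) = \Nd x^i$, giving (\ref{eq:sumnabxi}).

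For (\ref{eq:confHessianxi}), I would proceed in two steps. First, on $(\SSS^2,\gd_\SSS)$ the Cartesian functions satisfy the Obata-type identity
\begin{align*}
  \nab_\SSS^2 x^i_\SSS & = -x^i_\SSS\,\gd_\SSS,
\end{align*}
which is standard and follows, for instance, from writing $\Nd_\SSS x^i_\SSS = e_i - x^i_\SSS\,\nu$ (with $\nu=\sum_j x^j_\SSS\partial_j$ the outward unit normal) and applying the Gauss formula with $\theta_\SSS = \gd_\SSS$. Since $\Phi$ is an isometry from $(\prSi,\widetilde{\gd})$ to $(\SSS^2,\gd_\SSS)$, the Hessian is natural under pull-back and hence
\begin{align*}
  \widetilde{\Nd}^2 x^i & = \Phi^\ast\bigl(\nab_\SSS^2 x^i_\SSS\bigr) = -(x^i_\SSS\circ\Phi)\,\Phi^\ast\gd_\SSS = -x^i\,\phi^{-2}\gd,
\end{align*}
where $\widetilde{\Nd}$ denotes the Levi-Civita connection of $\widetilde{\gd}$.

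Second, I would apply the classical conformal transformation formula for the Hessian of a scalar. Writing $\widetilde{\gd}=e^{2\sigma}\gd$ with $\sigma=-\log\phi$, the Christoffel symbols differ by $\widetilde{\Gamma}^c_{ab} - \Gamma^c_{ab} = \delta^c_a\Nd_b\sigma + \delta^c_b\Nd_a\sigma - \gd_{ab}\Nd^c\sigma$, so for any scalar $f$,
\begin{align*}
  \widetilde{\Nd}^2 f & = \Nd^2 f - \Nd\sigma\otimes\Nd f - \Nd f\otimes\Nd\sigma + \gd(\Nd\sigma,\Nd f)\,\gd.
\end{align*}
Substituting $f=x^i$ and $\sigma=-\log\phi$, and combining with the identity derived above, I obtain
\begin{align*}
  \Nd^2 x^i & = -x^i\phi^{-2}\gd -\bigl(\Nd(\log\phi)\otimes\Nd x^i + \Nd x^i\otimes\Nd(\log\phi) - \gd(\Nd\log\phi,\Nd x^i)\gd\bigr).
\end{align*}
Since $\prSi$ is 2-dimensional, $2\alpha\otimesh\beta = \alpha\otimes\beta+\beta\otimes\alpha - \gd(\alpha,\beta)\gd$ by the definition of the symmetrised traceless product, and the parenthesised expression is exactly $2\Nd(\log\phi)\otimesh\Nd x^i$, yielding (\ref{eq:confHessianxi}).

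There is no serious obstacle: the only delicate point is keeping track of signs and conventions in the conformal change (in particular, the sign of $\sigma$ since the conformal factor appears as $\phi^{-2}$) and checking that the ``trace'' piece of the symmetric tensor product precisely matches the $\gd(\Nd\log\phi,\Nd x^i)\gd$ term produced by the Christoffel transformation.
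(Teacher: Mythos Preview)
Your proof is correct and follows essentially the same approach as the paper's: both derive (\ref{eq:sumnabxi}) by pulling back the round metric via $\Phi$, and for (\ref{eq:confHessianxi}) both combine the Obata-type identity $\nab_\SSS^2 x^i_\SSS=-x^i_\SSS\gd_\SSS$ with the conformal transformation law for the Hessian. The only cosmetic difference is that the paper states the conformal Hessian formula directly and deduces the Obata identity from $\nab^2 x^i_{\mathbb{R}}=0$ in $\mathbb{R}^3$, whereas you spell out the Christoffel computation and derive the Obata identity via the Gauss formula on $\SSS^2$; the content is the same.
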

\begin{proof}
  Let us denote by $x^i_{\mathbb{R}}$ the Cartesian coordinates of $\mathbb{R}^3$ (so that with the notations of Definition~\ref{def:coordsxi}, $x^i_{\mathbb{R}}|_{\SSS^2} = x^i_\SSS$). The first identity~\eqref{eq:sumxi} follows from the fact that $\sum_{i=1}^3(x^i_{\mathbb{R}})^2=1$ on $\SSS^2$. Since $(\nab x^i_{\mathbb{R}})_{i=1,2,3}$ is an orthonormal frame of $\mathbb{R}^3$ and since $\pr_rx^i_{\mathbb{R}} = x^i_{\mathbb{R}}$ on $\SSS^2$, we have
  \begin{align*}
    N^\flat\otimes N^\flat + \gd_\SSS = g_{\mathbb{R}} = \sum_{i=1}^3 \nab x^i_{\mathbb{R}}\otimes \nab x^i_{\mathbb{R}} & = \le(\sum_{i=1}^3(x^i_{\mathbb{R}})^2 \ri) N^\flat\otimes N^\flat + \sum_{i=1}^3\Nd x^i_{\mathbb{R}}\otimes\Nd x^i_{\mathbb{R}} \\
    & = N^\flat\otimes N^\flat + \sum_{i=1}^3\Nd x^i\otimes\Nd x^i,
  \end{align*}
  and~\eqref{eq:sumnabxi} follows using that $\gd_\SSS=\phi^{-2}\gd$. We have the conformal formula for the Hessian
  \begin{align*}
    \begin{aligned}
      \Nd^2x^i & = \Nd^{2}x^i_{\mathbb{R}} - 2\Nd(\log\phi)\otimesh\Nd x^i.
    \end{aligned}
  \end{align*}
  Using that $\nab^{2}x^i_{\mathbb{R}}=0$, we infer that $\Nd^{2}x^i_{\mathbb{R}} = -x^i_{\mathbb{R}}\gd_\SSS$, which plugged in the above formula gives~\eqref{eq:confHessianxi}.
\end{proof}

We have the following refinement of the Bochner identity~\eqref{eq:Bochnerintro}.
\begin{proposition}\label{prop:refinedBochner}
  Let $\Phi$ be a conformal isomorphism with conformal factor $\phi$ and let $x^i$ be the associated functions of Definition~\ref{def:coordsxi}. We have
  \begin{align}\label{eq:refinedBochner}
    \begin{aligned}
      \sum_{i=1}^3\norm{\nab^2 x^i}^2_{L^2(\Si)} + 2 \sum_{i=1}^3 \int_{\pr\Si} \le(N(x^i)-x^i\ri)^2 & = \EE,
    \end{aligned}
  \end{align}
  with
  \begin{align*}
    \EE & :=  \half\int_{\pr\Si}\le(\trth-2\ri)^2 - \int_{\pr\Si}|\thh|^2 - \int_{\pr\Si}\le(\phi^{-2}-1\ri)\le(\trth-2\ri) \\
        & \quad + \sum_{i=1}^3 \int_{\pr\Si}4\le(\phi^{-2}-1\ri)x^i\le(N(x^i)-x^i\ri) - \sum_{i=1}^3\int_{\pr\Si} \le(\trth-2\ri) \le(N(x^i)-x^i\ri) \le(N(x^i)+x^i\ri)\\
       & \quad  + 2\sum_{i=1}^3 \int_{\pr\Si}  \mathrm{G}_{NN}x^i \le(N(x^i)-x^i\ri) - 2\sum_{i=1}^3\int_{\Si}x^i \mathrm{G}\cdot \nab^2x^i -3\int_{\Si}\le(\RRRic-\frac{1}{3}(\tr_g\RRRic)\, g\ri)\cdot\BBB,
  \end{align*}
  where $\BBB$ is the tensor of Definition~\ref{def:Grammatrix}.
\end{proposition}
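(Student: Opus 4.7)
The plan is to sum the Bochner identity~\eqref{eq:Bochnerintro} applied to each harmonic $x^i$ (with $f=0$) and reorganise the result using: (a) the conformal identities of Lemma~\ref{lem:conformalidentities} to simplify the $\prSi$-tangential boundary terms; (b) the Bianchi identity~\eqref{eq:Bianchi}, $\Div\mathrm{G}=0$, combined with the normalisation $\sum_i (x^i)^2 = 1$ from~\eqref{eq:sumxi} to convert the bulk Ricci term into a combination of $\widehat{\RRRic}\cdot\BBB$ (where $\widehat{\RRRic}:=\RRRic-\frac{1}{3}(\tr_g\RRRic) g$), $x^i\,\mathrm{G}\cdot\nab^2 x^i$, and a scalar $\mathrm{G}_{NN}$ boundary integral; (c) the Gauss equation~\eqref{eq:Gaussconf} together with the uniformisation PDE~\eqref{eq:confKphi} to produce the perfect-square boundary terms $\half(\trth-2)^2-|\thh|^2$.

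Setting $Y^i:=N(x^i)-x^i$, I first use~\eqref{eq:sumnabxi} to compute $\sum_i \th(\Nd x^i,\Nd x^i)=\phi^{-2}\trth$ and $\sum_i |\Nd x^i|^2=2\phi^{-2}$. Writing $\Nd_a N(x^i)=\Nd_a x^i+\Nd_a Y^i$, integrating the cross term by parts on $\prSi$, and using the trace of~\eqref{eq:confHessianxi}, namely $\Ld x^i=-2x^i\phi^{-2}$ (the symmetric-traceless product has no $\gd$-trace), the right-hand side of the summed Bochner identity becomes $4\int_\prSi\phi^{-2}+4\int_\prSi \phi^{-2}\sum_i x^i Y^i$. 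Adding $2\sum_i\int_\prSi(Y^i)^2$ to both sides to match the LHS of~\eqref{eq:refinedBochner}, then substituting $\sum_i(N(x^i))^2=1+2\sum_i x^i Y^i+\sum_i(Y^i)^2$ from~\eqref{eq:sumxi} together with $\phi^{-2}=1+(\phi^{-2}-1)$ and $\trth=2+(\trth-2)$, the boundary pieces depending on $x^iY^i$ and $(Y^i)^2$ collapse exactly into the $(\trth-2)(N(x^i)-x^i)(N(x^i)+x^i)$ and $(\phi^{-2}-1)x^i(N(x^i)-x^i)$ terms of $\EE$.

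For the bulk term, I decompose $\RRRic=\widehat{\RRRic}+\frac{1}{3}(\tr_g\RRRic)g$ and use $\sum_i\nab x^i\otimes\nab x^i=g+\BBB$ to get $\sum_i\RRRic(\nab x^i,\nab x^i)=\tr_g\RRRic+\widehat{\RRRic}\cdot\BBB+\frac{1}{3}(\tr_g\RRRic)\tr_g\BBB$. In parallel, I integrate $\sum_i\int_\Si x^i\,\mathrm{G}\cdot\nab^2 x^i$ by parts twice, invoking $\Div\mathrm{G}=0$ and the consequence $\sum_i x^i\Nd^a x^i=\half\Nd^a(\sum_i(x^i)^2)=0$ of~\eqref{eq:sumxi} to kill the tangential boundary trace, which expresses it in terms of $\sum_i\mathrm{G}(\nab x^i,\nab x^i)=-\half\tr_g\RRRic+\widehat{\RRRic}\cdot\BBB-\frac{1}{6}(\tr_g\RRRic)\tr_g\BBB$ (using $\mathrm{G}=\widehat{\RRRic}-\frac{1}{6}(\tr_g\RRRic)g$) plus the boundary integral $\int_\prSi\mathrm{G}_{NN}(1+\sum_i x^i Y^i)$. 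Eliminating the scalar-curvature contributions algebraically then yields the identity $-\sum_i\int_\Si\RRRic(\nab x^i,\nab x^i)=-3\int_\Si\widehat{\RRRic}\cdot\BBB-2\sum_i\int_\Si x^i\,\mathrm{G}\cdot\nab^2 x^i+2\int_\prSi\mathrm{G}_{NN}+2\int_\prSi\mathrm{G}_{NN}\sum_i x^i Y^i$, accounting at once for the bulk terms of $\EE$ and for the boundary $\mathrm{G}_{NN}\,x^i(N(x^i)-x^i)$ contribution.

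The remaining scalar-only boundary leftover is $2\int_\prSi\mathrm{G}_{NN}+2\int_\prSi(\phi^{-2}-1)-2\int_\prSi(\trth-2)-\int_\prSi(\phi^{-2}-1)(\trth-2)$, whose last piece already matches $\EE$. Substituting Gauss equation~\eqref{eq:Gaussconf} in the expanded form $\mathrm{G}_{NN}=-\KG+1+(\trth-2)+\quar(\trth-2)^2-\half|\thh|^2$ produces the desired $\half\int_\prSi(\trth-2)^2-\int_\prSi|\thh|^2$. The main obstacle, on which the whole identity hinges, is a residual $-2\int_\prSi\KG+2\int_\prSi\phi^{-2}$: this must vanish, and it does so by integrating the uniformisation PDE~\eqref{eq:confKphi}, $\Ld\log\phi=-\KG+\phi^{-2}$, over $\prSi$ (Stokes), which gives exactly $\int_\prSi\KG=\int_\prSi\phi^{-2}$. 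The real difficulty throughout is bookkeeping: tracking how many copies of $\int_\prSi(\trth-2)$, $\int_\prSi(\phi^{-2}-1)$, and $|\prSi|$ are produced by each substitution so that the non-quadratic pieces cancel exactly and leave the clean quadratic remainder $\EE$.
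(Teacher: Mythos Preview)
Your proposal is correct and follows essentially the same approach as the paper: both start from the summed Bochner identity~\eqref{eq:Bochnerintro}, rewrite the bulk Ricci term as $-3\widehat{\RRRic}\cdot\BBB$ plus $2\sum_i\mathrm{G}\cdot\nab x^i\cdot\nab x^i$ (the paper does this via the pointwise identity $\RRRic=3\widehat{\RRRic}-2\mathrm{G}$, you via elimination of the scalar-curvature pieces, which is equivalent), integrate the $\mathrm{G}$-term by parts using Bianchi and the tangential vanishing $\sum_i x^i\Nd x^i=0$, apply the conformal identities of Lemma~\ref{lem:conformalidentities} to the boundary terms, and close the scalar leftover $2\int_\prSi\mathrm{G}_{NN}$ via Gauss~\eqref{eq:Gaussconf} and the integrated uniformisation PDE~\eqref{eq:confKphi}. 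The bookkeeping you describe matches the paper's computation~\eqref{eq:reproofBochner5}.
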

\begin{proof}
  Summing the Bochner formulas~\eqref{eq:Bochnerintro} for each harmonic functions $x^i$, we have
  \begin{align}\label{eq:reproofBochner0}
    \begin{aligned}
      \sum_{i=1}^3\int_\Si |\nab^2x^i|^2  & = - \sum_{i=1}^3\int_\Si\RRRic \cdot \nab x^i \cdot \nab x^i + 2\sum_{i=1}^3\int_\prSi \Nd^ax^i \Nd_a(N(x^i)) \\
      & \quad -\sum_{i=1}^3\int_\prSi \th^{ab}\Nd_ax^i\Nd_bx^i -\sum_{i=1}^3\int_\prSi \trth (N(x^i))^2.
    \end{aligned}
  \end{align}
  Introducing the Einstein tensor $\mathrm{G}$ and using that $\RRRic-\frac13(\tr_g\RRRic) g$ is traceless, we have
  \begin{align}\label{eq:reproofBochner1}
    \begin{aligned}
      -\sum_{i=1}^3\int_\Si \RRRic\cdot\nab x^i\cdot\nab x^i & = -3\int_\Si\le(\RRRic-\frac13(\tr_g\RRRic) g\ri)\cdot\BBB + 2 \sum_{i=1}^3 \int_\Si \mathrm{G}\cdot\nab x^i\cdot\nab x^i.
    \end{aligned}
  \end{align}
  Using Stokes formula, Bianchi equation~\eqref{eq:Bianchi} and~\eqref{eq:sumxi}, the last integral in~\eqref{eq:reproofBochner1} rewrites
  \begin{align}\label{eq:reproofBochner2}
    \begin{aligned}
       2 \sum_{i=1}^3 \int_\Si \mathrm{G}\cdot\nab x^i\cdot\nab x^i & = -2\sum_{i=1}^3\int_\Si x^i \mathrm{G}\cdot\nab^2x^i +2\sum_{i=1}^3\int_\prSi \mathrm{G}_{NN}x^iN(x^i) \\
      & = -2\sum_{i=1}^3\int_\Si x^i \mathrm{G}\cdot\nab^2x^i +2\int_\prSi \mathrm{G}_{NN} + 2\sum_{i=1}^3\int_\prSi \mathrm{G}_{NN}x^i\le(N(x^i)-x^i\ri).
    \end{aligned}
  \end{align}
  We note that using Gauss equation~\eqref{eq:Gaussconf} combined with~\eqref{eq:confKphi}, the integral of $\mathrm{G}_{NN}$ in~\eqref{eq:reproofBochner2} rewrites
  \begin{align}\label{eq:reproofBochner2Gauss}
    \begin{aligned}
      2\int_\prSi \mathrm{G}_{NN} & = - 2 \int_\prSi \phi^{-2} + \half \int_\prSi \trth^2 - \int_\prSi |\thh|^2.
    \end{aligned}
  \end{align}
  Using the conformal formula~\eqref{eq:sumnabxi}, we have
  \begin{align}\label{eq:reproofBochner3}
    \begin{aligned}
      -\sum_{i=1}^3\int_\prSi \th^{ab}\Nd_ax^i\Nd_bx^i & = -\int_\prSi \phi^{-2}\trth.
    \end{aligned}
  \end{align}
  Integrating by part and using the conformal formula~\eqref{eq:confHessianxi} (which gives $\Ld x^i = - 2\phi^{-2} x^i$ on $\prSi$), we have
  \begin{align}\label{eq:reproofBochner4}
    \begin{aligned}
      2 \sum_{i=1}^3\int_\prSi \Nd^ax^i\Nd_a(N(x^i)) & = 4\sum_{i=1}^3\int_\prSi \phi^{-2}x^iN(x^i). 
    \end{aligned}
  \end{align}
  Using~\eqref{eq:sumxi}, a direct computation gives
  \begin{align}\label{eq:reproofBochner5}
    \begin{aligned}
      &  - 2 \phi^{-2} + \half \trth^2 - |\thh|^2 - \phi^{-2}\trth + 4 \sum_{i=1}^3\phi^{-2}x^iN(x^i) -\sum_{i=1}^3\trth(N(x^i))^2 \\
      = \; & \half\le(\trth-2\ri)^2 - |\thh|^2 - \le(\phi^{-2}-1\ri)\le(\trth-2\ri) + 4 \le(\phi^{-2}-1\ri)x^i\le(N(x^i)-x^i\ri) \\
      & -2 \le(N(x^i)-x^i\ri)^2 - \le(\trth-2\ri)\le(N(x^i)-x^i\ri)\le(N(x^i)+x^i\ri),
    \end{aligned}
  \end{align}
  where we note that the first term on the last line has a negative sign and can eventually be put on the left-hand side of~\eqref{eq:reproofBochner0}. Plugging~\eqref{eq:reproofBochner1}, \eqref{eq:reproofBochner2}, \eqref{eq:reproofBochner2Gauss}, \eqref{eq:reproofBochner3}, \eqref{eq:reproofBochner4} and~\eqref{eq:reproofBochner5} in \eqref{eq:reproofBochner0}, we get the desired~\eqref{eq:refinedBochner}.
\end{proof}

\section{Energy and Bochner estimates}\label{sec:enerBoch}
This section is dedicated to showing that the refined Bochner identity~\eqref{eq:refinedBochner} and the assumptions of Theorem~\ref{thm:main} yield the following estimates.
\begin{proposition}\label{prop:refinedBochnerest}
  Assume that the hypothesis of Theorem~\ref{thm:main} hold and let $\Phi$ be a conformal isomorphism between $\pr\Si$ and $\SSS^2$ with conformal factor $\phi$ given by Proposition~\ref{lem:unifthm2} and $x^i$ be the associated functions on $\Si$ of Definition~\ref{def:coordsxi}. We have
  \begin{align}\label{est:refinedBochner}
    \sum_{i=1}^3\norm{\nab^2x^i}^2_{L^2(\Si)} + \sum_{i=1}^3\norm{N(x^i)-x^i}_{H^{1/2}(\pr\Si)}^2 + \norm{\BBB}^2_{L^2(\Si)}  & \les_\AAA \varep^2,
  \end{align}
  and
  \begin{align}
    \label{est:consrefinedBochner}
    \sum_{i=1}^3\norm{\nab x^i}_{L^2(\Si)} + \sum_{i=1}^3\norm{\nab x^i}_{L^6(\Si)} & \les_\AAA 1,
  \end{align}
  provided that $\varep$ is sufficiently small.
\end{proposition}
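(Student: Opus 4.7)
The plan is to apply the refined Bochner identity~\eqref{eq:refinedBochner} of Proposition~\ref{prop:refinedBochner}, whose LHS already contains the two positive quantities $\sum_i\norm{\nab^2 x^i}^2_{L^2(\Si)}$ and $\sum_i\norm{N(x^i)-x^i}^2_{L^2(\prSi)}$, and to bound every term of $\EE$ by $\varep^2$ plus quantities absorbable on the LHS. The systematic toolbox is: the maximum principle bound $|x^i|\leq 1$ on $\Si$ (which follows from $|x^i_\SSS\circ\Phi|\leq 1$ on $\prSi$ and $\Delta x^i=0$); Proposition~\ref{prop:extensionofNextra}, in particular $\big||\prSi|-3|\Si|\big|\les_\AAA\varep$ and $\norm{\nab X-g}_{L^6(\Si)}\les_\AAA\varep$; the uniformisation bounds of Proposition~\ref{lem:unifthm2}; the Sobolev and trace estimates of Section~\ref{sec:func}; and the $H^{-1/2}(\prSi)$-bound $\norm{\mathrm{G}_{NN}}_{H^{-1/2}(\prSi)}\les_\AAA\varep$ obtained by pairing $\mathrm{G}\cdot N$ against $H^{1/2}$-extensions and using the contracted Bianchi identity, exactly as in the proof of Proposition~\ref{lem:unifthm2}.

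\textbf{Estimating $\EE$.} The quadratic boundary pieces involving only $\th-\gd$, $\thh$ and $\phi-1$ directly contribute of order $\varep^2$ via Lemma~\ref{lem:firstfunctionalestimates}. Mixed boundary terms of the form $(\phi^{-2}-1)x^i(N(x^i)-x^i)$ and $(\trth-2)(N(x^i)-x^i)(N(x^i)+x^i)$ are Cauchy-Schwarzed into $\varep\norm{N(x^i)-x^i}_{L^2(\prSi)}$, absorbable via Remark~\ref{rem:absorption}. The bulk term $\sum_i\int_\Si x^i\mathrm{G}\cdot\nab^2 x^i$ is controlled by $\varep\norm{\nab^2 x^i}_{L^2(\Si)}$ using $|x^i|\leq 1$, also absorbable. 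This leaves the two delicate terms: the bulk $-3\int_\Si(\RRRic-\tfrac13(\tr_g\RRRic)g)\cdot\BBB$ which is only bounded by $\varep\norm{\BBB}_{L^2(\Si)}$, and the boundary $2\sum_i\int_{\prSi}\mathrm{G}_{NN}x^i(N(x^i)-x^i)$, which by $H^{-1/2}$-$H^{1/2}$ duality forces $H^{1/2}(\prSi)$-control of the factor $x^i(N(x^i)-x^i)$.

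\textbf{Main obstacle: the $\BBB$ and $H^{1/2}$ upgrades.} For the $H^{1/2}$-upgrade, I would use the natural bulk extension $v:=X(x^i)-x^i$ (which agrees with $N(x^i)-x^i$ on $\prSi$ since $X|_{\prSi}=N$) and compute $\nab v=(\nab X-g)\cdot\nab x^i+X\cdot\nab^2 x^i$, so that H\"older together with Proposition~\ref{prop:extensionofNextra} yields $\norm{\nab v}_{L^2(\Si)}\les_\AAA\varep\norm{\nab x^i}_{L^3(\Si)}+\norm{\nab^2 x^i}_{L^2(\Si)}$; the $L^2(\Si)$-smallness of $v$ then follows by a Dirichlet duality argument (solving $-\Delta w=v$ with $w|_{\prSi}=0$, using Poincar\'e and elliptic regularity). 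A similar computation on the product $x^iv$ (using $|x^i|\leq 1$) controls $\norm{x^i(N(x^i)-x^i)}_{H^{1/2}(\prSi)}$. For $\norm{\BBB}_{L^2(\Si)}$, I plan to exploit the algebraic identity
\begin{align*}
  \BBB = \tfrac12\nab^2\rho - g - \sum_{k=1}^3 x^k\nab^2 x^k, \quad \rho := \sum_{k=1}^3 (x^k)^2,
\end{align*}
which follows from Definition~\ref{def:Grammatrix} and $\Delta x^k = 0$, together with the Bochner formula applied to $u:=\rho-1$. Since $\rho\equiv 1$ on $\prSi$ the $\Nd u$ boundary terms drop out, giving $\int_\Si|\nab^2\rho|^2=\int_\Si|\Delta\rho|^2-\int_\Si\RRRic\cdot\nab\rho\cdot\nab\rho-\int_{\prSi}\trth(N(\rho))^2$. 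Using $\Delta\rho=2\tr\BBB+6$ and the boundary identity $N(\rho)=2+2\sum_k x^k(N(x^k)-x^k)$, the leading Euclidean contributions cancel in the combination $36|\Si|-8|\prSi|=12|\Si|+O_{\AAA}(\varep)$, leaving only $\varep^2$-size quantities plus terms involving $\sum\norm{\nab^2 x^i}^2_{L^2}$ and $\norm{\BBB}_{L^2}\sum\norm{\nab^2 x^i}_{L^2}$. Combined with the refined Bochner estimate, this closes the system by absorption. The technical subtlety will be producing $\norm{\BBB}_{L^2}^2$ rather than only $\norm{\tr\BBB}^2_{L^2}$ on the LHS (since $\int(\tr\BBB)^2\leq 3\norm{\BBB}^2_{L^2}$ goes in the wrong direction), which likely requires an additional integration by parts using the Bianchi-type identity $\nab^i\BBB_{ij}=\tfrac12\nab_j\tr\BBB$ and a trace/traceless decomposition.

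\textbf{Auxiliary estimates.} Once~\eqref{est:refinedBochner} is established, the estimates~\eqref{est:consrefinedBochner} follow from the energy identity $\int_\Si|\nab x^i|^2=\int_{\prSi}x^iN(x^i)$, which is bounded by $\sqrt{|\prSi|}(\norm{x^i}_{L^2(\prSi)}+\norm{N(x^i)-x^i}_{L^2(\prSi)})\les_\AAA 1$ using $|x^i|\leq 1$ and~\eqref{est:refinedBochner}, and from the Sobolev embedding~\eqref{est:sobeucl1bis} applied to $\nab x^i$.
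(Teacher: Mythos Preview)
Your overall strategy---bound every piece of $\EE$ by $\varep^2$ plus absorbable terms and close by absorption---is exactly the paper's, and your $H^{1/2}$-upgrade via the bulk extension $v=X(x^i)-x^i$ is a legitimate (if slightly more laborious) alternative to the paper's $H^{-1/2}$ duality argument for $\Nd(N(x^i)-x^i)$. The auxiliary estimates~\eqref{est:consrefinedBochner} are handled correctly.

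The genuine gap is in the $\norm{\BBB}_{L^2(\Si)}$ estimate. Your Bochner-on-$\rho$ approach does not close. After expanding $\int_\Si|\nabla^2\rho|^2$, $\int_\Si(\Delta\rho)^2$ and $\int_{\prSi}\trth(N(\rho))^2$ and cancelling as you indicate, the residual contains $8\big(|\prSi|-3|\Si|\big)-4\int_{\prSi}(\trth-2)-\int_\Si\RRRic\cdot\nabla\rho\cdot\nabla\rho$, and each of these is only $O_\AAA(\varep)$, not $O_\AAA(\varep^2)$: Proposition~\ref{prop:extensionofNextra} gives $\big||\prSi|-3|\Si|\big|\les_\AAA\varep$ and nothing better, $\int_{\prSi}(\trth-2)$ is controlled only by $|\prSi|^{1/2}\norm{\th-\gd}_{L^2(\prSi)}\les_\AAA\varep$, and $\norm{\nabla\rho}_{L^4(\Si)}^2\les_\AAA 1+\EE$. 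So you would only obtain $\norm{\BBB}_{L^2}^2\les_\AAA\varep$, which fed back into $\EE$ yields $\EE\les_\AAA\varep^{3/2}$ at best---not the stated $\varep^2$. Independently of this, the identity you derive has $\int|\BBB|^2-\int(\tr\BBB)^2$ on the left, and as you rightly note this has no sign; the Bianchi-type identity $\nabla^i\BBB_{ij}=\tfrac12\nabla_j\tr\BBB$ relates \emph{derivatives} of $\BBB$ and $\tr\BBB$, not their $L^2$ norms, so the ``additional integration by parts'' you allude to cannot repair this at the zeroth-derivative level.

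The paper's route (Lemma~\ref{lem:estL2BBB}) avoids both problems by working one derivative lower. It sets $Z:=\sum_i x^i\nab x^i=\tfrac12\nab\rho$ and compares $Z$ directly to the harmonic radius $X$: on $\prSi$ one has $Z-X=\big(\sum_i x^i(N(x^i)-x^i)\big)N$, hence $\norm{Z-X}_{H^{1/2}(\prSi)}^2\les_\AAA\varep^2+\EE+\EE^2$ by the $H^{1/2}$-upgrade. Then the \emph{energy} identity for the vectorfield $Z-X$ (not a second-order Bochner identity) together with $\norm{\Delta(Z-X)}_{L^{6/5}(\Si)}^2\les_\AAA\varep^2+\EE+\EE^2$ gives $\norm{\nab(Z-X)}_{L^2(\Si)}^2\les_\AAA\varep^2+\EE+\EE^2$. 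Since
\[
  \BBB=\nab Z-g-\sum_i x^i\nab^2x^i=(\nab Z-\nab X)+(\nab X-g)-\sum_i x^i\nab^2x^i
\]
and $\norm{\nab X-g}_{L^2(\Si)}\les_\AAA\varep$ is already supplied by Proposition~\ref{prop:extensionofNextra}, this yields $\norm{\BBB}_{L^2(\Si)}^2\les_\AAA\varep^2+\EE+\EE^2$ with the correct power of $\varep$ and without any sign issue. The key idea you are missing is to invoke the harmonic radius $X$ itself---not just its corollary~\eqref{est:prSi-3Si}---as the comparison object in the $\BBB$ estimate.
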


The proof of Proposition~\ref{prop:refinedBochnerest} is postponed to the end of this section. It relies on the following lemmas.
\begin{lemma}
  We have
  \begin{align}\label{est:maxppl}
    |x^i| \leq 1,
  \end{align}
  for all $i=1,2,3$.
\end{lemma}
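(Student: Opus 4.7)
The plan is to invoke the maximum principle. Each $x^i$ is harmonic on $\Si$ by Definition~\ref{def:coordsxi}, and on $\prSi$ it equals $x^i_\SSS \circ \Phi$, where $x^i_\SSS$ is the $i$-th Cartesian coordinate function on $\SSS^2 \subset \mathbb{R}^3$. Since $\sum_{i=1}^3 (x^i_\SSS)^2 = 1$ on $\SSS^2$, one immediately has $|x^i_\SSS| \leq 1$ on $\SSS^2$, and hence $|x^i| \leq 1$ on $\prSi$.

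The weak maximum principle applied to the harmonic function $x^i$ on the compact manifold $\Si$ then yields
\begin{align*}
  \sup_\Si x^i \;=\; \sup_{\prSi} x^i \;\leq\; 1, \qquad \inf_\Si x^i \;=\; \inf_{\prSi} x^i \;\geq\; -1,
\end{align*}
which gives $|x^i| \leq 1$ on all of $\Si$, as claimed. Alternatively, and in line with the style used in the proof of Lemma~\ref{lem:extensionofN} for~\eqref{est:maxppX}, one can observe that $\Delta\bigl((x^i)^2\bigr) = 2|\nab x^i|^2 \geq 0$ in $\Si$, so $(x^i)^2$ is subharmonic; since $(x^i)^2 \leq 1$ on $\prSi$ (by the identity~\eqref{eq:sumxi}), the maximum principle applied to $(x^i)^2$ yields $(x^i)^2 \leq 1$ on $\Si$, hence~\eqref{est:maxppl}.

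There is no real obstacle here; the only content is that the boundary values were deliberately prescribed to lie on $\SSS^2$. This is the analogue for $x^i$ of the bound~\eqref{est:maxppX} already established for the harmonic radius vectorfield $X$.
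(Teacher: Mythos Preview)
Your proof is correct and follows exactly the approach of the paper, which simply states that~\eqref{est:maxppl} is a direct application of the maximum principle to Definition~\ref{def:coordsxi}. Your write-up just spells out the details (the boundary values lie on $\SSS^2$, hence $|x^i|\leq 1$ on $\prSi$), which the paper leaves implicit.
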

\begin{proof}
  This is a direct application of the maximum principle to Definition~\ref{def:coordsxi}.
\end{proof}
\begin{lemma}\label{lem:energyestnabxi}
  We have
  \begin{align}\label{est:energyestnabxibis}
    \sum_{i=1}^3\norm{\nab x^i}^2_{L^2(\Si)} & \les_\AAA 1 + \sum_{i=1}^3\norm{N(x^i)-x^i}_{L^2(\pr\Si)} \les_\AAA 1 + \sqrt{\EE},
  \end{align}
  and
  \begin{align}\label{est:energyestnabxi}
    \sum_{i=1}^3\norm{\nab x^i}^2_{L^6(\Si)} & \les_\AAA 1 + \sum_{i=1}^3\norm{N(x^i)-x^i}_{L^2(\pr\Si)} + \sum_{i=1}^3\norm{\nab^2x^i}^2_{L^2(\Si)} \les_\AAA 1 + \EE.
  \end{align}
\end{lemma}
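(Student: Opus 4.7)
The plan is to combine the energy identity \eqref{eq:energyintro} for each harmonic function $x^i$ with the boundary identity $\sum_{i=1}^3 (x^i)^2 = 1$ from \eqref{eq:sumxi}, which will produce a ``main term'' of size $|\prSi|\les_\AAA 1$. The remaining boundary contribution is a cross-term involving $N(x^i)-x^i$, which is controlled in $L^2(\prSi)$ directly by the refined Bochner identity \eqref{eq:refinedBochner} of the previous section. Once the $L^2(\Si)$ estimate for $\nab x^i$ is in hand, the $L^6(\Si)$ estimate follows immediately from the Sobolev embedding \eqref{est:sobeucl1bis}.

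First I would apply the energy identity \eqref{eq:energyintro} with $u=x^i$ and $f=\Delta x^i=0$, obtaining $\norm{\nab x^i}_{L^2(\Si)}^2 = \int_{\prSi} x^i N(x^i)$ for each $i$. Summing over $i$ and splitting $x^i N(x^i) = (x^i)^2 + x^i(N(x^i)-x^i)$, the first piece contributes $\int_\prSi \sum_i (x^i)^2 = |\prSi|$ by \eqref{eq:sumxi}, while the cross term is bounded using the pointwise bound $|x^i|\leq 1$ from \eqref{est:maxppl} and Cauchy--Schwarz by $\sum_i \norm{N(x^i)-x^i}_{L^2(\prSi)}$, up to a factor $|\prSi|^{1/2}\les_\AAA 1$ from Remark~\ref{rem:|prSi||Si|-1}. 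Combined with the bound $|\prSi|\les_\AAA 1$ this proves the first inequality in \eqref{est:energyestnabxibis}. The second inequality in \eqref{est:energyestnabxibis} is then a direct consequence of the refined Bochner identity \eqref{eq:refinedBochner}, which, since $\norm{\nab^2 x^i}_{L^2(\Si)}^2\geq 0$, yields $2\sum_i \norm{N(x^i)-x^i}_{L^2(\prSi)}^2 \leq \EE$, and hence $\sum_i \norm{N(x^i)-x^i}_{L^2(\prSi)} \les \sqrt{\EE}$ by Cauchy--Schwarz over the index $i$.

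For the $L^6$ estimate \eqref{est:energyestnabxi}, I would apply the Sobolev inequality \eqref{est:sobeucl1bis} to $F=\nab x^i$, which gives $\norm{\nab x^i}_{L^6(\Si)}^2 \les_\AAA \norm{\nab^2 x^i}_{L^2(\Si)}^2 + \norm{\nab x^i}_{L^2(\Si)}^2$. Summing over $i$ and plugging in \eqref{est:energyestnabxibis} yields the first inequality of \eqref{est:energyestnabxi}, and invoking once more the identity \eqref{eq:refinedBochner} together with the trivial bound $\sqrt{\EE}\les 1+\EE$ produces the second inequality.

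There is no genuine analytic difficulty here; the structurally important point is that the spherical identity \eqref{eq:sumxi} ensures that the main-order boundary contribution to $\sum_i \norm{\nab x^i}_{L^2(\Si)}^2$ collapses to the harmless $|\prSi|$, so that the only ``error'' left on the boundary is precisely the quantity $N(x^i)-x^i$ already controlled by the refined Bochner formula of Proposition~\ref{prop:refinedBochner}. The only care needed is to use \eqref{eq:refinedBochner} as an \emph{identity} rather than via an absorption argument, since $\EE$ itself will only be shown to be $\les_\AAA\varep^2$ in the later proof of Proposition~\ref{prop:refinedBochnerest}.
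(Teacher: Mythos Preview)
Your proposal is correct and follows essentially the same route as the paper: the energy identity \eqref{eq:energyintro} together with $|x^i|\leq 1$ from \eqref{est:maxppl} and the definition of $\EE$ in Proposition~\ref{prop:refinedBochner} give \eqref{est:energyestnabxibis}, and then the Sobolev estimate \eqref{est:sobeucl1bis} gives \eqref{est:energyestnabxi}. The only cosmetic difference is that you invoke the spherical identity \eqref{eq:sumxi} to evaluate $\sum_i\int_{\prSi}(x^i)^2$ exactly, whereas the paper's (one-line) proof simply uses $|x^i|\leq 1$ to bound $\int_\prSi x^iN(x^i)\leq |\prSi|^{1/2}\norm{N(x^i)-x^i}_{L^2(\prSi)}+|\prSi|$; both lead to the same estimate.
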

\begin{proof}
  The first estimate follows from the energy identity~\eqref{eq:energyintro} and~\eqref{est:maxppl}, recalling the definition of $\EE$ from Proposition~\ref{prop:refinedBochner}. The second follows from the first and the Sobolev estimate~\eqref{est:sobeucl1bis}.
\end{proof}

\begin{lemma}\label{lem:H3/2Nxixi}
  We have
  \begin{align}
    \label{est:H3/2Nxixi}
    \sum_{i=1}^3\norm{\le(N(x^i)-x^i\ri)}^2_{H^{1/2}(\pr\Si)} & \les_\AAA \varep^2 + \EE,
  \end{align}
  and
  \begin{align}\label{est:H3/2Nxixibis}
    \sum_{i=1}^3\norm{x^i\le(N(x^i)-x^i\ri)}^2_{H^{1/2}(\pr\Si)} & \les_\AAA \varep^2 + \EE + \EE^2,
  \end{align}
  provided that $\varep$ is sufficiently small.
\end{lemma}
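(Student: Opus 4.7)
The strategy is to construct explicit extensions in $\Si$ of the two $\prSi$-tangent quantities and estimate their $H^1(\Si)$ norms, using the definition~\eqref{eq:defH12} of $H^{1/2}(\prSi)$ as a trace norm. The natural candidate for extending $N(x^i)-x^i$ is
\begin{align*}
  F^i := \nab_X x^i - x^i,
\end{align*}
which restricts to $N(x^i)-x^i$ on $\prSi$ since $X|_{\prSi}=N$. The plan is to show $\sum_i \norm{F^i}_{H^1(\Si)}^2 \les_\AAA \varep^2 + \EE$, which immediately yields~\eqref{est:H3/2Nxixi} from~\eqref{eq:defH12}.

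For the gradient, the decomposition
\begin{align*}
  \nab_j F^i = (\nab_j X^k - \delta_j^k)\nab_k x^i + X^k\nab_j\nab_k x^i
\end{align*}
gives pointwise $|\nab F^i| \leq |\nab X - g||\nab x^i| + |\nab^2 x^i|$ using $|X|\leq 1$~\eqref{est:maxppX}. Then H\"older's inequality $\norm{fg}_{L^2(\Si)} \leq \norm{f}_{L^3(\Si)}\norm{g}_{L^6(\Si)}$, the bound $\norm{\nab X-g}_{L^6(\Si)} \les_\AAA \varep$ from Proposition~\ref{prop:extensionofNextra}, the estimate $\sum_i \norm{\nab x^i}_{L^6(\Si)}^2 \les_\AAA 1+\EE$ from Lemma~\ref{lem:energyestnabxi}, and $\sum_i \norm{\nab^2 x^i}_{L^2(\Si)}^2 \leq \EE$ from Proposition~\ref{prop:refinedBochner} combine to give
\begin{align*}
  \sum_{i=1}^3\norm{\nab F^i}_{L^2(\Si)}^2 \les_\AAA \varep^2(1+\EE) + \EE \les_\AAA \varep^2 + \EE,
\end{align*}
using $\varep\leq 1$.

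For the $L^2(\Si)$ norm of $F^i$, I plan to establish the auxiliary Poincar\'e-trace estimate
\begin{align*}
  \norm{h}_{L^2(\Si)}^2 \les_\AAA \norm{h}_{L^2(\prSi)}^2 + \norm{\nab h}_{L^2(\Si)}^2 \qquad \text{for all } h \in H^1(\Si),
\end{align*}
by applying Stokes' formula to the vectorfield $X$: $\int_\Si h^2\Div X = -2\int_\Si hX(h) + \int_{\prSi}h^2$, using $\Div X = 3 + \tr(\nab X - g)$, the $L^6$ smallness of $\nab X - g$, the Sobolev embedding~\eqref{est:sobeucl1bis}, and absorption. Applying this to $F^i$, combined with $\sum_i\norm{N(x^i)-x^i}_{L^2(\prSi)}^2 \leq \EE/2$ from~\eqref{eq:refinedBochner} and the gradient bound, gives $\sum_i\norm{F^i}_{L^2(\Si)}^2 \les_\AAA \varep^2 + \EE$, completing~\eqref{est:H3/2Nxixi}.

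For~\eqref{est:H3/2Nxixibis}, take $G^i := x^i F^i$ as the extension of $x^i(N(x^i)-x^i)$. Since $|x^i|\leq 1$~\eqref{est:maxppl}, one has $\norm{G^i}_{L^2(\Si)}^2 \leq \norm{F^i}_{L^2(\Si)}^2 \les_\AAA \varep^2 + \EE$. The gradient decomposes as $\nab G^i = (\nab x^i)F^i + x^i\nab F^i$; the second piece contributes at most $\norm{\nab F^i}_{L^2(\Si)}^2 \les_\AAA \varep^2 + \EE$. For the first, H\"older's inequality gives $\norm{(\nab x^i)F^i}_{L^2(\Si)} \leq \norm{\nab x^i}_{L^6(\Si)} \norm{F^i}_{L^3(\Si)}$, and using $\norm{F^i}_{L^3(\Si)} \les_\AAA \norm{F^i}_{L^6(\Si)} \les_\AAA \norm{F^i}_{H^1(\Si)}$ (via~\eqref{est:sobeucl1bis}) together with the previously established bound yields $\les_\AAA (1+\EE)(\varep^2+\EE) \les_\AAA \varep^2 + \EE + \EE^2$. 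Summing gives~\eqref{est:H3/2Nxixibis}. The main obstacle is the Poincar\'e-trace estimate, which is not listed in Section~\ref{sec:func} and must be derived ad hoc from the harmonic radius vectorfield $X$; once in hand, everything else reduces to pointwise product estimates and the a priori bounds of Section~\ref{sec:func}, Proposition~\ref{prop:extensionofNextra}, Lemma~\ref{lem:energyestnabxi}, and Proposition~\ref{prop:refinedBochner}.
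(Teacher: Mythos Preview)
Your proof is correct but follows a different route from the paper for~\eqref{est:H3/2Nxixi}. The paper does \emph{not} build an explicit extension; instead it uses a duality argument: for any test vectorfield $F$ on $\prSi$, it rewrites $\int_{\prSi}F\cdot\Nd(N(x^i)-x^i)$ using $\Nd_a N(x^i) = \nab_a\nab_N x^i + \th_{ab}\Nd^b x^i$, pushes the integral into $\Si$ via Stokes, and reads off $\norm{\Nd(N(x^i)-x^i)}_{H^{-1/2}(\prSi)} \les_\AAA \norm{\nab^2x^i}_{L^2(\Si)} + \varep\norm{\nab x^i}_{L^6(\Si)}$. It then invokes the already-proved estimate~\eqref{est:H12L2H-12bis} (Lemma~\ref{lem:H12H-12int}) to pass from $\norm{\Nd(\cdot)}_{H^{-1/2}} + \norm{\cdot}_{L^2(\prSi)}$ to $\norm{\cdot}_{H^{1/2}}$. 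Your route instead constructs the concrete extension $F^i = \nab_Xx^i - x^i$ and controls its full $H^1(\Si)$ norm, which forces you to derive the auxiliary Poincar\'e--trace inequality $\norm{h}_{L^2(\Si)}^2 \les_\AAA \norm{h}_{L^2(\prSi)}^2 + \norm{\nab h}_{L^2(\Si)}^2$. That inequality is not stated in the paper but your derivation via $\int_\Si h^2\Div X$ and absorption is sound (for $\varep$ small depending on $\AAA$). Your approach is more elementary in that it sidesteps the $H^{-1/2}$ duality and Lemma~\ref{lem:H12H-12int}; the paper's approach has the advantage of staying entirely within the functional toolbox already set up in Section~\ref{sec:func}. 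For~\eqref{est:H3/2Nxixibis} the two proofs are essentially identical: both multiply an $H^1(\Si)$ extension of $N(x^i)-x^i$ by $x^i$ and use $|x^i|\leq 1$, $\norm{\nab x^i}_{L^6(\Si)}^2\les_\AAA 1+\EE$, and a product estimate.
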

\begin{proof}
  Let $F$ be any smooth $\Si$-tangent vectorfield on $\prSi$ and denote by $\Fslash$ its projection on $T\prSi$. We have
  \begin{align}\label{eq:H3/2Nxi1}
    \begin{aligned}
      \int_{\pr\Si}F\cdot\Nd\le(N(x^i)-x^i\ri) & = \int_{\pr\Si}F^a\le(\Nd_a(N(x^i)) - \th_{ab}\Nd^bx^i +(\th_{ab}-\gd_{ab})\Nd^bx^i\ri) \\
      & = \int_{\pr\Si}\Fslash\cdot\nab\nab_Nx^i + \int_{\pr\Si}F^a(\th_{ab}-\gd_{ab})\Nd^b x^i.
    \end{aligned}
  \end{align}
  Let $\widetilde{F}$ be a smooth extension of $\Fslash$ in $\Si$. Applying Stokes theorem and using the commutation formula~\eqref{eq:DeltaRicbis}, we have
  \begin{align}\label{eq:H3/2Nxi2}
    \begin{aligned}
      \int_{\pr\Si}\Fslash\cdot\nab\nab_Nx^i = \int_{\pr\Si}\widetilde{F}\cdot\nab_N\nab x^i =\int_{\Si}\Div\le(\widetilde{F}\cdot\nab^2x^i\ri) = \int_{\Si} \nab \widetilde{F} \cdot \nab^2x^i + \int_{\Si}\widetilde{F}\cdot\underbrace{\Delta\nab x^i}_{=\RRRic\cdot\nab x^i}. 
    \end{aligned}
  \end{align}
  Combining~\eqref{eq:H3/2Nxi1} and~\eqref{eq:H3/2Nxi2}, using the bounds~\eqref{est:L2RicTh} on $\RRRic$ and $\th$ and the Sobolev embedding~\eqref{est:L2H12pf} on $\prSi$ and~\eqref{est:sobeucl1bis} on $\Si$, we get
  \begin{align*}
    \begin{aligned}
      \le|\int_{\pr\Si}F\cdot\Nd\le(N(x^i)-x^i\ri)\ri| & \les_\AAA \le(\varep\norm{\nab x^i}_{L^6(\Si)} + \norm{\nab^2x^i}_{L^2(\Si)}\ri)\Vert\widetilde{F}\Vert_{H^1(\Si)} + \varep \norm{\Nd x^i}_{L^4(\prSi)} \norm{F}_{H^{1/2}(\prSi)} \\
      & \les_\AAA \le(\varep\norm{\nab x^i}_{L^6(\Si)} + \norm{\nab^2x^i}_{L^2(\Si)}\ri)\Vert F\Vert_{H^{1/2}(\prSi)},
    \end{aligned}
  \end{align*}
  where we took the infimum on all extensions $\widetilde{F}$ of $\Fslash$, used definition~\eqref{eq:defH12} and Corollary~\ref{cor:decompperpH12}. Hence,
  \begin{align*}
    \norm{\Nd(N(x^i)-x^i)}_{H^{-1/2}(\pr\Si)} & \les_\AAA \norm{\nab^2 x^i}_{L^2(\Si)} + \varep \norm{\nab x^i}_{L^6(\Si)}.
  \end{align*}
  Applying~\eqref{est:H12L2H-12bis}, using the definition of $\EE$ from Proposition~\ref{prop:refinedBochner} and~\eqref{est:energyestnabxi}, we get
  \begin{align*}
    \norm{N(x^i)-x^i}^2_{H^{1/2}(\pr\Si)} & \les_\AAA \norm{N(x^i)-x^i}^2_{L^2(\pr\Si)} + \norm{\Nd(N(x^i)-x^i)}^2_{H^{-1/2}(\pr\Si)} \les_\AAA \EE +\varep^2,
  \end{align*}
  and the desired~\eqref{est:H3/2Nxixi} follows. Taking $f$ be a smooth extension of $N(x^i)-x^i$ on $\Si$, using~\eqref{est:sobeucl1bis},~\eqref{est:maxppl} and~\eqref{est:energyestnabxi}, we have
  \begin{align}\label{est:H12xiNxi}
    \begin{aligned}
      \norm{x^i(N(x^i)-x^i)}^2_{H^{1/2}(\prSi)} & \leq \norm{x^if}^2_{H^1(\Si)} \les_\AAA \norm{x^i}^2_{L^\infty(\Si)}\norm{\nab f}^2_{H^1(\Si)} + \norm{\nab x^i}^2_{L^6(\Si)}\norm{f}^2_{L^6(\Si)} \\
      & \les_\AAA \le(1+\EE\ri)\norm{N(x^i)-x^i}^2_{H^{1/2}(\prSi)},
    \end{aligned}
  \end{align}
  where, to obtain the last line, we took the infimum over all $f$. This proves the desired~\eqref{est:H3/2Nxixibis}.
\end{proof}

\begin{lemma}\label{lem:estL2BBB}
  We have
  \begin{align}\label{est:L2BBB}
    \norm{\BBB}_{L^2(\Si)}^2 & \les_\AAA \varep^2 + \EE + \EE^2,
  \end{align}
  provided that $\varep$ is sufficiently small.
\end{lemma}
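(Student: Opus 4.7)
The approach is to derive an integral identity for $\norm{\BBB}_{L^2(\Si)}^2$ by testing $\BBB$ against the Hessian of the scalar $f := \frac{1}{2}\sum_{k=1}^3 (x^k)^2$. Since the $x^k$ are harmonic, direct differentiation gives $\nab^2 f = \BBB + g + \sum_k x^k \nab^2 x^k$, and contracting indices in $\BBB^{ij}=\sum_k \nab^i x^k \nab^j x^k - g^{ij}$ yields the Bianchi-type identity $\nab_i \BBB^{ij} = \frac{1}{2}\nab^j \tr\BBB$ (again using $\Delta x^k = 0$). Testing $\BBB$ against $\nab^2 f$ and integrating by parts twice---using also $\Delta f = \tr\BBB + 3$, which follows from $\Delta\sum_k(x^k)^2 = 2\sum_k |\nab x^k|^2$---produces the main identity
\begin{align*}
  \int_\Si |\BBB|^2 - \frac{1}{2}\int_\Si (\tr\BBB)^2 & = \frac{1}{2}\int_\Si \tr\BBB - \sum_k\int_\Si x^k\, \BBB^{ij}\nab_i\nab_j x^k \\
  & \quad + \int_{\prSi}\Bigl[\tfrac{1}{2}\BBB(N,N) - (\phi^{-2}-1)\Bigr]\sum_k x^k N(x^k),
\end{align*}
where the boundary combination comes from the identity $\tr\BBB|_{\prSi} = 2(\phi^{-2}-1) + \BBB(N,N)$, itself obtained from Lemma~\ref{lem:conformalidentities} and the orthogonal decomposition $\nab x^k = \Nd x^k + N(x^k)\,N^\flat$.

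The RHS is then estimated using the explicit boundary expression
\[
  \BBB(N,N)\big|_{\prSi} = \sum_k (N(x^k)-x^k)^2 + 2\sum_k x^k(N(x^k)-x^k),
\]
combined with \eqref{est:assconf}, Lemma~\ref{lem:H3/2Nxixi}, and the Sobolev trace estimate~\eqref{est:L2H12pf}. Expanding $\sum_k x^k N(x^k) = 1 + \sum_k x^k(N(x^k)-x^k)$ splits the boundary integral into a leading-order piece (controlled via Stokes together with the Einstein/Bianchi identity as in the proof of Proposition~\ref{lem:unifthm2}) and a quadratic remainder of size $\varep^2 + \EE + \EE^2$. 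The bulk linear term $\int_\Si \tr\BBB = |\prSi| - 3|\Si| + \sum_k \int_{\prSi} x^k(N(x^k)-x^k)$ is handled via \eqref{est:prSi-3Si} and Cauchy--Schwarz, and the cross term $\sum_k\int_\Si x^k\, \BBB^{ij}\nab_i\nab_j x^k$ by Cauchy--Schwarz together with $\sum_k\norm{\nab^2 x^k}_{L^2(\Si)}^2 \leq \EE$ and $|x^k|\leq 1$.

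The main obstacle is the term $\frac{1}{2}\int(\tr\BBB)^2$ appearing on the LHS, which cannot be absorbed into $\norm{\BBB}_{L^2}^2$ since only the pointwise bound $|\tr\BBB|^2 \leq 3|\BBB|^2$ is available. The plan is therefore to estimate $\norm{\tr\BBB}_{L^2(\Si)}^2$ by a separate scalar argument: the Bochner computation for the harmonic functions yields $\Delta \tr\BBB = 2\sum_k |\nab^2 x^k|^2 + 2\RRRic\cdot\BBB + 2\tr\RRRic$, and combining this with the $L^2(\prSi)$-bound $\norm{\tr\BBB}_{L^2(\prSi)} \les_\AAA \varep + \sqrt{\EE}$ inherited from the boundary decomposition and the Sobolev estimate~\eqref{eq:defsobconst} (with its boundary-average normalisation) applied to the scalar $\tr\BBB$, together with careful integration by parts pairing $\nab\tr\BBB$ with test expressions whose $L^2$ norms are controlled by $\EE$ and $\varep$, should yield $\norm{\tr\BBB}_{L^2(\Si)}^2 \les_\AAA \varep^2 + \EE + \EE^2 + \varep^2\norm{\BBB}_{L^2}^2$. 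Substituting back into the main identity and applying Young's inequality to absorb the $\norm{\BBB}_{L^2}\sqrt{\EE}$, $\varep\norm{\BBB}_{L^2}$, and $\frac{1}{2}\norm{\tr\BBB}_{L^2}^2$ contributions into the LHS yields the desired bound $\norm{\BBB}_{L^2(\Si)}^2 \les_\AAA \varep^2 + \EE + \EE^2$.
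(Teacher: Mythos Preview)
Your main integral identity is correct, as is the Bianchi-type relation $\nab_i\BBB^{ij}=\tfrac12\nab^j\tr\BBB$; the boundary decomposition and the treatment of $\int_\Si\tr\BBB$ via Stokes and~\eqref{est:prSi-3Si} are also fine. The gap is exactly where you flag it: the term $\tfrac12\int_\Si(\tr\BBB)^2$ cannot be absorbed into $\int_\Si|\BBB|^2$, and your plan to estimate $\norm{\tr\BBB}_{L^2(\Si)}^2$ separately does not close with the tools available at this point in the argument.

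Concretely, applying the Sobolev inequality~\eqref{eq:defsobconst} to the scalar $\tr\BBB$ requires $\norm{\nab\tr\BBB}_{L^2(\Si)}$. But $\nab\tr\BBB = 2\sum_k \nab x^k\cdot\nab^2 x^k$, and at this stage you only have $\norm{\nab x^k}_{L^6}^2\les_\AAA 1+\EE$ and $\norm{\nab^2x^k}_{L^2}^2\leq\EE$; H\"older gives control of $\nab\tr\BBB$ in $L^{3/2}$, not $L^2$. Closing in $L^2$ would need $\nab x^k\in L^\infty$ or $\nab^2x^k\in L^3$, neither of which is available before Section~\ref{sec:nab3xiSi}. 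The appeal to ``careful integration by parts pairing $\nab\tr\BBB$ with test expressions'' is not specific enough to see how this regularity deficit is avoided; the PDE $\Delta\tr\BBB=2\sum_k|\nab^2x^k|^2+2\RRRic\cdot\BBB+2\tr\RRRic$ has the same problem, since its right-hand side contains $\sum_k|\nab^2x^k|^2$, which is only controlled in $L^1(\Si)$.

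The paper's proof sidesteps this entirely. Your vectorfield $\nab f$ is exactly $Z:=\sum_k x^k\nab x^k$, and one has $\BBB=(\nab Z-g)-\sum_k x^k\nab^2x^k$. The key idea is to compare $Z$ with the \emph{harmonic radius vectorfield} $X$ of Section~\ref{sec:harmonicradius}: on $\prSi$ one has $Z-X=\big(\sum_k x^k(N(x^k)-x^k)\big)N$, so $\norm{Z-X}_{H^{1/2}(\prSi)}^2\les_\AAA\varep^2+\EE+\EE^2$ by~\eqref{est:H3/2Nxixibis}, and an energy argument (using $\Delta X=0$ and that $\Delta Z$ is controlled in $L^{6/5}$) yields $\norm{\nab(Z-X)}_{L^2}^2\les_\AAA\varep^2+\EE+\EE^2$. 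Combined with $\norm{\nab X-g}_{L^2}\les_\AAA\varep$ from Proposition~\ref{prop:extensionofNextra}, the splitting $\BBB=\nab(Z-X)+(\nab X-g)-\sum_k x^k\nab^2x^k$ gives the lemma directly, with no separate estimate on $\tr\BBB$ needed.
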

\begin{proof}
  Let us define the vectorfield
  \begin{align*}
    Z & := \sum_{i=1}^3x^i\nab x^i.
  \end{align*}
  From~\eqref{eq:sumxi} and the definition of the harmonic radius vectorfield $X$ from Definition~\ref{def:X}, we have
  \begin{align*}
    Z-X = \le(\sum_{i=1}^3x^i(N(x^i)-x^i)\ri) N
  \end{align*}
  on $\prSi$. Using Corollary~\ref{cor:decompperpH12} and~\eqref{est:H3/2Nxixibis}, we have
  \begin{align}\label{est:H12ZXprSi}
    \norm{Z-X}^2_{H^{1/2}(\prSi)} & \les_\AAA \sum_{i=1}^3\norm{x^i(N(x^i)-x^i)}^2_{H^{1/2}(\prSi)} \les_\AAA \varep^2+\EE+\EE^2.
  \end{align}
  Moreover, for all smooth tensor $F$ on $\prSi$, noting $\widetilde{F}$ a smooth extension of $F$, using Stokes formula, Sobolev estimate~\eqref{est:sobeucl1bis} and taking the infimum over all $\widetilde{F}$ and using~\eqref{eq:defH12}, we have
  \begin{align*}
    \begin{aligned}
      \le|\int_\prSi F \cdot\nab_N(Z-X)\ri| & = \le|\int_\Si\Div\le(\widetilde{F}\cdot\nab(Z-X)\ri) \ri| \\
      & \les_\AAA \norm{F}_{H^{1/2}(\prSi)} \le(\norm{Z-X}_{H^1(\Si)} + \norm{\Delta(Z-X)}_{L^{6/5}(\Si)}\ri).
    \end{aligned}
  \end{align*}
  Hence
  \begin{align}\label{est:H-12ZX}
    \begin{aligned}
      \norm{\nab_N(Z-X)}_{H^{-1/2}(\prSi)} & \les_\AAA \norm{Z-X}_{H^1(\Si)} + \norm{\Delta(Z-X)}_{L^{6/5}(\Si)}.
    \end{aligned}
  \end{align}
  Using that $X$ is harmonic,~\eqref{eq:DeltaRicbis}, and~\eqref{est:maxppl}, we have
  \begin{align}\label{est:LapXZSi}
    \begin{aligned}
      \norm{\Delta(Z-X)}^2_{L^{6/5}(\Si)} & \les \sum_{i=1}^3\le(\norm{\nab x^i \nab^2x^i}^2_{L^{6/5}(\Si)} +\norm{ x^i \RRRic \cdot\nab x^i}^2_{L^{6/5}(\Si)}\ri) \\
      & \les_\AAA \sum_{i=1}^3\norm{\nab x^i}^2_{L^6(\Si)}\le(\norm{\nab^2x^i}^2_{L^2(\Si)} + \norm{\RRRic}_{L^2(\Si)}^2\ri) \\
      & \les_\AAA \varep^2+\EE+\EE^2, 
    \end{aligned}
  \end{align}
  where the last inequality was obtained using~\eqref{est:energyestnabxi}. We have
  \begin{align*}
    \begin{aligned}
      \norm{\nab(Z-X)}^2_{L^2(\Si)} & = -\int_\Si (Z-X)\cdot\Delta(Z-X) + \int_\prSi (Z-X)\cdot\nab_N(Z-X).
    \end{aligned}
  \end{align*}
  Hence, using the Sobolev estimate~\eqref{est:SobL6L2H12} and~\eqref{est:H12ZXprSi}, \eqref{est:H-12ZX} and \eqref{est:LapXZSi} and an absorption argument, we obtain
  \begin{align*}
    \norm{Z-X}_{L^6(\Si)}^2 + \norm{\nab(Z-X)}^2_{L^2(\Si)} & \les_\AAA \norm{\Delta(Z-X)}_{L^{6/5}(\Si)} \norm{Z-X}_{L^6(\Si)} \\
                                                            & \quad + \norm{Z-X}_{H^{1/2}(\prSi)}\norm{\nab_N\le(Z-X\ri)}_{H^{-1/2}(\prSi)} + \norm{Z-X}_{H^{1/2}(\prSi)}^2 \\
                                                            & \les_\AAA \varep^2+\EE+\EE^2.
  \end{align*}
  Using \eqref{est:maxppl} and estimate~\eqref{est:H2X} on $\nab X -g$, we conclude
  \begin{align*}
    \norm{\BBB}^2_{L^2(\Si)} & \les \sum_{i=1}^3\norm{x^i\nab^2x^i}_{L^2(\Si)}^2 + \norm{\nab Z-g}^2_{L^2(\Si)} \\
                             & \les \EE + \norm{\nab(Z-X)}_{L^2(\Si)}^2 + \norm{\nab X -g}_{L^2(\Si)}^2 \\
                             & \les_\AAA \varep^2+\EE+\EE^2,
  \end{align*}
  which proves the desired~\eqref{est:L2BBB}.
\end{proof}

\begin{lemma}\label{lem:Bochnerrefined1}
  We have
  \begin{align}
    \EE & \les \varep^2+\varep\EE.\label{est:Bochnerrefined1}
  \end{align}
\end{lemma}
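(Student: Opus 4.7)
The plan is to bound each of the seven integrals in the definition of $\EE$ from Proposition~\ref{prop:refinedBochner}, using the functional estimates of Section~\ref{sec:func}, the uniformisation bounds~\eqref{est:assconf} on $\phi$, the estimates~\eqref{est:H3/2Nxixi}--\eqref{est:H3/2Nxixibis} on $N(x^i)-x^i$, the control~\eqref{est:L2BBB} of $\BBB$, and the $H^{-1/2}$ bound~\eqref{est:H12GNN} on $\mathrm{G}_{NN}$. A key preliminary observation is that, since the left-hand side of~\eqref{eq:refinedBochner} equals $\EE$ and both of its summands are non-negative, one has the crude a priori bounds $\sum_i\norm{\nab^2 x^i}_{L^2(\Si)}^2\leq\EE$ and $\sum_i\norm{N(x^i)-x^i}_{L^2(\prSi)}^2\leq\tfrac{1}{2}\EE$, which may be used freely in the estimates below.

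The purely quadratic boundary terms $\tfrac{1}{2}\int_{\prSi}(\trth-2)^2$, $-\int_{\prSi}|\thh|^2$ and $-\int_{\prSi}(\phi^{-2}-1)(\trth-2)$ are each $\les_\AAA\varep^2$, by~\eqref{est:L2RicTh}, Cauchy--Schwarz,~\eqref{est:L2H12pf}, and $\norm{\phi^{-2}-1}_{L^\infty(\prSi)}\les_\AAA\varep$ from~\eqref{est:assconf}. The term $\int_{\prSi}(\phi^{-2}-1)\,x^i(N(x^i)-x^i)$ contributes $\les_\AAA\varep\sqrt{\EE}$ by Cauchy--Schwarz, $|x^i|\leq 1$, and the a priori bound above. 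For the triple product $\int_{\prSi}(\trth-2)(N(x^i)-x^i)(N(x^i)+x^i)$, I would split $N(x^i)+x^i=(N(x^i)-x^i)+2x^i$: the quadratic piece is bounded by $\norm{\trth-2}_{L^2}\norm{N(x^i)-x^i}_{L^4}^2\les_\AAA\varep(\varep^2+\EE)$ thanks to~\eqref{est:L2H12pf} and~\eqref{est:H3/2Nxixi}, which generates a $\varep\EE$ term cleanly, while the linear cross term with $2x^i$ adds a further $\varep\sqrt{\EE}$. The integral $\int_{\prSi}\mathrm{G}_{NN}\,x^i(N(x^i)-x^i)$ is estimated by $H^{-1/2}$--$H^{1/2}$ duality, combining~\eqref{est:H12GNN} with~\eqref{est:H3/2Nxixibis}, yielding $\les_\AAA\varep\sqrt{\varep^2+\EE+\EE^2}$. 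Finally, the two bulk terms are controlled by $\bigl|\int_\Si x^i\mathrm{G}\cdot\nab^2 x^i\bigr|\les_\AAA\varep\sqrt{\EE}$ (Cauchy--Schwarz and the a priori bound) and $\bigl|\int_\Si(\RRRic-\tfrac{1}{3}(\tr_g\RRRic)g)\cdot\BBB\bigr|\les_\AAA\varep\sqrt{\varep^2+\EE+\EE^2}$ via Lemma~\ref{lem:estL2BBB}.

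Summing all contributions, and using $\sqrt{a+b+c}\leq\sqrt{a}+\sqrt{b}+\sqrt{c}$ together with the weak bootstrap $\EE\leq 1$ (valid for small $\varep$, recovered a posteriori) to fold $\varep\EE^{3/2}$ into $\varep\EE$, I arrive at an inequality of the shape
\[\EE\leq C_\AAA\bigl(\varep^2 + \varep\sqrt{\EE} + \varep\EE\bigr).\]
The $\varep\sqrt{\EE}$ term is then handled by Young's inequality $\varep\sqrt{\EE}\leq\alpha^{-1}\varep^2+\alpha\EE$, with $\alpha$ chosen as a fixed small constant depending only on $\AAA$ (for instance $\alpha=1/(2C_\AAA)$), so that the resulting $\alpha\EE$ contribution can be absorbed on the left. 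After absorption one is left precisely with the desired~\eqref{est:Bochnerrefined1}. The main technical point is the bookkeeping of Young's constants: one must ensure that the aggregate coefficient of $\EE$ produced by all Young splittings stays strictly below $1$, so that what survives on the right-hand side is genuinely a $\varep$-small multiple of $\EE$; no individual estimate is intrinsically difficult.
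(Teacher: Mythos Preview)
Your proof is correct and follows essentially the same route as the paper's: estimate each summand of $\EE$ term by term using~\eqref{est:L2RicTh}, the $H^{-1/2}$ trace bound~\eqref{est:H12GNN} for $\mathrm{G}_{NN}$, the conformal bound~\eqref{est:assconf}, Lemmas~\ref{lem:H3/2Nxixi} and~\ref{lem:estL2BBB}, and the a priori inequalities $\sum_i\norm{\nab^2x^i}_{L^2}^2\leq\EE$, $\sum_i\norm{N(x^i)-x^i}_{L^2(\prSi)}^2\leq\tfrac12\EE$, then close by Young and absorption. One small remark: your own bounds never actually produce a $\varep\EE^{3/2}$ term (expanding $\varep\sqrt{\varep^2+\EE+\EE^2}\leq\varep^2+\varep\sqrt{\EE}+\varep\EE$, and your treatment of the triple product gives only $\varep(\varep^2+\EE)+\varep\sqrt{\EE}$), so the a posteriori bootstrap $\EE\leq 1$ is unnecessary and the argument closes unconditionally.
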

\begin{proof}
  Using the assumptions~\eqref{est:L2RicTh} and~\eqref{est:L2BBB}, we have
  \begin{align}\label{est:RHSnab2xiB}
    \begin{aligned}
      -3\int_{\Si}\le(\RRRic-\frac{1}{3} (\tr_g\RRRic)\, g\ri)\cdot\BBB \les \norm{\RRRic}_{L^2(\Si)}\norm{\BBB}_{L^2(\Si)} \les\varep \sqrt{\varep^2+\EE+\EE^2}.
    \end{aligned}
  \end{align}  
  Using estimate~\eqref{est:H12GNN} for the Einstein tensor on $\pr\Si$ with~\eqref{est:L2RicTh}, and estimate~\eqref{est:H3/2Nxixibis}, we have
  \begin{align}\label{est:proofBochner1}
    \begin{aligned}
      \le|\int_{\pr\Si}  \mathrm{G}_{NN}x^i (N(x^i)-x^i)\ri| & \les_\AAA \norm{\mathrm{G}_{NN}}_{H^{-1/2}(\pr\Si)}\norm{x^i (N(x^i)-x^i)}_{H^{1/2}(\pr\Si)} \\
      & \les_\AAA \varep \sqrt{\varep^2+\EE+\EE^2}.
    \end{aligned}
  \end{align}
  Using \eqref{est:L2RicTh},~\eqref{est:maxppl}, the functional estimates~\eqref{est:L2H12pf}, the estimates~\eqref{est:assconf} for $\phi$ and the expression of $\EE$, we have
  \begin{align}\label{est:RHSnab2xiG}
    \begin{aligned}
      \le|\int_{\Si}x^i \mathrm{G}\cdot \nab^2x^i\ri| & \les \varep \norm{\nab^2x^i}_{L^2(\Si)} \les \varep \sqrt{\EE}, \\
      \int_{\pr\Si}(\trth-2)^2 + \int_{\pr\Si}|\thh|^2 & \les \norm{\th-\gd}^2_{L^2(\pr\Si)} \les_\AAA \varep^2,\\
      \le|\int_{\pr\Si}(\phi^{-2}-1)(\trth-2)\ri| & \les \norm{\th-\gd}_{L^2(\pr\Si)}\norm{\phi^{-2}-1}_{L^2(\pr\Si)} \les_\AAA \varep^2\\
      \le|\int_{\pr\Si}2x^i(N(x^i)-x^i)(1-\phi^{-2})\ri| & \les \norm{N(x^i)-x^i}_{L^2(\pr\Si)}\norm{\phi^{-2}-1}_{L^2(\pr\Si)} \les_\AAA \varep \sqrt{\EE}.
    \end{aligned}
  \end{align}
  Using additionally the trace estimate~\eqref{eq:defH12} and~\eqref{est:energyestnabxibis}, we have 
  \begin{align}\label{est:RHSnab2xiGbis}
    \begin{aligned}
       & \quad \le|\int_{\pr\Si}\le(\trth-2\ri) \le(N(x^i)-x^i\ri) \le(N(x^i)+x^i\ri)\ri| \\
      & \leq \norm{\trth-2}_{L^4(\pr\Si)} \norm{N(x^i)-x^i}_{L^2(\pr\Si)} \le(\norm{\nab x^i}_{L^4(\pr\Si)} + \norm{x^i}_{L^4(\prSi)}\ri) \\
      & \les_\AAA \varep \sqrt{\EE} \le(1+ \sqrt{1+\sqrt{\EE}}\ri).
    \end{aligned}
  \end{align}
  Examining the definition of $\EE$ from Proposition~\ref{prop:refinedBochner} and combining~\eqref{est:RHSnab2xiB}, \eqref{est:proofBochner1}, \eqref{est:RHSnab2xiG} and \eqref{est:RHSnab2xiGbis}, we get
  \begin{align*}
    \EE & \les_\AAA \varep \sqrt{\varep^2+\EE+\EE^2} + \varep\sqrt{\EE} + \varep^2 +  \varep \sqrt{\EE} \le(1+ \sqrt{1+\sqrt{\EE}}\ri) \les_\AAA \varep^2+\varep\EE,
  \end{align*}
  which proves the desired~\eqref{est:Bochnerrefined1}.
\end{proof}

\begin{proof}[Proof of Proposition~\ref{prop:refinedBochnerest}]
  By an absorption argument, we deduce from~\eqref{est:Bochnerrefined1} that $\EE\les_\AAA \varep^2$ which, using Proposition~\ref{prop:refinedBochner} and Lemmas~\ref{lem:energyestnabxi}, \ref{lem:H3/2Nxixi} and~\ref{lem:estL2BBB}, concludes the proof of the proposition.
\end{proof}

\section{Higher order estimates}\label{sec:nab3xiSi}
In this section, we prove estimates for $\nab^3x^i$. Estimates for higher derivatives will follow similarly and are left to the reader (see Remark~\ref{rem:higherorderlemmaell}).
\begin{proposition}\label{prop:nabxi3Si}
  Assume that the hypothesis of Theorem~\ref{thm:main} hold and let $\Phi$ be a conformal isomorphism between $\pr\Si$ and $\SSS^2$ with conformal factor $\phi$ given by Proposition~\ref{lem:unifthm2} and $x^i$ be the associated functions on $\Si$ of Definition~\ref{def:coordsxi}. We have
  \begin{align}\label{est:nab3xiSi}
    \norm{\nab^3x^i}_{L^2(\Si)} + \norm{\nab^2x^i}_{L^6(\Si)} & \les_\AAA \varep,
  \end{align}
  and
  \begin{align}\label{est:Linftynabxi}
    \norm{\nab x^i}_{L^\infty(\Si)} & \les_\AAA 1,
  \end{align}
  for all $1\leq i\leq 3$, and
  \begin{align}\label{est:LinftyBBB}
    \norm{\BBB}_{H^2(\Si)} + \norm{\BBB}_{L^\infty(\Si)} & \les_\AAA \varep.
  \end{align}
\end{proposition}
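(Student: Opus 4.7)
My plan is to apply the higher-order elliptic identity of Lemma~\ref{lem:highelliptic} to $U = \nab x^i$ and close a bootstrap, then recover the remaining estimates by Sobolev embedding and the Leibniz rule. Since $\Delta x^i = 0$, the commutation formula gives $\Delta \nab x^i = \RRRic\cdot \nab x^i$, so~\eqref{est:highelliptic} becomes
\begin{align*}
  \norm{\nab^3 x^i}_{L^2(\Si)}^2 & \les_\AAA \norm{\RRRic \cdot \nab x^i}_{L^2(\Si)}^2 + \varep \norm{\nab^2 x^i}_{L^2(\Si)}^2 + \varep^2 \norm{\nab x^i}_{L^2(\Si)}^2 + \mathcal{B}_i,
\end{align*}
where $\mathcal{B}_i$ denotes the associated boundary integral. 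By Proposition~\ref{prop:refinedBochnerest} and the $H^2\hookrightarrow L^\infty$ Sobolev estimate~\eqref{est:sobeucl2bis}, setting $M := \sum_i \norm{\nab^3 x^i}_{L^2(\Si)}$, one has $\norm{\nab x^i}_{L^\infty(\Si)} \les_\AAA \norm{\nab x^i}_{H^2(\Si)} \les_\AAA 1 + M$. Hence, using~\eqref{est:L2RicTh} and~\eqref{est:consrefinedBochner}, $\norm{\RRRic\cdot\nab x^i}_{L^2(\Si)} \les_\AAA \varep(1+M)$, and~\eqref{est:nab3xiSi} will follow from an absorption argument provided we can show that $\mathcal{B}_i \les_\AAA \varep^2 + \varep M$.

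Bounding $\mathcal{B}_i$ is the main obstacle. The boundary integral involves third derivatives of $x^i$ along $\prSi$, and I would unwind it by splitting $\nab^2 x^i$ along $\prSi$ into tangential--tangential, mixed, and normal--normal pieces: the tangential--tangential component is controlled by the conformal identity~\eqref{eq:confHessianxi} in terms of $x^i$, $\phi^{-2}$, and $\Nd\log\phi$, the mixed component is $\Nd(N(x^i))$, and the normal--normal component equals $-\tr_\gd(\nab^2 x^i)$ plus a $(\trth-2)N(x^i)$ correction thanks to $\Delta x^i = 0$. Substituting these decompositions produces integrands that are sums of products of the small quantities $\phi-1$, $\log\phi$, $\th-\gd$, $N(x^i)-x^i$, and $\Nd\log\phi$, whose norms are controlled in $H^{3/2}(\prSi)$ and $H^{1/2}(\prSi)$ by Propositions~\ref{lem:unifthm2} and~\ref{prop:refinedBochnerest}. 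The remaining third-derivative pieces are converted to interior integrals in the spirit of~\eqref{est:ThnabnabXprSi}, using the trace estimate~\eqref{est:traceH-12N} and integration by parts on $\Si$, while the product estimate~\eqref{est:productLinftyH12} is used to pair an $H^{3/2}$-factor with an $H^{1/2}$-factor. After this bookkeeping, $\mathcal{B}_i \les_\AAA \varep\, M + \varep^2$, and~\eqref{est:nab3xiSi} follows by absorption for $\varep$ small.

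Once $\norm{\nab^3 x^i}_{L^2(\Si)} \les_\AAA \varep$ holds, the $L^6$-bound in~\eqref{est:nab3xiSi} is immediate from the Sobolev estimate~\eqref{est:sobeucl1bis} applied to $\nab^2 x^i$ combined with $\norm{\nab^2 x^i}_{L^2(\Si)}\les_\AAA \varep$. The bound~\eqref{est:Linftynabxi} then follows from~\eqref{est:sobeucl2bis} and Proposition~\ref{prop:refinedBochnerest}, since $\norm{\nab x^i}_{H^2(\Si)}\les_\AAA 1$.

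Finally, for~\eqref{est:LinftyBBB}, the Leibniz rule yields
\begin{align*}
  |\nab\BBB| \les \sum_i |\nab^2 x^i|\,|\nab x^i|, \qquad |\nab^2\BBB| \les \sum_i \le(|\nab^3 x^i|\,|\nab x^i| + |\nab^2 x^i|^2\ri).
\end{align*}
Combining $\norm{\nab^3 x^i}_{L^2(\Si)} \les_\AAA \varep$, $\norm{\nab x^i}_{L^\infty(\Si)}\les_\AAA 1$, the interpolation $\norm{\nab^2 x^i}_{L^4(\Si)} \leq \norm{\nab^2 x^i}_{L^2(\Si)}^{1/2}\norm{\nab^2 x^i}_{L^6(\Si)}^{1/2}\les_\AAA \varep$, and the $L^2$-bound on $\BBB$ from Proposition~\ref{prop:refinedBochnerest} gives $\norm{\BBB}_{H^2(\Si)}\les_\AAA \varep$; the $L^\infty$-bound then follows from~\eqref{est:sobeucl2bis}.
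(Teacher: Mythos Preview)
Your proposal is correct and follows essentially the same route as the paper: apply Lemma~\ref{lem:highelliptic} to $U=\nab x^i$, control $\Delta\nab x^i=\RRRic\cdot\nab x^i$ via $\norm{\nab x^i}_{L^\infty}\les_\AAA 1+M$, reduce the boundary term to the tangential--tangential piece $\slashed{H}^i$ of $\nab^2x^i$ (controlled by the conformal formula~\eqref{eq:confHessianxi} as in Lemma~\ref{lem:controlslashedH}) together with a third-derivative trace handled by~\eqref{est:traceH-12N}, and close by absorption; the $L^6$, $L^\infty$, and $\BBB$ bounds then follow by Sobolev and Leibniz exactly as you wrote. The only point where the paper is more explicit is in the boundary computation (proof of Lemma~\ref{lem:ellipticnab3xi}): it shows that after rewriting $\mathcal{B}_i$ a good-sign term $\int_\prSi|\Nd\nab_Nx^i|^2$ emerges on the left, which is what allows the remaining cross terms to be estimated by $\Vert\slashed{H}\Vert_{H^{1/2}}\Vert\slashed{L}\Vert_{H^{-1/2}}+\Vert\slashed{H}\Vert_{H^{1/2}}^2+\varep\Vert\Nd\nab_Nx^i\Vert_{H^{1/2}}^2$ and absorbed---your sketch implicitly relies on this structure.
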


The proof of Proposition~\ref{prop:nabxi3Si} is obtained by standard higher order elliptic estimates for (commutations of) Laplace equation. It relies on the following two lemmas which we kept as general as possible to underline how this can be generalised to obtain higher order estimates. See also Remark~\ref{rem:higherorderlemmaell}. The proof of Proposition~\ref{prop:nabxi3Si} itself is postponed to the end of this section. 
\begin{lemma}\label{lem:ellipticnab3xi}
  Under the assumptions of Theorem~\ref{thm:main}, provided that $\varep$ is sufficiently small, we have
  \begin{align}\label{est:enercoordsxi2new}
    \begin{aligned}
      \norm{\nab^3x^i}_{L^2(\Si)}^2 & \les_{\AAA} \norm{\Delta \nab x^i}^2_{L^2(\Si)} + \varep\norm{\nab^2x^i}^2_{L^2(\Si)} + \varep^2\norm{\nab x^i}^2_{L^2(\Si)} + \big\Vert\slashed{H}^i\big\Vert^2_{H^{1/2}(\prSi)},
    \end{aligned}
  \end{align}
  for all $1\leq i\leq 3$ and where $\slashed{H}^i$ is the projection on $\le(T^\ast(\prSi)\ri)^{\otimes2}$ of $\nab^2x^i$.
\end{lemma}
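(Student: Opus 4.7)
The plan is to apply Lemma~\ref{lem:highelliptic} to the $1$-form $U := \nab x^i$. This immediately produces
\begin{align*}
  \norm{\nab^3 x^i}_{L^2(\Si)}^2 & \les_\AAA \norm{\Delta \nab x^i}_{L^2(\Si)}^2 + \varep\,\norm{\nab^2 x^i}_{L^2(\Si)}^2 + \varep^2\norm{\nab x^i}_{L^2(\Si)}^2 + \mathcal{B}^i,
\end{align*}
where $\mathcal{B}^i := \int_{\prSi}\bigl(\nab^a \nab^k x^i \cdot \nab_a \nab_N \nab_k x^i - \nab^a \nab_a \nab^k x^i \cdot \nab_N \nab_k x^i\bigr)$. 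All of the work reduces to controlling $\mathcal{B}^i$ by $\norm{\slashed{H}^i}_{H^{1/2}(\prSi)}^2$ modulo $\varep$-small or absorbable terms.

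To do so, I would decompose each occurrence of $\nab^2 x^i$ restricted to $\prSi$ according to its tangent-tangent, tangent-normal, and normal-normal components. The Gauss--Weingarten formulas identify the tangent-tangent part with $\slashed{H}^i$, express the tangent-normal part as a tangential gradient of $N(x^i)$ corrected by a Weingarten term in $\th$, and, combined with $\Delta x^i = 0$, identify the normal-normal part with $-\gd^{ab}\slashed{H}^i_{ab} - \trth\,N(x^i)$. After substituting these identities into $\mathcal{B}^i$, integrating by parts along $\prSi$, and commuting covariant derivatives, the integrand reorganises into three families: (i) bilinear expressions in $\slashed{H}^i$ and its tangential derivative $\Nd\slashed{H}^i$, which dualise into a product of the form $\norm{\slashed{H}^i}_{H^{1/2}(\prSi)} \cdot \norm{\cdot}_{H^{-1/2}(\prSi)}$ via Definition~\ref{def:H-12} and~\eqref{est:H-12L2}; (ii) contributions carrying an explicit smallness factor $\norm{\th-\gd}_{H^{1/2}(\prSi)}$, $\norm{\phi^{-2}-1}_{H^{1/2}(\prSi)}$ or $\norm{\RRRic}_{L^2(\Si)}$ (arising respectively from the Weingarten term and from the curvature commutators); and (iii) terms still involving a transverse derivative $\nab_N(\nab x^i)$ on $\prSi$.

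The terms in family (iii) are handled by the trace estimate~\eqref{est:traceH-12N} applied with $F = \nab x^i$, which bounds $\norm{\Nd\nab_N(\nab x^i)}_{H^{-1/2}(\prSi)}$ by $\norm{\nab^3 x^i}_{L^2(\Si)} + \varep\,\norm{\nab^2 x^i}_{L^2(\Si)}$. Pairing this bound with $\norm{\slashed{H}^i}_{H^{1/2}(\prSi)}$ (and invoking the product estimate~\eqref{est:productLinftyH12} whenever one needs to control $H^{1/2}$-norms of products of boundary tensors) yields contributions of the form $\norm{\nab^3 x^i}_{L^2(\Si)}\,\norm{\slashed{H}^i}_{H^{1/2}(\prSi)}$ which are absorbed into the LHS by Remark~\ref{rem:absorption}, at the cost of a constant depending on $\AAA$.

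The main obstacle is the algebraic bookkeeping in the previous paragraph: after the Gauss--Weingarten substitutions and the integrations by parts on $\prSi$, one must check that every transverse-derivative factor is paired either with a genuinely small boundary quantity ($\th-\gd$, $\phi^{-2}-1$, or $\RRRic$) or with $\slashed{H}^i$, and that no uncontrolled $H^{3/2}(\prSi)$-norm of $\nab x^i$ survives on the RHS. The refined functional framework of Section~\ref{sec:func}, and in particular the dual norm $H^{-1/2}(\prSi)$, the trace estimate~\eqref{est:traceH-12N}, and the product estimate~\eqref{est:productLinftyH12}, is precisely what enables this isolation of $\slashed{H}^i$ and yields~\eqref{est:enercoordsxi2new} by absorption for $\varep$ sufficiently small depending on $\AAA$.
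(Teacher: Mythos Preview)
Your overall plan matches the paper's: apply Lemma~\ref{lem:highelliptic} with $U=\nab x^i$ and control the resulting boundary term $\mathcal{B}^i$ via a tangent/normal splitting, the trace estimate~\eqref{est:traceH-12N}, and absorption. However, your classification of the boundary integrand into families (i)--(iii) has a genuine gap. After the Gauss--Weingarten substitutions and integrations by parts, terms of the form
\[
  -\trth\,\bigl|\nab_N\nab_a x^i\bigr|^2 \quad\text{and}\quad -\th^{ab}\,\nab_N\nab_a x^i\,\nab_N\nab_b x^i
\]
appear, i.e.\ the tangent--normal component of the Hessian paired with \emph{itself} under an $O(1)$ coefficient $\th\approx\gd$. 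These fit none of your three families: they carry no explicit smallness factor, they are not paired with $\slashed{H}^i$, and bounding them through a trace would produce an $O(1)$ multiple of $\norm{\nab^3 x^i}_{L^2(\Si)}^2$, which cannot be absorbed. So the assertion that ``every transverse-derivative factor is paired either with a genuinely small boundary quantity or with $\slashed{H}^i$'' is not correct.

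The paper's resolution is that these self-paired terms carry a \emph{favourable sign}. Collecting them, the boundary computation actually yields
\[
  \mathcal{B}^i + 8\int_{\prSi}\bigl|\Nd\nab_N x^i\bigr|^2 \;\les_\AAA\; \bigl\Vert\slashed{H}^i\bigr\Vert_{H^{1/2}(\prSi)}\bigl\Vert\slashed{L}\bigr\Vert_{H^{-1/2}(\prSi)} + \bigl\Vert\slashed{H}^i\bigr\Vert^2_{H^{1/2}(\prSi)} + \varep\,\bigl\Vert\Nd\nab_N x^i\bigr\Vert^2_{H^{1/2}(\prSi)},
\]
with $\slashed{L}_{ab}:=\nab_a\nab_N\nab_b x^i$, and the positive integral on the left is simply discarded. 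This is the same boundary-convexity mechanism that drives Corollary~\ref{cor:prSiconnected} and Proposition~\ref{prop:extensionofNextra}; without recognising it, the bookkeeping you describe cannot close. Two minor points: the factor $\phi^{-2}-1$ plays no role in this lemma (it enters only when estimating $\slashed{H}^i$ itself in Lemma~\ref{lem:controlslashedH}), and the product estimate~\eqref{est:productLinftyH12} is likewise not needed here.
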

\begin{lemma}\label{lem:controlslashedH}
  Under the assumptions of Theorem~\ref{thm:main}, provided that $\varep$ is sufficiently small, we have
  \begin{align}
    \label{est:H12nabnabxi2}
    \big\Vert\slashed{H}^i\big\Vert_{H^{1/2}(\prSi)} & \les_\AAA \norm{\phi-1}_{H^{3/2}(\prSi)} + \norm{\th-\gd}_{H^{1/2}(\prSi)} + \norm{N(x^i)-x^i}_{H^{1/2}(\prSi)} + \varep\norm{\nab x^i}_{H^2(\Si)},
  \end{align}
  for all $1\leq i\leq 3$.
\end{lemma}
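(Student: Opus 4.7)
The plan is to derive a pointwise boundary identity for $\slashed{H}^i$ and then bound each term via the product estimate~\eqref{est:productLinftyH12}, with the placement of the $H^{3/2}$-factor dictated by the available a priori bounds.

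\textbf{Step 1 (boundary identity).} The Gauss decomposition $\nab_Y Z = \Nd_Y Z - \th(Y,Z) N$ for $Y, Z$ tangent to $\prSi$, inserted into $(\nab^2 x^i)(Y,Z) = Y(Zx^i) - (\nab_YZ)x^i$, gives
\[
(\nab^2 x^i)(Y,Z)\big|_{\prSi} = \Nd^2(x^i|_\prSi)(Y,Z) + \th(Y,Z)\,N(x^i).
\]
Substituting the conformal Hessian formula~\eqref{eq:confHessianxi} and adding/subtracting the Euclidean background values ($\phi^{-2}=1$, $\th=\gd$, $N(x^i)=x^i$), with the cancellation $-x^i\gd + \gd\,x^i = 0$, I arrive at the key decomposition
\[
\slashed{H}^i_{ab} = x^i(1-\phi^{-2})\gd_{ab} + (\th-\gd)_{ab}\,N(x^i) + \gd_{ab}(N(x^i)-x^i) - 2\,(\Nd\log\phi \otimesh \Nd x^i)_{ab}.
\]

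\textbf{Step 2 (auxiliary $H^{3/2}$ bounds).} From Proposition~\ref{prop:refinedBochnerest} and $|x^i|\leq 1$ I have $\norm{x^i}_{H^2(\Si)}\les_\AAA 1$, whence by trace $\norm{x^i}_{H^{3/2}(\prSi)} \les_\AAA 1$. Smoothness of $g$ together with Corollary~\ref{cor:decompperpH12} gives $\norm{\gd}_{H^{3/2}(\prSi)} \les_\AAA 1$. The $L^\infty$-smallness of $\phi-1$ in~\eqref{est:assconf} yields $\norm{1-\phi^{-2}}_{H^{3/2}(\prSi)} \les_\AAA \norm{\phi-1}_{H^{3/2}(\prSi)}$, and similarly $\norm{\Nd \log \phi}_{H^{1/2}(\prSi)} \les_\AAA \varep$.

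\textbf{Step 3 (term-by-term estimates via~\eqref{est:productLinftyH12}).} I then bound the four terms in $H^{1/2}(\prSi)$: (I) placing $(1-\phi^{-2})$ in the $H^{3/2}$-slot against $x^i\gd$ yields $\les_\AAA \norm{\phi-1}_{H^{3/2}(\prSi)}$; (II) splitting $N(x^i)=x^i + (N(x^i)-x^i)$, the main piece $(\th-\gd)x^i$ is bounded by placing $x^i$ in $H^{3/2}$, giving $\les_\AAA \norm{\th-\gd}_{H^{1/2}(\prSi)}$, while the quadratic remainder $(\th-\gd)(N(x^i)-x^i)$ is a product of two $\varep$-small $H^{1/2}$-factors, absorbed using $\varep$-smallness together with Young's inequality into the sum $\norm{\th-\gd}_{H^{1/2}(\prSi)} + \norm{N(x^i)-x^i}_{H^{1/2}(\prSi)}$; (III) placing $\gd$ in $H^{3/2}$ directly gives $\les_\AAA \norm{N(x^i)-x^i}_{H^{1/2}(\prSi)}$; (IV) for the last term I place $\Nd x^i$ in the $H^{3/2}$-slot, using
\[
\norm{\Nd x^i}_{H^{3/2}(\prSi)} \les_\AAA \norm{\nab x^i}_{H^2(\Si)}
\]
which I would deduce from the trace definition~\eqref{eq:defH12} (modulo lower-order commutators $\th\cdot\nab x^i$ relating $\Nd$ to $\nab$ on the boundary, controlled by the smallness of $\norm{\th-\gd}_{H^{1/2}}$ and~\eqref{est:refinedBochner}), yielding the term $\varep\norm{\nab x^i}_{H^2(\Si)}$.

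\textbf{Main obstacle.} The delicate point is term (IV): both $\Nd\log\phi$ and $\Nd x^i$ are only in $H^{1/2}(\prSi)$ a priori, each of norm $\les_\AAA \varep$ or $\les_\AAA 1$ respectively, and the product estimate~\eqref{est:productLinftyH12} forces one factor into $H^{3/2}(\prSi)$. The only viable choice is $\Nd x^i$, which via the trace characterisation is controlled only by the ambient $H^2(\Si)$-norm of $\nab x^i$ — the very quantity we do not yet have a closed bound on. This is exactly the reason the $\varep\norm{\nab x^i}_{H^2(\Si)}$ term appears on the right-hand side of the lemma: the factor $\varep$ will allow this term to be absorbed when the lemma is combined with Lemma~\ref{lem:ellipticnab3xi} in the proof of Proposition~\ref{prop:nabxi3Si}, closing the loop via the smallness assumption~\eqref{est:L2RicTh}.
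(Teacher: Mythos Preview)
Your decomposition and overall strategy match the paper's proof essentially term for term: the same boundary identity, the same product estimate~\eqref{est:productLinftyH12}, and the same placement of factors in the $H^{3/2}$ slot for terms (I), (III), and (IV).

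There is, however, a genuine gap in your handling of the quadratic remainder $(\th-\gd)(N(x^i)-x^i)$ in Step~3(II). You describe it as ``a product of two $\varep$-small $H^{1/2}$-factors'' and invoke Young's inequality, but the product estimate~\eqref{est:productLinftyH12} does \emph{not} give $H^{1/2}\times H^{1/2}\to H^{1/2}$; indeed $H^{1/2}(\prSi)$ is not an algebra on a $2$-manifold. Young's inequality cannot repair this, since the difficulty is not in absorbing a constant but in placing the product in the correct function space in the first place.

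The fix is precisely the idea you already use in (IV): put $N(x^i)-x^i$ in the $H^{3/2}$-slot of~\eqref{est:productLinftyH12} and observe (by the same trace reasoning you cite for $\Nd x^i$, via Corollary~\ref{cor:decompperpH12}) that
\[
\norm{N(x^i)-x^i}_{H^{3/2}(\prSi)} \les_\AAA \norm{\nab x^i}_{H^2(\Si)}.
\]
Combined with $\norm{\th-\gd}_{H^{1/2}(\prSi)}\les\varep$, this produces another $\varep\,\norm{\nab x^i}_{H^2(\Si)}$ contribution, which is already present on the right-hand side of~\eqref{est:H12nabnabxi2}. This is exactly how the paper handles that term.
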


\begin{proof}[Proof of Lemma~\ref{lem:ellipticnab3xi}]
  Taking $U=\nab x^i$ in the elliptic estimate~\eqref{est:highelliptic}, we get
  \begin{align}\label{est:nabxi31}
    \begin{aligned}
      \norm{\nab^3x^i}^2_{L^2(\Si)} & \les_\AAA \norm{\Delta\nab x^i}_{L^2(\Si)}^2 + \varep\norm{\nab^2x^i}^2_{L^{2}(\Si)} + \varep^2 \norm{\nab x^i}^2_{L^2(\Si)} + \BB,
    \end{aligned}
  \end{align}
  with
  \begin{align*}
    \BB & := \int_{\pr\Si}\le(\nab_a\nab_bx^i\nab^a\nab_N\nab^bx^i-\nab^a\nab_a\nab_b x^i\nab_N\nab^b x^i\ri) \\
        & \quad + \int_\prSi\le(\nab_a\nab_Nx^i\nab^a\nab_N\nab_Nx^i-\nab^a\nab_a\nab_N x^i\nab_N\nab_N x^i\ri) \\
        & = \int_{\pr\Si}\le(\nab_a\nab_bx^i\nab^a\nab_N\nab^bx^i-\nab^a\nab_a\nab_b x^i\nab_N\nab^b x^i\ri) \\
        & \quad + \int_\prSi\le(-\nab_a\nab_Nx^i\nab^a\nab^b\nab_bx^i+\nab^a\nab_a\nab_N x^i\nab^b\nab_b x^i\ri),
  \end{align*}
  where we used that $\Delta x^i=0$. In the sequel we fix $1\leq i\leq 3$ and we define $\slashed{L}$ to be the $\prSi$-tangent tensor such that $\slashed{L}_{ab}=\nab_a\nab_N\nab_bx^i$. To avoid confusion between indices, we write $\slashed{H} := \slashed{H}^i$. With these notations, the four terms composing $\BB$ rewrite as
  \begin{align*}
    \int_\prSi\nab_a\nab_bx^i\nab^a\nab_N\nab^bx^i & = \int_\prSi \slashed{H}\cdot\slashed{L},
  \end{align*}
  and
  \begin{align*}
    \begin{aligned}
      -\int_{\prSi}\nab^a\nab_a\nab_bx^i\nab_N\nab^bx^i & = -\int_{\prSi}\le(\Nd^a\slashed{H}_{ab} +  \trth \nab_N\nab_b x^i + \th^{a}_b\nab_a\nab_N x^i\ri)\nab_N\nab^bx^i \\
      & = \int_{\prSi}\slashed{H}_{ab}\big(\nab^a\nab_N\nab^bx^i+\th^{ac}\nab_c\nab^bx^i - \th^{ab}\nab_N \nab_Nx^i\big) \\
      & \quad\quad\quad - \trth |\Nd\nab_Nx^i|^2 -\th^{ab}\Nd_a\nab_Nx^i\Nd_b\nab_Nx^i\\
      & = \int_{\prSi}\slashed{H}_{ab}\slashed{L}^{ab}+\th^{ac}\slashed{H}_{ab}\slashed{H}^b_{c} + \th^{ab}\slashed{H}_{ab}\tr\slashed{H} \\
      & \quad\quad\quad - \trth |\Nd\nab_Nx^i|^2 -\th^{ab}\Nd_a\nab_Nx^i\Nd_b\nab_Nx^i,
    \end{aligned}
  \end{align*}
  and
  \begin{align*}
    \begin{aligned}
      \int_\prSi\nab^a\nab_a\nab_N x^i\nab^b\nab_b x^i & = \int_\prSi \tr\slashed{H}\tr\slashed{L},
    \end{aligned}
  \end{align*}
  and
  \begin{align*}
    \begin{aligned}
      -\int_{\prSi}\nab^a\nab^b\nab_bx^i\nab_a\nab_Nx^i & = \int_{\prSi}\le(-\Nd^a\tr\slashed{H} - 2\th^{ab}\nab_N\nab_bx^i\ri)\nab_a\nab_Nx^i \\
      & = \int_\prSi \tr\slashed{H}\le(\nab^a\nab_a\nab_Nx^i - \trth\nab_N\nab_Nx^i + \th^{ab}\nab_a\nab_bx^i\ri) \\
      & \quad\quad\quad - 2\th^{ab}\nab_N\nab_bx^i\nab_N\nab_ax^i \\
      & = \int_\prSi \tr\slashed{H}\tr\slashed{L} + \trth(\tr\slashed{H})^2 + \th^{ab}\slashed{H}_{ab}\tr\slashed{H} \\
      & \quad\quad\quad - 2\th^{ab}\nab_N\nab_bx^i\nab_N\nab_ax^i.
    \end{aligned}
  \end{align*}
  Thus, using the functional estimates~\eqref{est:L2H12pf} and the bound~\eqref{est:L2RicTh} on $\th-\gd$,
  \begin{align}\label{est:BBgoodfinal}
    \begin{aligned}
      \BB + 8 \int_\prSi |\Nd\nab_Nx^i|^2 & \les_\AAA \norm{\slashed{H}}_{H^{1/2}(\prSi)}\norm{\slashed{L}}_{H^{-1/2}(\prSi)} + \norm{\slashed{H}}^2_{H^{1/2}(\prSi)} + \varep \norm{\Nd\nab_Nx^i}_{H^{1/2}(\prSi)}^2.
    \end{aligned}
  \end{align}
  Applying the trace estimate~\eqref{est:traceH-12N} and Corollary~\ref{cor:decompperpH12}, we get
  \begin{align}\label{est:H-12slashedL}
    \Vert\slashed{L}\Vert_{H^{-1/2}(\prSi)} & \les_\AAA \norm{\Nd\nab_N\nab x^i}_{H^{-1/2}(\prSi)} \les_\AAA \norm{\nab^3x^i}_{L^2(\Si)} + \varep\norm{\nab^2x^i}_{L^2(\Si)},
  \end{align}
  and from~\eqref{eq:defH12} and Corollary~\ref{cor:decompperpH12}, we have
  \begin{align}\label{est:smallbitNdnabxiH12}
    \norm{\Nd\nab_Nx^i}_{H^{1/2}(\prSi)} & \les_\AAA \norm{\nab^3x^i}_{L^2(\Si)} + \norm{\nab^2x^i}_{L^2(\Si)}.
  \end{align}
  Plugging~\eqref{est:H-12slashedL} and~\eqref{est:smallbitNdnabxiH12} in the estimate~\eqref{est:BBgoodfinal} for $\BB$, and plugging this in~\eqref{est:nabxi31}, we get
  \begin{align*}
    \begin{aligned}
      \norm{\nab^3x^i}^2_{L^2(\Si)} & \les_\AAA \norm{\Delta\nab x^i}_{L^2(\Si)}^2 + \varep\norm{\nab^2x^i}^2_{L^{2}(\Si)} + \varep^2 \norm{\nab x^i}^2_{L^2(\Si)} + \norm{\slashed{H}}_{H^{1/2}(\prSi)}^2 \\
      & \quad + \norm{\slashed{H}}_{H^{1/2}(\prSi)}\le(\norm{\nab^3x^i}_{L^2(\Si)} + \varep\norm{\nab^2x^i}_{L^2(\Si)}\ri) \\
      & \quad + \varep \le(\norm{\nab^3x^i}_{L^2(\Si)} + \norm{\nab^2x^i}_{L^2(\Si)}\ri)^2,
    \end{aligned}
  \end{align*}
  which, together with an absorption argument, finishes the proof of the desired~\eqref{est:enercoordsxi2new}.  
\end{proof}
\begin{proof}[Proof of Lemma~\ref{lem:controlslashedH}]
  Using the conformal formula~\eqref{eq:confHessianxi}, we have
  \begin{align}\label{eq:nabnabxiNdNd}
    \begin{aligned}
    \slashed{H}_{ab} = \nab_a\nab_bx^i & = \Nd_a\Nd_bx^i + \th_{ab}N(x^i) \\
    & = x^i(1-\phi^{-2})\gd_{ab} - 2(\Nd(\log\phi)\otimesh\Nd x^i)_{ab} + x^i(\th_{ab}-\gd_{ab}) \\
    & \quad + (N(x^i)-x^i)\gd_{ab} + (N(x^i)-x^i)(\th_{ab}-\gd_{ab}).
    \end{aligned}
  \end{align}
  From~\eqref{eq:nabnabxiNdNd}, using the functional estimates~\eqref{est:L2H12pf}, \eqref{est:H12L2H-12bis} and~\eqref{est:productLinftyH12}, and the estimates~\eqref{est:L2RicTh}, \eqref{est:gdH12}, \eqref{est:assconf}, \eqref{est:maxppl} and~\eqref{est:refinedBochner}, \eqref{est:consrefinedBochner}, one has
  \begin{align*}
    \begin{aligned}
      \Vert\slashed{H}\Vert_{H^{1/2}(\prSi)} & \les_\AAA \norm{x^i\gd}_{L^\infty(\prSi)}\norm{\phi^{-2}}_{L^\infty(\prSi)}\norm{\phi+1}_{L^\infty(\prSi)}\norm{\phi-1}_{L^\infty(\prSi)} \\
      & \quad + \norm{x^i\gd}_{L^\infty(\prSi)}\norm{\phi^{-2}}_{L^\infty(\prSi)}\norm{\phi+1}_{L^\infty(\prSi)}\norm{\Nd(\phi-1)}_{L^2(\prSi)} \\
      & \quad + \norm{\Nd x^i}_{L^2(\prSi)} \norm{\phi^{-2}}_{L^\infty(\prSi)}\norm{\phi+1}_{L^\infty(\prSi)}\norm{\phi-1}_{L^\infty(\prSi)} \\
                                                  & \quad + \norm{\Nd\log\phi}_{H^{1/2}(\prSi)}\norm{\Nd x^i}_{H^{3/2}(\prSi)} + \norm{x^i}_{H^{3/2}(\prSi)}\norm{\th-\gd}_{H^{1/2}(\prSi)} \\
                                                  & \quad + \norm{N(x^i)-x^i}_{H^{1/2}(\prSi)}\norm{\gd}_{H^{3/2}(\prSi)} \\
                                                  & \quad + \norm{\th-\gd}_{H^{1/2}(\prSi)}\norm{N(x^i)-x^i}_{H^{3/2}(\prSi)} \\
                                                  & \les_\AAA \norm{\phi-1}_{H^{3/2}(\prSi)} + \norm{\th-\gd}_{H^{1/2}(\prSi)} + \norm{N(x^i)-x^i}_{H^{1/2}(\prSi)} \\
                                                  & \quad + \varep\norm{\Nd x^i}_{H^{3/2}(\prSi)} + \varep\norm{N(x^i)-x^i}_{H^{3/2}(\prSi)}.
    \end{aligned}
  \end{align*}
  Note that, to obtain the above, one used that~\eqref{est:gdH12} yields 
  \begin{align*}
    \norm{\gd}_{H^{3/2}(\prSi)} & = \norm{\gd}_{H^{1/2}(\prSi)} \les_\AAA 1. 
  \end{align*}
  Using~\eqref{est:L2RicTh}, Corollary~\ref{cor:decompperpH12} and~\eqref{est:productLinftyH12}, one can moreover easily show that
  \begin{align*}
    \norm{\Nd x^i}_{H^{3/2}(\prSi)} + \norm{N(x^i)-x^i}_{H^{3/2}(\prSi)} & \les_\AAA \norm{\nab x^i}_{H^2(\Si)}, 
  \end{align*}
  and this concludes the proof of the desired~\eqref{est:H12nabnabxi2}.
\end{proof}

\begin{proof}[Proof of Proposition~\ref{prop:nabxi3Si}]
  Using~\eqref{eq:DeltaRicbis} and the Sobolev estimate~\eqref{est:sobeucl1bis}, we have
  \begin{align*}
    \norm{\Delta \nab x^i}_{L^2(\Si)} \les \norm{\RRRic}_{L^2(\Si)} \norm{\nab x^i}_{L^\infty(\Si)} \les_\AAA \varep \norm{\nab x^i}_{H^2(\Si)}.
  \end{align*}
  From~\eqref{est:H12nabnabxi2}, together with the estimates~\eqref{est:L2RicTh}, \eqref{est:assconf}, and~\eqref{est:refinedBochner}, we get
  \begin{align*}
    \norm{\slashed{H}}_{H^{1/2}(\prSi)} & \les_\AAA \varep + \varep\norm{\nab x^i}_{H^2(\Si)}. 
  \end{align*}
  Plugging the above two estimates in~\eqref{est:enercoordsxi2new}, using Sobolev estimates~\eqref{est:sobeucl1bis}, \eqref{est:sobeucl2bis}, and using~\eqref{est:refinedBochner}, together with an absorption argument, we obtain the desired estimates~\eqref{est:nab3xiSi} and~\eqref{est:Linftynabxi}. For $\BBB$, we have
  \begin{align*}
    \nab_j \BBB_{kl} & = \sum_{i=1}^3\nab_k x^i \nab_j\nab_lx^i + \sum_{i=1}^3\nab_l x^i \nab_{j}\nab_kx^i.
  \end{align*}
  Hence, using~\eqref{est:nab3xiSi} and~\eqref{est:Linftynabxi}, we infer
  \begin{align*}
    \norm{\nab\BBB}_{L^2(\Si)} & \les \norm{\nab x^i}_{L^\infty(\Si)} \norm{\nab^2x^i}_{L^2(\Si)} \les_\AAA \varep,
  \end{align*}
  and similarly
  \begin{align*}
    \norm{\nab^2\BBB}_{L^2(\Si)} & \les \norm{\nab x^i}_{L^\infty(\Si)}\norm{\nab^3x^i}_{L^2(\Si)} + \norm{\nab^2x^i}^2_{L^4(\Si)} \les_\AAA \varep.
  \end{align*}
  The desired~\eqref{est:LinftyBBB} then follows from \eqref{est:refinedBochner} and the Sobolev estimate~\eqref{est:sobeucl2bis}.
\end{proof}

\begin{remark}\label{rem:higherorderlemmaell}
  To obtain higher order estimates for $\nab^{4}x^i$, \emph{etc.} one can apply the elliptic estimate~\eqref{est:highelliptic} to $U=\nab^2x^i$ (instead of $U=\nab x^i$ above). This involves higher order commutation of the Laplace equation, and, arguing as in the proof of Lemma~\ref{lem:ellipticnab3xi}, a higher order boundary term $\slashed{H}$. Following the proof of Lemma~\ref{lem:controlslashedH}, one can control that boundary term by higher order derivatives of $\th-\gd$ and $\phi-1$ (which is itself controlled by $\th-\gd$ and $\mathrm{R}$, see~\eqref{est:higherregconf}). Higher order commutations of Laplace equation involve higher order derivatives of the curvature tensor $\Riem$, and we claim that this yields the higher order estimates of Theorem~\ref{thm:main}.
\end{remark}

\section{Geometric conclusions}\label{sec:globdiffeo}
\begin{proposition}\label{prop:globdiffeo}
  Assume that there exists a conformal isomorphism $\Phi$ between $\pr\Si$ and $\SSS^2$ (see Definition~\ref{def:confiso}). Let $x^i$ be the associated functions on $\Si$ of Definition~\ref{def:coordsxi} and extend the definition of $\Phi$ by
  \begin{align*}
  \Phi~:~x\in\Si \mapsto \le(x^1(x),x^2(x),x^3(x)\ri) \in \mathbb{R}^3.
  \end{align*}
  Assume that
  \begin{align*}
    |\BBB| & = \le|\sum_{i=1}^3\nab x^i\otimes\nab x^i-g\ri| < 1,
  \end{align*}
  uniformly on $\Si$. Then, $\Phi$ is a global diffeomorphism from $\Si$ onto the unit 3-ball $\DDD^3\subset\mathbb{R}^3$.
\end{proposition}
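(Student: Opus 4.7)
The plan is to combine the local inverse function theorem, the maximum principle, and an elementary covering-space argument. First, I observe that the pull-back of the Euclidean metric of $\mathbb{R}^3$ by $\Phi$ is, by the very definition of $\BBB$,
\begin{align*}
  \Phi^\ast g_{\mathbb{R}^3} \;=\; \sum_{i=1}^{3} \nab x^i \otimes \nab x^i \;=\; g + \BBB,
\end{align*}
which, since the operator norm of $\BBB$ is dominated by the tensorial norm $|\BBB|<1$, is a positive-definite bilinear form on every tangent space of $\Si$. Hence $d\Phi_p$ has full rank at every $p\in\Si$ and $\Phi\colon\Si\to\mathbb{R}^3$ is a local diffeomorphism, including at boundary points.

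Second, I locate the image in $\DDD^3$. The function $u := \sum_{i=1}^3 (x^i)^2 - 1$ satisfies $\Delta u = 2\sum_i |\nab x^i|^2 \geq 0$ and, by~\eqref{eq:sumxi}, $u=0$ on $\prSi$. The weak maximum principle yields $u\leq 0$ on $\Si$, \emph{i.e.} $\Phi(\Si)\subset \DDD^3$. If $u$ attained $0$ at an interior point, the strong maximum principle would force $u\equiv 0$, hence $\nab x^i\equiv 0$, contradicting the local diffeomorphism property established above. Consequently $\Phi$ maps $\Si\setminus\prSi$ strictly into the open ball $\DDD^3\setminus\SSS^2$, and $\prSi$ into $\SSS^2$.

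The core step is to show that $\Phi|_{\Si\setminus\prSi}\colon \Si\setminus\prSi\to \DDD^3\setminus\SSS^2$ is a global diffeomorphism. The interior of a connected manifold with boundary is itself connected (via a standard collar-neighborhood argument), and $\DDD^3\setminus\SSS^2$ is simply connected. It therefore suffices to verify that the map is a proper local diffeomorphism, since any such map between connected manifolds is a covering, and a covering of a simply connected space by a connected space is a diffeomorphism. For properness, let $K\subset \DDD^3\setminus\SSS^2$ be compact; then $\mathrm{dist}(K,\SSS^2)>0$, and since $\Phi(\prSi)=\SSS^2$, the preimage $\Phi^{-1}(K)$ in $\Si$ is closed in $\Si$, disjoint from $\prSi$, and hence a compact subset of $\Si\setminus\prSi$.

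Finally, by Definition~\ref{def:coordsxi}, the restriction $\Phi|_{\prSi}$ coincides with the conformal isomorphism and is a diffeomorphism onto $\SSS^2$. Combined with the interior diffeomorphism obtained above, the full map $\Phi\colon\Si\to \DDD^3$ is a continuous bijection that is a local diffeomorphism of manifolds with boundary everywhere on $\Si$, hence a global diffeomorphism. I expect the main delicacy to be the properness/covering step: the clean separation $\Phi^{-1}(K)\cap\prSi=\emptyset$ needed there is precisely what the strong maximum principle guarantees, so the boundary control and the topology neatly dovetail.
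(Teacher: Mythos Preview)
Your proof is correct and follows the same three-step template as the paper (local diffeomorphism from $|\BBB|<1$, maximum principle to locate the image in $\DDD^3$, covering plus simple connectivity for global injectivity), with two cosmetic variations. First, to show $\Phi(\Si)\subset\DDD^3$ you use the subharmonic function $u=\sum_i(x^i)^2-1$ together with the weak/strong maximum principle, whereas the paper uses the \emph{linear} harmonic functions $\chi=|\Phi(p)|^{-1}\sum_i x^i(p)x^i$; both devices are one-line arguments. Second, for the covering step you pass to the open interiors and check properness (using the strong maximum principle to ensure $\Phi^{-1}(K)\cap\prSi=\emptyset$), while the paper stays with the compact manifolds-with-boundary, shows $\Phi(\Si)$ is clopen in $\DDD^3$ and invokes that a surjective local diffeomorphism between compact manifolds is a covering. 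Your route is the more ``textbook'' one and makes the role of the strong maximum principle explicit; the paper's route is marginally shorter but hides the same ingredient in the line $\Phi^{-1}(\pr\DDD^3)=\prSi$. There is no substantive gap in either direction.
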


The proof of Proposition~\ref{prop:globdiffeo} directly follows from the following three lemmas.
\begin{lemma}\label{lem:localinversefunction}
  Assume that $|\BBB|<1$ at $p\in\Si$. Then, $\Phi$ is a local diffeomorphism on a neighbourhood of $p$.
\end{lemma}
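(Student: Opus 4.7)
The strategy is simply to show that the Jacobian of $\Phi$ is invertible at $p$ and then invoke the smooth inverse function theorem (the smoothness of $\Phi$ is built into Definition~\ref{def:coordsxi}). All the work has already been packaged into the bound on $\BBB$, so only a linear-algebra observation remains.

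First, I would compute the differential. The map $\Phi:\Si\to\RRR^3$ is smooth and its differential at $p$ is the linear map $d\Phi_p:T_p\Si\to\RRR^3$ sending $v\mapsto (v(x^1),v(x^2),v(x^3))$. To apply the inverse function theorem it is enough to prove that $d\Phi_p$ is a linear isomorphism, and for this I will work with its Gram matrix.

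Fix an orthonormal basis $(e_k)_{k=1,2,3}$ of $(T_p\Si,g_p)$ and let $M\in\RRR^{3\times 3}$ be the matrix of $d\Phi_p$ in this basis and the canonical basis of $\RRR^3$, so that $M_{ik}=e_k(x^i)=(\nab x^i)_k|_p$. Then
\begin{align*}
  (M^\top M)_{kl} & = \sum_{i=1}^3 (\nab x^i)_k (\nab x^i)_l \Big|_p = \delta_{kl}+\BBB_{kl}|_p,
\end{align*}
by the very definition of $\BBB$ (Definition~\ref{def:Grammatrix}). Hence $M^\top M = I_3+\BBB|_p$ as a symmetric $3\times 3$ matrix.

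Now the tensor norm $|\BBB|_p$ is, in the orthonormal basis $(e_k)$, the Frobenius norm of $\BBB|_p$ viewed as a symmetric matrix, i.e.\ $|\BBB|_p^2=\sum_j \lambda_j^2$ where the $\lambda_j$ are its eigenvalues. The hypothesis $|\BBB|_p<1$ therefore forces $|\lambda_j|\leq|\BBB|_p<1$ for every $j$, so all eigenvalues $1+\lambda_j$ of $M^\top M$ are strictly positive. In particular $\det(M)^2=\det(M^\top M)>0$, so $d\Phi_p$ is invertible. The inverse function theorem then yields an open neighbourhood of $p$ on which $\Phi$ is a (smooth) diffeomorphism onto its image, proving the lemma. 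There is really no obstacle here beyond recognising that $|\BBB|<1$ (as a tensor norm) is exactly the condition needed to ensure that the symmetric matrix $I+\BBB|_p$ has strictly positive eigenvalues.
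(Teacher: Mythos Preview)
Your proof is correct and follows essentially the same route as the paper: both identify $\BBB|_p$ with the symmetric operator $M^\top M - I$ (the paper calls $M^\top M$ the endomorphism $\Theta$), use the spectral theorem to see that $|\BBB|<1$ forces all eigenvalues of $M^\top M$ to be strictly positive, and conclude that $d\Phi_p$ is invertible so the inverse function theorem applies. The only cosmetic difference is that you finish via $\det(M)^2=\det(M^\top M)>0$, while the paper observes that $\Theta$ being an isomorphism forces $\{(\nab x^i)^\sharp\}$ to span $T_p\Si$.
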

\begin{lemma}\label{lem:glob1}
  We have $\Phi\left(\Si\right) = \DDD^3$. Subsequently, $\Phi:\Si\to\DDD^3$ is a covering of $\DDD^3$.
\end{lemma}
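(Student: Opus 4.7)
The plan is to study the ``radial'' function $r^2 := \sum_{i=1}^3 (x^i)^2$ on $\Si$. Since each $x^i$ is harmonic (Definition~\ref{def:coordsxi}), a direct computation gives $\Delta r^2 = 2 \sum_{i=1}^3 |\nab x^i|^2 \geq 0$, so $r^2$ is subharmonic. By Lemma~\ref{lem:conformalidentities}, $r^2 = 1$ on $\prSi$, so the weak maximum principle already yields $r^2 \leq 1$ throughout $\Si$, hence $\Phi(\Si) \subseteq \DDD^3$. To refine this, I would use the hypothesis $|\BBB| < 1$ together with Lemma~\ref{lem:localinversefunction} to conclude that $\Phi$ is a local diffeomorphism at every point of $\Si$; in particular $\sum_i |\nab x^i|^2$ is nowhere zero, so $r^2$ is not identically constant, and the strong maximum principle then forces $r^2 < 1$ on the interior $\mathrm{int}(\Si)$. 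Thus $\Phi(\mathrm{int}(\Si)) \subseteq \mathrm{int}(\DDD^3)$, while $\Phi(\prSi) = \SSS^2$ since $\Phi|_{\prSi}$ is, by assumption, a diffeomorphism onto $\SSS^2$.

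For the surjectivity $\Phi(\Si) = \DDD^3$, I would prove the stronger statement $\Phi(\mathrm{int}(\Si)) = \mathrm{int}(\DDD^3)$ by a standard connectedness argument. The set $\Phi(\mathrm{int}(\Si))$ is open (by the local-diffeomorphism property) and nonempty. Its closedness inside $\mathrm{int}(\DDD^3)$ follows from compactness of $\Si$: if $y = \lim_{n} \Phi(p_n) \in \mathrm{int}(\DDD^3)$ with $p_n \in \mathrm{int}(\Si)$, then up to a subsequence $p_n \to p \in \Si$ with $\Phi(p) = y$, and since $\Phi(\prSi) = \SSS^2$ does not meet $\mathrm{int}(\DDD^3)$, the limit $p$ must lie in $\mathrm{int}(\Si)$. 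As $\mathrm{int}(\DDD^3)$ is connected, $\Phi(\mathrm{int}(\Si)) = \mathrm{int}(\DDD^3)$. Combined with $\Phi(\prSi) = \SSS^2$, this gives the desired $\Phi(\Si) = \DDD^3$.

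For the covering claim, I would invoke the classical fact that a proper surjective local diffeomorphism between connected manifolds is a covering. Properness is immediate from compactness of $\Si$, and the local-diffeomorphism property holds at every point of $\Si$ by Lemma~\ref{lem:localinversefunction}. The main subtlety I anticipate is verifying the local-diffeomorphism structure at boundary points in the manifold-with-boundary sense: this follows because $\Phi|_{\prSi}$ is a diffeomorphism onto $\SSS^2$ while $d\Phi$ has full rank on all of $\Si$ (guaranteed by $|\BBB| < 1$), so in suitable half-ball charts around any $p \in \prSi$ and $\Phi(p) \in \SSS^2$, the map $\Phi$ is a diffeomorphism of half-balls. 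With this in place, the covering conclusion is standard and the proof is complete.
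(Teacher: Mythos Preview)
Your proof is correct and follows essentially the same strategy as the paper: a maximum-principle argument to confine $\Phi(\Si)$ in $\DDD^3$ and to separate interior from boundary, then an open--closed connectedness argument for surjectivity, then the standard fact that a surjective local diffeomorphism from a compact manifold is a covering. The only cosmetic difference is that the paper applies the maximum principle to the \emph{linear} harmonic test functions $\chi = |\Phi(p)|^{-1}\sum_i x^i(p)\,x^i$, whereas you work directly with the subharmonic function $r^2 = \sum_i (x^i)^2$ and invoke the strong maximum principle; both routes give $\Phi^{-1}(\SSS^2)=\prSi$ and the rest is identical.
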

\begin{lemma}\label{lem:glob2}
  All covering maps $\Psi:X\to Y$, with $X$ connected and $Y$ simply connected, are globally injective. 
\end{lemma}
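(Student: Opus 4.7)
The plan is to invoke standard covering-space theory: a connected covering of a simply connected (reasonable) space must be a homeomorphism. Concretely, I would argue via uniqueness of path lifts. Suppose, for contradiction, that $\Psi(x_1)=\Psi(x_2)=y$ for some distinct $x_1,x_2\in X$. Since $\Psi$ is a local homeomorphism onto $Y$, which is locally path-connected (in the application of interest, $Y=\DDD^3$), $X$ itself is locally path-connected; combined with the connectedness hypothesis, this makes $X$ path-connected, so one can pick a path $\gamma:[0,1]\to X$ with $\gamma(0)=x_1$ and $\gamma(1)=x_2$.

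The next step is to push $\gamma$ down to a loop $\Psi\circ\gamma$ based at $y$ in $Y$ and exploit simple connectedness: there exists a null-homotopy $H:[0,1]^2\to Y$ of this loop fixing the basepoint $y$ throughout. The heart of the argument is then to apply the homotopy lifting property of the covering $\Psi$ to $H$, starting from the given lift $\gamma$ of $H(\cdot,0)$. This produces a unique continuous lift $\widetilde H:[0,1]^2\to X$ with $\widetilde H(\cdot,0)=\gamma$.

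The two endpoint paths $s\mapsto\widetilde H(0,s)$ and $s\mapsto\widetilde H(1,s)$ are continuous maps from $[0,1]$ into the discrete fiber $\Psi^{-1}(y)$, hence they must be constant, so $\widetilde H(0,s)=x_1$ and $\widetilde H(1,s)=x_2$ for every $s$. On the other hand, $\widetilde H(\cdot,1)$ is a lift of the constant loop at $y$, so its entire image lies in the discrete set $\Psi^{-1}(y)$; being the continuous image of $[0,1]$, it is connected, hence a single point. This forces $x_1=\widetilde H(0,1)=\widetilde H(1,1)=x_2$, the desired contradiction.

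There is essentially no obstacle here beyond checking that the hypotheses for the homotopy lifting property are met, which is automatic for covering maps. An equivalent and perhaps cleaner route would be to invoke the lifting criterion directly: since $\pi_1(Y)$ is trivial, the identity $\mathrm{id}_Y:Y\to Y$ admits a continuous lift $s:Y\to X$ through $\Psi$, so $\Psi\circ s=\mathrm{id}_Y$; then $s\circ\Psi$ and $\mathrm{id}_X$ are two continuous lifts of $\Psi:X\to Y$ through $\Psi$ which agree at one point, and by uniqueness of lifts on the connected space $X$ they coincide everywhere, giving $s\circ\Psi=\mathrm{id}_X$. Hence $\Psi$ is a bijection, and in particular injective.
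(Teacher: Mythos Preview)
Your proof is correct and follows essentially the same approach as the paper: both arguments take a path in $X$ joining two points in the same fiber, push it down to a loop in $Y$, contract that loop using simple connectedness, and lift the null-homotopy through the covering to conclude the two endpoints coincide. Your version is more carefully written than the paper's terse sketch (you justify path-connectedness of $X$ and spell out why the lifted endpoints stay constant), and the alternative via the lifting criterion is a nice equivalent formulation, but the underlying idea is the same.
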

\begin{proof}[Proof of Lemma~\ref{lem:localinversefunction}]
  By the spectral theorem for symmetric matrices, $|\BBB|^2 = (\la_1-1)^2 + (\la_2-1)^2 + (\la_3-1)^2$ where $\la_1, \la_2, \la_3$ are the eigenvalues of the map $\Theta:Z\in T_p\Si \mapsto \sum_{i=1}^3(\nab x^i)^\sharp\nab_Z x^i \in T_p\Si$. The condition $|\BBB|<1$ thus implies that $\la_1,\la_2,\la_3>0$ and that the map $\Theta$ is an isomorphism of $T_p\Si$. From the definition of $\Theta$, we have $\Theta(T_p\Si) \subset \mathrm{Span}\le((\nab x^1)^\sharp,(\nab x^2)^\sharp,(\nab x^3)^\sharp\ri)$. Thus $\le((\nab x^1)^\sharp,(\nab x^2)^\sharp,(\nab x^3)^\sharp\ri)$ is a basis of $T_p\Si$ and the conclusion of the lemma follows from the inverse function theorem.
\end{proof}
\begin{proof}[Proof of Lemma~\ref{lem:glob1}]
  Let first prove that $\Phi(\Si) \subset  \DDD^3$. We argue by contradiction and suppose that there exists $p\in\Si$ such that $|\Phi(p)|>1$.
  Let define the function $\chi$ on $\Si$ by
  \begin{align*}
    \chi := \frac{1}{|\Phi(p)|}\sum_{i=1}^n x^i(p) x^i.
  \end{align*}
  From the definitions of the $x^i$, the function $\chi$ is harmonic, $|\chi|\leq 1$ on $\pr\Si$ and $|\chi(p)| = |\Phi(p)|>1$, which contradicts the maximum principle.\\
  We then show that $\Phi(\Si) = \DDD^3$. Since $\Phi(\Si)$ is closed in $\DDD^3$ and $\DDD^3$ is connected, the result will follow provided that we can prove that $\Phi(\Si)$ is an open subset of $\DDD^3$. By definition of $\Phi$, we have on the one hand $\Phi(\pr\Si) = \pr\DDD^3$. Applying the maximum principle, we have on the other hand that $\Phi^{-1}\left(\pr\DDD^3\right) = \pr\Si$. Applying the inverse function theorem\footnote{To apply the inverse function theorem at the boundary, one extends the smooth function by Borel's lemma and then applies the classical inverse function theorem.} -- since $\Phi$ is a local diffeomorphism -- then ensures that $\Phi\left(\Si\right)$ is open in $\DDD^3$. By the inverse function theorem, any smooth surjective local diffeomorphism between compact manifolds defines a covering map.
\end{proof}
\begin{proof}[Proof of Lemma~\ref{lem:glob2}]
  This is standard algebraic topology. Let $p,q\in X$ such that $\Psi(p)=\Psi(q)=:x$. Since $X$ is connected, there exists a curve $\ga:[0,1]\to X$ such that $\ga(0)=p$, $\ga(1)=q$. Since $Y$ is simply connected, $\Psi\circ\ga$ can be contracted to the point $x$. Since $\Psi$ is a covering map, this implies that the curve $\ga$ is homotopic to a curve $\ga'$ which only takes values in $\Psi^{-1}(\{x\})$ and this homotopy has fixed ends $p$ and $q$. Since $\Psi$ is a covering map, $\ga'$ is constant, hence $p=q$.
\end{proof}

\section{Proof of Theorem~\ref{thm:main}}\label{sec:finalproof}
Under the assumptions of the theorem, we have constructed a conformal isomorphism $\Phi:p\in\prSi\mapsto(x^1(p),x^2(p),x^3(p))\in \SSS^2\subset\mathbb{R}^3$ (Proposition~\ref{lem:unifthm2}) which extends as a harmonic map $\Phi:p\in\Si \mapsto \le(x^1(p),x^2(p),x^3(p)\ri) \in \mathbb{R}^3$ (Definition~\ref{def:coordsxi}). From Proposition~\ref{lem:unifthm2}, Propositions~\ref{prop:refinedBochnerest} and~\ref{prop:nabxi3Si} we have obtained, in particular, that
\begin{align}\label{est:BBBconclusion}
  \norm{\BBB}_{L^\infty(\Si)} + \norm{\BBB}_{H^2(\Si)} & \les_\AAA \varep,
\end{align}
provided that $\varep$ is sufficiently small depending on $\AAA$ and where we recall that $\BBB = \sum_{i=1}^3\nab x^i\otimes\nab x^i -g$. Hence, by Proposition~\ref{prop:globdiffeo}, the map $\Phi$ is a global diffeomorphism from $\Si$ onto $\DDD^3$ and $\le(x^1,x^2,x^3\ri)$ are global coordinates on $\Si$. Let $\pr_1, \pr_2, \pr_3$ be the associated coordinate vectorfields. For all $1\leq i,j\leq 3$, one has
\begin{align}\label{eq:gijdeijBij}
  g_{ij}-\de_{ij} & = -\BBB_{ij}.
\end{align}
Hence, from~\eqref{est:BBBconclusion}, one has
\begin{align*}
  \le|g_{ii}-1\ri| \les |\BBB| g_{ii} \les_\AAA \varep g_{ii},
\end{align*}
thus,
\begin{align}\label{est:gii-1}
  \le|g_{ii} -1\ri| \les_\AAA \varep,
\end{align}
uniformly in $\Si$. Using~\eqref{est:gii-1} in~\eqref{eq:gijdeijBij}, we have
\begin{align*}
  \le|g_{ij}-\de_{ij}\ri| & \les_\AAA |\BBB| \sqrt{g_{ii}g_{jj}} \les_\AAA \varep,
\end{align*}
uniformly on $\Si$. Moreover, from~\eqref{eq:gijdeijBij}, we have schematically
\begin{align*}
  \nab (g_{ij}) & = (\nab \BBB)_{ij} + \BBB \cdot \nab(g_{kl}),  
\end{align*}
hence, using~\eqref{est:BBBconclusion} and~\eqref{est:gii-1} and an absorption argument, we have
\begin{align*}
  \max_{1\leq i,j\leq 3} \norm{\nab(g_{ij})}_{L^2(\Si)} \les_\AAA \norm{\nab \BBB}_{L^2(\Si)} \les_\AAA \varep, 
\end{align*}
and similarly
\begin{align*}
  \max_{1\leq i,j\leq 3} \norm{\nab^2(g_{ij})}_{L^2(\Si)} \les_\AAA \norm{\nab^2 \BBB}_{L^2(\Si)} \les_\AAA \varep,
\end{align*}
which proves the desired~\eqref{est:gijdeijthm}. We claim that the higher order estimates~\eqref{est:highergijdeijthm} follow similarly and this finishes the proof of Theorem~\ref{thm:main}.

\bibliographystyle{graf_GR_alpha}
\bibliography{graf_GR}

\begin{thebibliography}{BKKS22}

\bibitem[Aub03]{Aub03}
E.~Aubry, \emph{Vari{\'e}t{\'e}s de courbure de {{Ricci}} presque minor{\'e}e},
  thesis  (2003), 211 pp.

\bibitem[BKKS22]{Bra.Kaz.Khu.Ste22}
H.~L. Bray, D.~P. Kazaras, M.~A. Khuri, D.~L. Stern, \emph{Harmonic
  {{Functions}} and the {{Mass}} of 3-{{Dimensional Asymptotically Flat
  Riemannian Manifolds}}}, The Journal of Geometric Analysis 32 (2022), no.~6,
  184.

\bibitem[CG22]{Czi.Gra22}
S.~Czimek, O.~Graf, \emph{The spacelike-characteristic {{Cauchy}} problem of
  general relativity in low regularity}, Ann. PDE 8 (2022), no.~22, 69 pp.

\bibitem[CK93]{Chr.Kla93}
D.~Christodoulou, S.~Klainerman, \emph{The global nonlinear stability of the
  {{Minkowski}} space}, Princeton Univ. Press  (1993), x+483 pp.

\bibitem[Czi19]{Czi19}
S.~Czimek, \emph{Boundary harmonic coordinates on manifolds with boundary in
  low regularity}, Comm. Math. Phys. 371 (2019), no.~3, 1131--1177.

\bibitem[DS24]{Don.Son24}
C.~Dong, A.~Song, \emph{Stability of {{Euclidean}} 3-space for the positive
  mass theorem}, arXiv:2302.07414  (2024), 32 pp.

\bibitem[Eva98]{Eva98}
L.~C. Evans, \emph{Partial Differential Equations}, number v. 19 in Graduate
  Studies in Mathematics, American Mathematical Society, Providence, R.I
  (1998).

\bibitem[Gra20]{Gra20a}
O.~Graf, \emph{Global nonlinear stability of {{Minkowski}} space for
  spacelike-characteristic initial data}, arXiv:2010.12434  (2020), 246 pp.

\bibitem[GT01]{Gil.Tru01}
D.~Gilbarg, N.~S. Trudinger, \emph{Elliptic Partial Differential Equations of
  Second Order}, Classics in Mathematics, Springer, Berlin ; New York, 2nd ed.,
  rev. 3rd printing edition (2001).

\bibitem[Hui86]{Hui86}
G.~Huisken, \emph{Contracting convex hypersurfaces in {{Riemannian}} manifolds
  by their mean curvature}, Invent Math 84 (1986), no.~3, 463--480.

\bibitem[KRS15]{Kla.Rod.Sze15}
S.~Klainerman, I.~Rodnianski, J.~Szeftel, \emph{The bounded {{L2}} curvature
  conjecture}, Invent. Math. 202 (2015), no.~1, 91--216.

\bibitem[KS22]{Kla.Sze22}
S.~Klainerman, J.~Szeftel, \emph{Effective {{Results}} on {{Uniformization}}
  and {{Intrinsic GCM Spheres}} in {{Perturbations}} of {{Kerr}}}, Annals of
  PDE 8 (2022), no.~2, 18.

\bibitem[Lic58]{Lic58}
A.~Lichnerowicz, \emph{G{\'e}om{\'e}trie des groupes de transformations},
  Dunod, Paris  (1958).

\bibitem[Oba62]{Oba62}
M.~Obata, \emph{Certain conditions for a {{Riemannian}} manifold to be
  isometric with a sphere}, J. Math. Soc. Japan 14 (1962), no.~3.

\bibitem[Pet99]{Pet99b}
P.~Petersen, \emph{On eigenvalue pinching in positive {{Ricci}} curvature},
  Invent. math. 138 (1999), no.~1, 1--21.

\bibitem[Sha14]{Sha14}
A.~Shao, \emph{New tensorial estimates in {{Besov}} spaces for time-dependent
  (2 + 1)-dimensional problems}, J. Hyperbolic Differ. Equ 11 (2014), no.~04,
  821--908.

\end{thebibliography}
\end{document}